\theoremstyle{plain}
\newtheorem{theorem}{Theorem}[section]
\newtheorem{lemma}[theorem]{Lemma}
\newtheorem{corollary}[theorem]{Corollary}
\theoremstyle{definition}
\theoremstyle{remark}
\newtheorem{remark}[theorem]{Remark}
\newtheorem*{remark*}{Remark}
\numberwithin{equation}{section}
\newcommand\D{{\mathcal D}}
\newcommand\A{{\mathcal A}}
\newcommand\CC{{\mathbb C}}
\newcommand\RR{{\mathbb R}}
\newcommand\ZZ{{\mathbb Z}}
\newcommand\NN{{\mathbb N}}
\newcommand\PP{{\mathbb P}}
\newcommand\pp{\mbox{$\mathfrak{p}_{F}$}}
\newcommand\ps{\mbox{$\mathfrak{p}_{-c-F}$}}
\newcommand\dgr{\operatorname{dgr}}
\newcommand\Sh{\mbox{\Large $\mathfrak {s}$}}
\newcommand*\pFqskip{8mu}
\newcommand*\pFq{\begingroup
        \catcode`\,\active
        \def ,{\mskip\pFqskip\relax}%
        \dopFq
}
\def\dopFq#1#2#3#4#5{%
        {}_{#1}F_{#2}\biggl(\genfrac..{0pt}{}{#3}{#4};#5\biggr)%
        \endgroup
}
\title[$\D$-operators, orthogonal polynomials and difference equations]
{Using $\D$-operators to construct orthogonal polynomials satisfying  higher order difference or differential equations}
\author{Antonio J. Dur\'an}
\address{A. J. Dur\'an \\
Departamento de An\'{a}lisis Matem\'{a}tico \\
Universidad de Sevilla \\
Apdo (P. O. BOX) 1160\\
41080 Sevilla. Spain.}
\email{duran@us.es }
\thanks{Partially supported by MTM2012-36732-C03-03 (Ministerio de Economía y Competitividad),
FQM-262, FQM-4643, FQM-7276 (Junta de Andalucía) and Feder Funds (European
Union).}
\subjclass{33C45, 33E30, 42C05}
\keywords{Differential and difference operators; classical orthogonal polynomials; classical discrete orthogonal polynomials;
Charlier polynomials; Meixner polynomials; Krawtchouk polynomials; Hahn polynomials;
Laguerre polynomials; Jacobi polynomials; Krall polynomials.}
   \date{}
\begin{document}
   \maketitle

   \begin{abstract} We introduce the concept of $\D$-operators associated to a sequence of polynomials $(p_n)_n$ and an algebra $\A$ of operators acting in the linear space of polynomials. In this paper, we show that this concept is a powerful tool to generate families of
orthogonal polynomials which are eigenfunctions of a higher order difference or differential operator. Indeed, given a classical discrete family $(p_n)_n$ of orthogonal polynomials (Charlier, Meixner, Krawtchouk or Hahn), we form a new sequence of polynomials $(q_n)_n$ by considering a linear combination of two consecutive $p_n$: $q_n=p_n+\beta_np_{n-1}$, $\beta_n\in \RR$. Using the concept of $\D$-operator, we determine the structure of the sequence $(\beta_n)_n$ in order that the polynomials $(q_n)_n$ are common eigenfunctions of a higher order difference operator. In addition, we generate sequences $(\beta _n)_n$ for which the polynomials $(q_n)_n$ are also orthogonal with respect to a measure. The same approach is applied to the classical families of Laguerre and Jacobi polynomials.
\end{abstract}

\section{Introduction and results}
The issue of orthogonal polynomials $(p_n)_n$ which are common eigenfunctions of a difference operator
goes back at least for one century and a half. The first example was introduced by Chebyshev in 1858. Along the first decades of the XX century,
some other examples appeared and they are associated with the names of Charlier, Meixner, Krawtchouk and Hahn.
In 1941, O.T. Lancaster \cite{La} classified all
orthogonal polynomials on the real line satisfying second order difference equations of the form
$$
\sigma \Delta\nabla p_n+\tau \nabla p_n=\lambda _np_n,\quad n\ge 0,
$$
where $\sigma$ and $\tau$ are polynomials of degree at most $2$ and $1$, respectively (independent of $n$), and  $\Delta $ and $\nabla$
denote the
first order difference operators:
\begin{equation}\label{defdn}
\Delta (f)=f(x+1)-f(x),\quad \quad \nabla (f)=f(x)-f(x-1)
\end{equation}
(that is, the polynomials $(p_n)_n$ are common eigenfunctions of the second order difference operator
$\sigma \Delta\nabla +\tau \nabla$). If we consider orthogonal polynomials with respect to positive measures (on the real line), the only solutions
are the polynomials of Charlier, Meixner, Krawtchouk and Hahn.
For similarity with the families of Hermite, Laguerre and Jacobi,
they are usually called classical \textit{discrete} polynomials. The adjective \textit{discrete} means that each of these families is
orthogonal with respect to a \textit{discrete} measure. More precisely, the discrete orthogonalizing weights for these families of polynomials are
($N\in \NN \setminus \{0\}$):
\begin{align}\label{chwi}
&\mbox{Charlier} &\sum _{x\in \NN}&\frac{a^x}{x!}\delta _x, \quad 0<a;\\\label{mewi}
&\mbox{Meixner} &\sum _{x\in \NN}&\frac{a^x\Gamma (x+c)}{x!}\delta _x, \quad 0<a<1, 0<c;\\ \label{krwi}
&\mbox{Krawtchouk}  &\sum _{x= 0}^{N-1}&\frac{a^x}{\Gamma(N-x)x!}\delta _x, \quad 0<a;\\ \label{hawi}
&\mbox{Hahn} &\sum _{x= 0}^{N-1}&\frac{\Gamma (\alpha -x)\Gamma (x+c)}{\Gamma (N-x)x!}\delta_x, \quad \alpha >N-1, c>0 .
\end{align}

However, almost nothing was known about orthogonal polynomials which are common eigenfunctions of a higher order difference operator.
We will expand a finite difference operator using the basis formed by the shift operators $\Sh_{l}$, $l\in \ZZ$, defined by
\begin{equation}\label{defsh}
\Sh _l(f)=f(x+l)
\end{equation}
(this basis is equivalent to the basis formed by the powers of the difference operators $\Delta $ and $\nabla$ defined by (\ref{defdn})).
Hence we  consider finite difference operators of the form
\begin{equation}\label{doho}
D=\sum_{l=s}^rf_l\Sh _l, \quad s\le r, s,r\in \ZZ.
\end{equation}
If $f_r,f_s\not =0$, the order of $D$ is then $r-s$. We also say that $D$ has genre $(s,r)$.

Until very recently no examples of orthogonal polynomials which are common eigenfunctions of a higher order difference operator
of the form (\ref{doho}) were known,
if we except the \textit{discrete} classical families themselves.
What we know for the continuos or $q$ cases (differential or $q$-difference operators, respectively)
has seemed to be of little help because adding Dirac deltas to the classical discrete families
seems not to work (see the papers \cite{BH} and \cite{BK} by Bavinck, van Haeringen and Koekoek answering, in the negative,
a question posed by R. Askey in 1991).

The first examples of orthogonal polynomials $(p_n)_n$ which are common eigenfunctions of a difference operator of order bigger than $2$
have been introduced by the author
in \cite{du1}. We generate the orthogonalizing measures for these families of polynomials multiplying the classical discrete weights by
certain variants of the annihilator polynomial of a set of numbers. For a finite set of numbers $F$, this annihilator polynomial $\pp$ is
defined by
\begin{equation}\label{dp1i}
\pp(x)=\prod _{j\in F}(x-j).
\end{equation}
Using this  annihilator polynomial (and other of its variants), we have posed in \cite{du1} a number of conjectures. Two of them are the
following. Write $n_F$
for the number of elements of $F$.

\textbf{Conjecture 1.} Let $\rho $ be either the Meixner (\ref{mewi}) or the Hahn weight
(\ref{hawi}) (in this last case we assume $N$ big enough to avoid trivialities).
For a finite set $F$ of positive integers, consider the polynomial $\ps $ defined by
$$
\ps (x)=\prod_{j\in F}(x+c+j),
$$
where $c$ is the parameter in the Meixner and Hahn weights.
Then, the sequence (finite or infinite) of orthogonal polynomials with respect to the measure $\rho _{F,c}=\ps \rho$ are common eigenfunctions
of a difference operator $D$ of the form
$$
D=\sum _{l=-r}^{r}f_{l}(x)\Sh_l, \quad \mbox{where $r=\displaystyle \sum _{x\in F}x-\frac{n_F(n_F-1)}{2}+1.$}
$$

\textbf{Conjecture 2.}
Let $\rho $ be any of the classical discrete weights of Charlier,
Meixner, Krawtchouk or Hahn (in this last cases we assume $N$ big enough to avoid trivialities). For a finite set $F$ of positive integers,
assume that the measure $\rho _F=\pp \rho$ has associated a sequence $(p_{n})_n$ of orthogonal polynomials (finite or infinite: see
the preliminaries for more details), where $\pp$ is the polynomial defined by (\ref{dp1i}).
Then, these orthogonal polynomials $(p_{n})_n$ are common eigenfunctions
of a difference operator $D$ of the form
$$
D=\sum _{l=-r}^{r}f_{l}(x)\Sh_l, \quad \mbox{where $r=\displaystyle \sum _{x\in F}x-\frac{n_F(n_F-1)}{2}+1.$}
$$

\bigskip

The purpose of this paper is to introduce a method to generate
sequences of orthogonal polynomials which are eigenfunctions of a higher order difference operator. The method is based on the concept of
$\D$-operator associated to a sequence of polynomials
$(p_n)_n$ and an algebra $\A$ of operators acting in the linear space of polynomials.
Starting from one of the classical discrete families $(p_n)_n$ of orthogonal polynomials,
we form a new sequence of polynomials $(q_n)_n$ by considering a linear combination of two consecutive $p_n$:
$q_n=p_n+\beta_np_{n-1}$, $\beta_n\in \RR$. Using the concept of $\D$-operator associated to the family $(p_n)_n$ and the algebra of
difference operators with polynomial coefficients, we determine the structure of the sequence $(\beta_n)_n$ in order that the polynomials
$(q_n)_n$ are commons eigenfunctions of a higher order difference operator.
We will use this technique to prove the conjectures 1 and 2 above when $F$ is the
finite set formed by the first $k$ consecutive
positive integers $F=\{ 1,2,\cdots, k\}$, $k\ge 1$. This method also generates sequences of polynomials which are eigenfunctions of higher order differential (see the Appendix of this paper) or $q$-difference operators (see \cite{AD}). Hence, the concept of $\D$-operator
provides an unified approach to construct orthogonal polynomials which are eigenfunctions for each one of these
three different types of operators (differential, difference or $q$-difference).

The content of this paper is as follows.

In Section 3, we consider the key tool of our method:
$\D$-operators associated to
a sequence of polynomials $(p_n)_n$ ($p_n$ of degree $n$) and an algebra of operators $\A$ acting in the linear space of polynomials.
We have different types of $\D$-operators. The simplest $\D$-operator is defined from
an arbitrary sequence of numbers $(\varepsilon_n)_n$ as follows: we consider first the operator $\zeta :\PP \to \PP $ defined by linearity
from $\zeta (p_n)=\varepsilon _np_{n-1}$. Using that $\dgr(\zeta (p))<\dgr (p)$ for all polynomial $p$, we define
the operator  $\D$ by
\begin{align}\nonumber
\D &:\PP \to \PP \\\label{defToi}
\D (p)&=\sum _{j=1}^\infty (-1)^{j+1}\zeta^j(p).
\end{align}
We then say that $\D$ is a $\D$-operator for the sequence $(p_n)_n$ and the algebra $\A$ if $\D \in \A$.

For the examples in this paper we consider two algebras $\A$. When the sequence of polynomials $(p_n)_n$ is one of the classical discrete families (Sections 4, 5, 6 and 7) i.e. Charlier, Meixner, Krawtchouk or Hahn polynomials,
we consider the algebra $\A$ formed by all finite order difference
operators $\sum_{j=s}^rf_j\Sh _j$ where $f_j$ is a polynomial, $j=s,\cdots, r$:
\begin{equation}\label{algdiff}
\A =\left\{ \sum_{j=s}^rf_j\Sh _j : f_j\in \PP, j=s,\cdots ,r, s\le r \right\}.
\end{equation}
When the sequence of polynomials $(p_n)_n$ is either the Laguerre or Jacobi polynomials (Appendix), we consider the algebra $\A$ formed by all finite order differential
operators $\sum_{j=0}^kf_j(d/dx)^j$ where $f_j$ is a polynomial with degree at most $j$, $j=0,\cdots, k$:
\begin{equation}\label{algdiff1}
\A =\left\{ \sum_{j=0}^kf_j\left(\frac{d}{dx}\right)^j : f_j\in \PP, \deg(f_j)\le j, j=0,..,k, k\in \NN \right\}.
\end{equation}

Here it is an example of $\D$-operator.
Consider the Charlier polynomials $(c_{n}^{a})_n$ (see (\ref{defchp}) below), $a\not =0$, and the algebra $\A$ given by (\ref{algdiff}).
Then the sequence $\varepsilon_n=1$, $n\ge 0$, generates a $\D$-operator for
the Charlier polynomials and this algebra $\A$; indeed, the operator $\D$ defined above (\ref{defToi}) from
the sequence $\varepsilon_n=1$, $n\ge 0$, and the Charlier polynomials is equal to the Nabla operator (see (\ref{defdn})): $\D =\nabla\in \A$
(see Lemma \ref{lTch}).
In this paper, we will also display two $\D$-operators for Meixner and Krawtchouk polynomials (see Lemmas \ref{lTme}, \ref{lTkr}), four
$\D$-operators for Hahn polynomials (see Lemma \ref{lTha}), one for Laguerre polynomials (see Lemma \ref{lTlag}) and two for Jacobi polynomials (see Lemma \ref{lTjac}).
For $\D$-operators associated to $q$-classical families of orthogonal polynomials see \cite{AD}.

Roughly speaking, our method consists in the following. In addition to the polynomials $(p_n)_n$ and the algebra $\A$, we
also have an operator $D_p\in \A$ for which the polynomials $(p_n)_n$ are eigenfunctions.
We denote $(\theta_n)_n$ the corresponding sequence of eigenvalues: $D_p(p_n)=\theta_np_n$. To simplify
we consider here in the Introduction the case when $\theta_n$ is linear in $n$.
Assume also that we have identified a sequence $(\varepsilon_n)_n$ which defines
a $\D$-operator for the sequence of polynomials $(p_n)_n$ and the algebra $\A$.
We can then produce a large class
of  polynomial families $(q_n)_n$ which are eigenfunctions of operators in the algebra $\A$ proceeding as follows.
Take an arbitrary polynomial $Q$ (such that $Q(\theta _n)\not =0$, $n\ge 0$), and define
the sequence of numbers $(\beta_n)_{n\ge 1}$  by
$$
\beta_n=\varepsilon_n\frac{Q(\theta_n)}{Q(\theta_{n-1})}.
$$
Define next a new sequence of polynomials $(q_n)_n$ by $q_0=1$ and
\begin{align}\label{qnint}
q_n=p_n+\beta_np_{n-1}, \quad n\ge 1.
\end{align}
It turns out that the sequence of polynomials $(q_n)_n$
are eigenfunctions for the operator $D_q=P(D_p)+\D Q(D_p)$, where the polynomial $P$ satisfies $P(\theta_n)-P(\theta_{n-1})=Q(\theta_{n-1})$
(since $(\theta_n)_n$
is lineal in $n$, $P$ always exists).
Since $\D$ is a $\D$-operator, $\D\in \A$ and then $D_q\in \A$ as well (see Section 3 for more details).

Only for a convenient choice of the polynomial $Q$, the polynomials $(q_n)_n$ (\ref{qnint}) are also orthogonal with respect to a measure. When the
sequence $(p_n)_n$ is any of the
classical discrete families of orthogonal polynomials, a very nice symmetry between the family $(p_n)_n$ and the polynomial $Q$ appear.
Indeed, if $p_n$, $n\ge 0$, are either the Charlier, Meixner or Krawtchouk polynomials, then
when we choose the polynomial $Q$ in the same family (but with different parameters) the polynomials $(q_n)_n$ turn out to be orthogonal as well,
while for
the Hahn polynomials, the polynomial $Q$ has to be chosen in the dual family. For other choices of the polynomial $Q$, the polynomials $(q_n)_n$
seem not to be
orthogonal with respect to any measure. In a subsequent paper, we will show that they enjoy certain orthogonality properties
which imply that the family $(q_n)_n$ has to satisfy certain  recurrence relation of order at most $2\deg(Q)+3$.
In the terminology introduced by Duistermaat and Gr\"unbaum (\cite{DG}; see also \cite{GrH1}, \cite{GrH3}), we can say that our method produces
bispectral polynomials: that is,  the family $(q_n(x))_n$ are common eigenfunctions of two  difference
operators, one in the continuous parameter $x$ and the other in the discrete parameter $n$.

Sections 4, 5, 6 and 7 are devoted to show how our method works by applying it to the families of classical discrete polynomials.
Hence, this paper together with  the forthcoming one (\cite{AD}) show
that the concept of $\D$-operator provides an unified approach to construct orthogonal polynomial
which are eigenfunctions of higher order differential, difference or $q$-difference operators, respectively.

Here it is an example of the kind of results we can get using the concept of $\D$-operator.

\begin{theorem}\label{th5.1ch} Let $P_1$ be a polynomial of degree $k+1$, $k\ge 1$,
and write $P_2(x)=P_1(x-1)-P_1(x)$ (so that $P_2$ is a polynomial of degree $k$). We assume that $P_2(-n)\not =0$, $n\ge 0$, and
define the sequences of numbers
\begin{align}\nonumber
\gamma_{n+1}&=P_2(-n),\quad n\ge 0,\\\nonumber
\lambda_n&=P_1(-n),\quad n\ge 0,\\\label{becchar}
\beta_n&=\frac{\gamma_{n+1}}{\gamma_n}\quad n\ge 1,
\end{align}
and the sequence of polynomials $q_0=1$, and for $n\ge 1$
\begin{equation}\label{qnno}
q_n(x)=c_{n}^{a}(x)+\beta_nc_{n-1}^{a}(x),
\end{equation}
where $c_{n}^{a}$, $n\ge 0, a\not =0$, are the Charlier polynomials (see (\ref{defchp}) below).
Consider the second order difference operator $D_{a}$ (\ref{sodech}) with respect to which
the Charlier polynomials are eigenfunctions. Write finally $D$ for the difference operator of order $2k+2$ and
genre $(-k-1,k+1)$
$$
D=P_1(D_{a})+\nabla P_2(D_{a}).
$$
Then $D(q_n)=\lambda_nq_n$, $n\ge 0$.
\end{theorem}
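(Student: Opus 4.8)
The plan is to read Theorem \ref{th5.1ch} as the Charlier instance of the general construction sketched in the Introduction, with $P=P_1$, $Q=P_2$, $\varepsilon_n=1$ and $\D=\nabla$, and then to verify the eigenvalue equation by a direct computation. First I would record the two facts that drive everything. The Charlier polynomials are eigenfunctions of $D_a$ with eigenvalue $\theta_n=-n$ (from the second order difference equation (\ref{sodech})), so $R(D_a)(c_n^a)=R(-n)\,c_n^a$ for every polynomial $R$; in particular $P_1(D_a)(c_n^a)=\lambda_n c_n^a$ and $P_2(D_a)(c_n^a)=\gamma_{n+1}c_n^a$. And, by Lemma \ref{lTch}, $\nabla$ is the $\D$-operator attached to the constant sequence $\varepsilon_n=1$, i.e. $\nabla=\D$ with $\zeta(c_n^a)=c_{n-1}^a$.

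Next I would apply $D=P_1(D_a)+\nabla P_2(D_a)$ term by term to $q_n=c_n^a+\beta_nc_{n-1}^a$ (the case $n=0$ being immediate, since $q_0=c_0^a$ and $\nabla(c_0^a)=0$). The first summand is read off from the eigenvalue property:
\[
P_1(D_a)(q_n)=\lambda_n c_n^a+\beta_n P_1(-n+1)\,c_{n-1}^a.
\]
For the second summand, pulling $P_2(D_a)$ through the two Charlier polynomials gives $\gamma_{n+1}\nabla(c_n^a)+\beta_n\gamma_n\nabla(c_{n-1}^a)$; since $\beta_n=\gamma_{n+1}/\gamma_n$, the factor $\beta_n\gamma_n$ collapses to $\gamma_{n+1}$, so this equals $\gamma_{n+1}\big(\nabla(c_n^a)+\nabla(c_{n-1}^a)\big)$.

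The crux is the telescoping identity $\nabla(c_n^a)+\nabla(c_{n-1}^a)=c_{n-1}^a$. I would derive it straight from the structure of the $\D$-operator rather than from an explicit expansion of $\nabla$: the defining series $\D=\sum_{j\ge1}(-1)^{j+1}\zeta^j$ satisfies $(I+\zeta)\D=\zeta$, and since $\zeta$ and $\D$ commute and $\zeta(c_n^a)=c_{n-1}^a$, one gets $\D(c_n^a)+\D(c_{n-1}^a)=\D(c_n^a)+\zeta\D(c_n^a)=\zeta(c_n^a)=c_{n-1}^a$. This is the key point: it guarantees that all the lower-degree contributions $c_{n-2}^a,c_{n-3}^a,\dots$ produced by $\nabla$ cancel, so that $\nabla P_2(D_a)(q_n)=\gamma_{n+1}c_{n-1}^a$ is a clean multiple of $c_{n-1}^a$.

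Finally I would assemble the two pieces, obtaining $D(q_n)=\lambda_n c_n^a+\big(\beta_n P_1(-n+1)+\gamma_{n+1}\big)c_{n-1}^a$, and compare with $\lambda_n q_n=\lambda_n c_n^a+\lambda_n\beta_n c_{n-1}^a$. The $c_n^a$ coefficients already agree, and the $c_{n-1}^a$ coefficients match precisely when $\gamma_{n+1}=\beta_n\big(P_1(-n)-P_1(-n+1)\big)=\beta_n\gamma_n$, using $P_1(-n)-P_1(-n+1)=P_2(-n+1)=\gamma_n$ directly from $P_2(x)=P_1(x-1)-P_1(x)$; this is exactly the definition of $\beta_n$, so the proof closes. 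The main obstacle is the telescoping identity together with the bookkeeping that the unwanted low-order terms vanish; once that is in place, the matching of the remaining coefficients is routine.
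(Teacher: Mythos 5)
Your argument for the eigenvalue equation is correct, and it is in substance the same computation the paper performs, only organized differently. The paper proves Theorem \ref{th5.1ch} by invoking the general Lemma \ref{fl1v} (whose proof expands $\D(p_n)=\sum_{j\ge1}(-1)^{j+1}\zeta^j(p_n)$ and checks coefficient by coefficient that the terms in $p_{n-2},p_{n-3},\dots$ cancel, via the quantities $a_{n,j}$); you instead specialize to Charlier from the start and package the entire cancellation into the single operator identity $(I+\zeta)\D=\zeta$, which immediately yields $\nabla(c_n^a)+\nabla(c_{n-1}^a)=c_{n-1}^a$. That identity is valid (the series is finite on each polynomial, and $\D$ commutes with $\zeta$ as a power series in it), and it is arguably a cleaner way to see why only the $c_{n-1}^a$ term survives. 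The remaining bookkeeping --- matching the $c_{n-1}^a$ coefficients via $P_1(-n)-P_1(-n+1)=P_2(-n+1)=\gamma_n$ and the definition $\beta_n=\gamma_{n+1}/\gamma_n$ --- is exactly what the paper's $\beta_n\lambda_{n-1}+\gamma_{n+1}\varepsilon_n=\lambda_n\beta_n$ amounts to. What your approach buys is a self-contained proof of the Charlier case with the cancellation mechanism made transparent; what the paper's route buys is that the same Lemma \ref{fl1v} is then reused verbatim for Meixner, Krawtchouk, Laguerre, etc.

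One point you do not address: the theorem also asserts that $D$ has order $2k+2$ and genre $(-k-1,k+1)$, i.e., that the coefficients of $\Sh_{-k-1}$ and $\Sh_{k+1}$ in $D$ are nonzero. This is not automatic from degree counting alone, since cancellation between $P_1(D_a)$ and $\nabla P_2(D_a)$ could in principle occur at the extremes; the paper computes these extreme coefficients explicitly as $f_{-1}(x-1)\cdots f_{-1}(x-k)\bigl(u_1f_{-1}(x)-u_2\bigr)$ and $u_1f_1(x)\cdots f_1(x+k)$ with $f_{-1}=x$, $f_1=a$, and observes they do not vanish. You should add this (short) verification to close the statement completely.
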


When we choose the polynomial $P_2$ in an appropriate way, the polynomials $(q_n)_n$ (\ref{qnno}) are also orthogonal with respect to
a measure. As we have pointed out above, there is a very nice symmetry in the choice of $P_2$.
 Indeed, for the Charlier case, when we choose $P_2(x)=c_{k}^{-a}(x-1)$ and assume $c_{k}^{-a}(-n)\not=0$, $n\ge -1$,
the polynomials $(q_n)_n$ (\ref{qnno}) are then orthogonal with respect
to the measure
$$
\tilde \rho_{k,a}=(-1)^kk!\delta_{-k-1}+\sum_{x=0}^\infty \frac{a^{x+k+1}}{(x+k+1)x!}\delta _x, \quad a\not =0.
$$
This provides a proof for our Conjecture 2 above for the Charlier polynomials and the finite set $F=\{ 1, 2, \cdots ,k\}$. Indeed,
the measure $\pp \rho_a$,
where $\pp$ is the polynomial (\ref{dp1i}) and $\rho _a$ is the Charlier measure (\ref{chwi}), is  equal to the
measure $\tilde \rho _{k,a}$ above shifted by $k+1$: $\pp \rho _a=\tilde\rho_{k,a}(x-k-1)$.

We have added an Appendix devoted to orthogonal polynomials $(p_n)_n$ which are eigenfunctions for a higher order differential operator of the form
\begin{equation}\label{aode}
\sum _{l=1}^kf_lp_n^{(l)}=\lambda_n p_n, \quad n\ge 0,
\end{equation}
where $f_l$, $l=1,\cdots , k$, are polynomials independent of $n$ (so that $(p_n)_n$ are common eigenfunctions
of the differential operator $D=\sum _{l=1}^kf_l\left(\frac{d}{dx}\right)^{l}$).
This issue goes back to  the late thirties, when H.L. Krall
classified all families of orthogonal polynomials which satisfy a fourth order differential equation of the form
(\ref{aode}) (see \cite{Kr1,Kr2}). Besides the classical families of Hermite, Laguerre
and Jacobi (satisfying second order differential equations), he  found three other families of orthogonal polynomials
satisfying a fourth order differential equation of this type.
These polynomials are orthogonal with respect to a positive measure which
consists of some classical weights together with a Dirac delta at the end point(s) of the interval of orthogonality:
$\chi _{[-1,1]}+M(\delta _{-1}+\delta_1)$ (Legendre type), $e^{-x}+M\delta _{0}$ (Laguerre type) and $(1-x)^\alpha \chi _{[0,1]}+M\delta _{0}$
(Jacobi type).

Since the eighties a lot of effort has been devoted to this issue.
T. H. Koornwinder (\cite{Ko}) studied  in 1984 orthogonal polynomials with respect to the Jacobi weight  together with two Dirac deltas at $\pm 1$
\begin{equation}\label{jacobid}
(1-x)^\alpha (1+x)^\beta +M\delta_{-1}+N\delta_1,\quad \alpha, \beta >-1.
\end{equation}
As a limit case he found orthogonal polynomials  with respect to the  Laguerre weight together with a Dirac delta at $0$
\begin{equation}\label{laguerred}
x^\alpha e^{-x}+M\delta_{0},\quad \alpha >-1.
\end{equation}
Some years earlier,  L.L. Littlejohn (\cite{L1,L2}) had discovered new families satisfying sixth order differential equations. They correspond
with the case $\alpha =\beta =0$ in (\ref{jacobid}) (Krall polynomials; discovered by H.L. Krall but never published)
and $\alpha =1, 2$ in (\ref{laguerred}) (Littlejohn polynomials).
A. M. Krall and L. L. Littlejohn did some work on the classification of higher order differential equations of the form (\ref{aode})
having orthogonal polynomial solutions (for several values of $k\ge 6$) (see \cite{KrL,L3}).
J. Koekoek and R. Koekoek showed in 1991 that orthogonal polynomials with respect to the weights (\ref{laguerred})
satisfy infinite order differential equations, except for nonnegative integer values of $\alpha$ for which the order reduce to
$2\alpha + 4$ (\cite{koekoe}).
Similar results were proof by R. Koekoek for the Koornwinder polynomials orthogonal with respect to the weight (\ref{jacobid}) for $\alpha =\beta$ and $M=N$ (\cite{koe}), and
later on (again with J. Koekoek) for the case $M,N>0$ (\cite{koekoe2}). K.H. Kwon and D.W. Lee proved that when we consider orthogonal polynomials
 with respect to a positive and compactly supported measure satisfying higher order differential equations, they have to be orthogonal with respect
 to some of the measures (\ref{jacobid}) (\cite{KL}, see also \cite{KLY}, \cite{KLY2}). A. Zhedanov proposed a technique to construct Krall's polynomials and found a $(2\alpha +4)$-th order differential equation for
the orthogonal polynomials with respect to (\ref{jacobid}) when $\alpha$ is a nonnegative integer and $M=0$ (\cite{Zh}).

F.A. Grünbaum and L. Haine (et al.) proved that polynomials satisfying fourth or higher order differential equations can be generated by applying Darboux transforms to certain instances of the classical polynomials (see \cite{GrH1,GrHH,GrY,H}). By modifying the Darboux process, they also introduced new families of polynomials satisfying fourth order
differential equations of the form (\ref{aode}) which also satisfy five term recurrence relations, instead of the three term recurrence relation which characterizes
the orthogonality with respect to a measure (see \cite{GrH3}). P. Iliev refined this approach and characterized the algebras of differential operators associated to these families of polynomials (see \cite{Plamen1}, \cite{Plamen2}).

In the Appendix we construct one $\D$-operator for Laguerre polynomials and two for Jacobi polynomials, and apply our method to recover some of the results commented in the previous paragraphs.

\section{Preliminaries}
For a linear operator $D:\PP \to \PP$ and a polynomial $P(x)=\sum _{j=0}^ka_jx^j$, the operator $P(D)$ is defined in the usual way
$P(D)=\sum _{j=0}^ka_jD^j$.

As usual $(a)_j$ will denote the Pochhammer symbol defined by
$$
(a)_0=1,\quad \quad (a)_j=a(a+1)\cdots (a+j-1),\quad \mbox{for $j\ge 1$, $a\in \CC$}.
$$

Let $\mu$ be a moment functional on the real line, that is, a linear mapping $\mu :\PP \to \RR$.
The $n$-th moment of $\mu $ is defined by $\mu_n=\langle \mu, x^n\rangle $.
It is well-known that any moment functional on the real line can be represented by integrating with respect to a Borel measure
(positive or not) on the real line
(this representation is not unique \cite{du0}).
If we also denote this measure by $\mu$, we have $\langle \mu,p\rangle=\int p(x)d\mu(x)$ for all polynomial $p\in \PP$. Taking this into account,
we will conveniently use along this paper one or other terminology (orthogonality with respect to a moment functional or with
respect to a measure).

We can  associate to the moment functional $\mu$ a bilinear form defined in the linear space of real polynomials:
\begin{equation}\label{bf}
B(p,q)=\langle \mu, p(x)q(x)\rangle.
\end{equation}
If the moment funcional $\mu $ is represented by a discrete measure supported in a countable set $X$ of real numbers, namely
$\mu =\sum _{x\in X}\mu(x)\delta _x,$
the bilinear form above is equal to
$B(p,q)=\sum_{x\in X} p(x)q(x)\mu(x).$

Let $K$ be a positive integer.
If
\begin{equation}\label{defth}
\Theta _n=\det \begin{pmatrix} \mu _0 & \cdots & \mu _n\\ \vdots & \ddots & \vdots \\ \mu _n & \cdots &\mu_{2n} \end{pmatrix}\not =0
\end{equation}
for $n=0,1,\cdots ,K$, we can associate to the moment functional $\mu$ a sequence of monic orthogonal polynomials $(p_n)_{n=0}^K$, $p_n$ of
degree $n$, using the formula
\begin{equation}\label{fdpo}
p_n(x)=\frac{1}{\Theta _{n-1}}\det \begin{pmatrix} \mu _0 & \mu _1&\cdots & \mu _n\\ \vdots &\vdots & \ddots & \vdots \\
\mu _{n-1} &\mu_{n}& \cdots &\mu_{2n-1}
\\1 & x&\cdots &x^n \end{pmatrix},\quad 1\le n\le K, \quad p_0=1.
\end{equation}
The polynomials $p_n$, $0\le n\le K$, satisfy that $B(p_n,x^m)=0$, for $0\le m<n$ and $B(p_n,x^n) \not =0$. Conversely, any polynomial
$q$ of degree $n$, $0\le n\le N$, satisfying $B(q,x^m)=0$, for $0\le m<n$, is equal to $p_n$ up to a multiplicative constant: $q(x)=ap_n(x)$.

The moment funcional $\mu $ can be represented by a positive measure with infinitely many points in its support if and only if $\Theta _n>0$,
for $n=0,1,\cdots $, and then the
associated bilinear form (\ref{bf}) is an inner product ($B(p,p) >0$ for all polynomial $p\not =0$).

Favard's Theorem establishes that a sequence $(p_n)_n$ of polynomials, $p_n$ of degree $n$, is orthogonal with respect to a moment functional if
and only if it satisfies
a three term recurrence relation of the form ($p_{-1}=0$)
$$
xp_n(x)=a_np_{n+1}(x)+b_np_n(x)+c_np_{n-1}(x), \quad n\ge 0,
$$
where $(a_n)_n$, $(b_n)_n$ and $(c_n)_n$ are sequences of real numbers with $a_nc_n\not =0$, $n\ge 1$. If, in addition, $a_nc_n>0$, $n\ge 1$,
then the polynomials $(p_n)_n$ are orthogonal with respect to a moment functional which can be represented by a positive measure, and conversely.

\bigskip
The kind of transformation which consists in multiplying a moment functional $\mu$ by a polynomial $r$ is called a Christoffel transform. The new
moment functional $r\mu$ is defined by $\langle r\mu,p\rangle =\langle \mu,rp\rangle $.
It has a long tradition in the context of orthogonal polynomials: it goes back a century and a half ago when
E.B. Christoffel (see \cite{Chr} and also \cite{Sz}) studied it for the particular case $r(x)=x$.

Given a moment functional $\mu$ and a polynomial $r(x)=(x-a_1)\cdots (x-a_m)$,
we can give, using \cite{Sz}, Th. 2.5, a necessary and sufficient condition for the moment functional $r\mu$
to have a sequence  of orthogonal polynomials. Indeed, write $(p_n)_{n=0}^N$ ($N$ finite or infinite) for the orthogonal polynomials
with respect to $\mu $ and $\Lambda_n$ for the square matrix
\begin{equation}\label{mata0}
\Lambda_n=(p_{n+l}(a_i))_{i=1,\cdots , m,l=0,\cdots, m-1},\quad n\le N-m+1.
\end{equation}
Then the moment functional $r\mu$ has a sequence $(q_n)_{n=0}^K$, $K\le N-m+1$, of orthogonal polynomials if and only if $\det(\Lambda_n)\not =0$
for $n=0,\cdots ,K$.

For a real number $\lambda$, the moment functional $\mu (x+\lambda)$ is defined in the usual way
$\langle \mu (x+\lambda),p\rangle =\langle \mu ,p(x-\lambda)\rangle $. Hence, if $(p_n)_n$ are orthogonal polynomials with respect
to $\mu$ then $(p_n(x+\lambda))_n$ are orthogonal with respect to $\mu(x+\lambda)$.

We will use the following Lemma to construct orthogonal
polynomials with respect to a measure together with a Dirac delta (for related results see \cite{GPV}, \cite{Y}).

\begin{lemma}\label{addel}
From a measure $\nu$ and a real number $\lambda$, we define the measure $\mu$ by $\mu=(x-\lambda)\nu$. Assume that we have a sequence $(p_n)_n$ of
orthogonal polynomials with respect to $\mu$ and write $\alpha_n=\int p_nd\nu$.
For a given real number $M$ we write $\tilde \rho =\nu + M\delta_\lambda$ and define the numbers
\begin{equation}\label{hk2}
\beta_n=-\frac{\alpha_{n}+Mp_n(\lambda)}{\alpha_{n-1}+Mp_{n-1}(\lambda)}, \quad n\ge 1
\end{equation}
(we implicitly assume that $\alpha_{n-1}+Mp_{n-1}(\lambda)\not =0$, $n\ge 1$).
Then the polynomials defined by $q_0=1$ and $q_n=p_n+\beta_np_{n-1}$, are orthogonal with respect to $\tilde \rho$.
\end{lemma}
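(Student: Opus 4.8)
The plan is to verify directly the orthogonality conditions that characterize $(q_n)_n$. Since $q_n=p_n+\beta_np_{n-1}$ has degree exactly $n$ (its leading coefficient is inherited from $p_n$, and $\beta_n$ is well defined by hypothesis), the converse characterization recalled in the Preliminaries tells us it suffices to show that $\langle \tilde\rho, q_n(x)x^m\rangle=0$ for every $0\le m<n$. Writing $\tilde\rho=\nu+M\delta_\lambda$, this amounts to
$$
\int q_n(x)x^m\,d\nu(x)+M\,q_n(\lambda)\,\lambda^m=0,\qquad 0\le m<n.
$$
So the whole argument reduces to computing two pieces: the $\nu$-moments of $q_n$ against $x^m$, and the point-mass contribution at $\lambda$.

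The key step, and the one carrying the real content, is the computation of the moments $I_m=\int p_n(x)x^m\,d\nu(x)$. Because $\mu=(x-\lambda)\nu$ (so $\langle\mu,f\rangle=\langle\nu,(x-\lambda)f\rangle$) and $p_n$ is orthogonal with respect to $\mu$ to all polynomials of degree $<n$, I would observe that for $0\le m<n$
$$
0=\langle\mu,p_nx^m\rangle=\langle\nu,(x-\lambda)p_nx^m\rangle=I_{m+1}-\lambda I_m,
$$
which gives the recursion $I_{m+1}=\lambda I_m$ for $m=0,1,\dots,n-1$. Since $I_0=\int p_n\,d\nu=\alpha_n$, this collapses to the clean identity
$$
\int p_n(x)x^m\,d\nu(x)=\lambda^m\alpha_n,\qquad 0\le m\le n.
$$
Applying the same reasoning to $p_{n-1}$ yields $\int p_{n-1}(x)x^m\,d\nu(x)=\lambda^m\alpha_{n-1}$, valid on the range $0\le m\le n-1$ that I need.

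With these identities in hand the rest is bookkeeping engineered to produce a cancellation. For $0\le m<n$ I would combine the two moment formulas to get $\int q_nx^m\,d\nu=\lambda^m(\alpha_n+\beta_n\alpha_{n-1})$, and evaluate the Dirac term as $Mq_n(\lambda)\lambda^m=M\lambda^m\bigl(p_n(\lambda)+\beta_np_{n-1}(\lambda)\bigr)$. Adding the two contributions, every term shares the common factor $\lambda^m$, and the remaining bracket is
$$
\bigl(\alpha_n+Mp_n(\lambda)\bigr)+\beta_n\bigl(\alpha_{n-1}+Mp_{n-1}(\lambda)\bigr).
$$
Substituting the defining formula \eqref{hk2} for $\beta_n$ makes the second summand cancel the first exactly, so the whole expression vanishes and the orthogonality conditions hold. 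The only genuinely delicate point is the derivation of the collapsed moment identity $\int p_nx^m\,d\nu=\lambda^m\alpha_n$ from the Christoffel relation $\mu=(x-\lambda)\nu$; everything downstream is a single linear substitution chosen precisely so that the definition of $\beta_n$ forces the cancellation.
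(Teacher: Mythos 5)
Your computation of the vanishing conditions is correct, and it is really the paper's own argument transported to the monomial basis: the paper tests $q_n$ against $(x-\lambda)^j$ instead of $x^m$, which makes the Dirac mass disappear for $j\ge 1$ and turns the Christoffel relation $\mu=(x-\lambda)\nu$ into the one-line identity $\int (x-\lambda)^j q_n\,d\tilde\rho=\int (x-\lambda)^{j-1}q_n\,d\mu=0$, so that only $j=0$ needs the definition of $\beta_n$. Your moment recursion $I_{m+1}=\lambda I_m$ is the same fact after a change of basis; it costs you a short induction but nothing is wrong with it.

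There is, however, a genuine omission: you never check the non-degeneracy condition $\langle\tilde\rho,q_nx^n\rangle\neq 0$ (equivalently $B(q_n,q_n)\neq 0$). Proving $B(q_n,x^m)=0$ for $0\le m<n$ only gives that distinct $q_n$'s are mutually orthogonal; in the sense of the Preliminaries (and for every later use of this lemma, where $(q_n)_n$ must be \emph{the} orthogonal sequence of $\tilde\rho$) one also needs $B(q_n,x^n)\neq 0$ --- and the ``converse characterization'' you invoke already presupposes that $\tilde\rho$ admits an orthogonal sequence, which is part of what is being proved. The paper supplies this step explicitly: $\int(x-\lambda)^nq_n\,d\tilde\rho=\beta_n\int(x-\lambda)^{n-1}p_{n-1}\,d\mu\neq 0$, using that $\beta_n\neq 0$ (which follows from the standing assumption $\alpha_n+Mp_n(\lambda)\neq 0$ for all $n$). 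In your setup the analogous verification is available but not automatic: the recursion for $J_m=\int p_{n-1}x^m\,d\nu$ breaks exactly at $m=n-1$, where $\langle\mu,p_{n-1}x^{n-1}\rangle\neq 0$ enters, so that $\langle\tilde\rho,q_nx^n\rangle=\beta_n\,B(p_{n-1},x^{n-1})_\mu\neq 0$. Add that paragraph and the proof is complete.
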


\begin{proof}

We have to prove that $\int (x-\lambda)^jq_nd\tilde \rho =0$, $j=0,\cdots, n-1$ and $\int (x-\lambda)^nq_nd\tilde \rho \not =0$.

From the orthogonality of $(p_n)_n$ with respect to $\mu =(x-\lambda )\nu$ and the definition of $\tilde \rho$ we automatically get
for $j=1,\cdots, n-1$
$$
\int (x-\lambda)^jq_nd\tilde \rho=\int (x-\lambda)^{j-1}q_nd\mu=0,
$$
whatever the numbers $\beta_n$, $n\ge 1$, are. For $j=0$, using (\ref{hk2}), we get
\begin{align*}
\int q_nd\tilde \rho&=\int (p_n+\beta_np_{n-1})d\nu+Mp_n(\lambda)+\beta_nMp_{n-1}(\lambda)\\
&=\alpha_{n}+\beta_n\alpha_{n-1}+Mp_n(\lambda)+\beta_nMp_{n-1}(\lambda)=0.
\end{align*}
Finally, for $j=n$ we have $\int (x-\lambda)^nq_nd\tilde \rho =\beta_n\int (x-\lambda)^{n-1}p_{n-1}d\mu \not =0$.
\end{proof}

We will need the following technical Lemma.

\begin{lemma}\label{nlp} Let $u$ be a real number. We consider the polynomials $s_{0,u}=1$ and for $j\ge 1$
\begin{equation}\label{defsj}
s_{j,u}(x)=(-1)^j\prod _{i=0}^{j-1}[x+i(u-i)].
\end{equation}
Then for $j\ge 0$, $s_{j,u}(x(x-u))=(-x)_j(x-u)_j$.
\end{lemma}

\begin{proof}

If we expand  $(-x)_j(x-u)_j$, we have
$$
(-x)_j(x-u)_j=(-1)^jx\cdots(x-j+1)(x-u)\cdots (x-u+j-1).
$$
Multiplying the $(i+1)$-th and the $(j+i+1)$-th  factors in the product above $(0\le i\le j-1$), we get
$$
(x-i)(x-u+i)=x(x-u)+i(u-i).
$$
This gives for $(-x)_j(x-u)_j$ the formula
\begin{equation}\label{betjac3}
(-x)_j(x-u)_j=(-1)^j\prod_{i=0}^{j-1}[x(x-u)+i(u-i)]=s_{j,u}(x(x-u)).
\end{equation}
\end{proof}

\section{The method and its main ingredient}
In this Section, we introduce our method and its main ingredient. Although in this paper we will apply it to difference and differential operators and some of the classical discrete and classical families of
orthogonal polynomials, the method also works with  $q$-difference operators and the corresponding $q$-classical polynomials. This is the reason
why we introduce the method in an abstract setting.

The starting point is a sequence of polynomials $(p_n)_n$, $p_n$ of degree $n$, and an algebra of operators $\A $ acting in the linear space of polynomials.
In addition, we assume that the polynomials $p_n$, $n\ge 0$, are eigenfunctions of certain operator $D_p\in \A$. We write $(\theta_n)_n$ for the corresponding eigenvalues,
so that $D_p(p_n)=\theta_np_n$, $n\ge 0$.

Given a sequence of numbers $(\beta _n)_n$, we define a new sequence of polynomials $(q_n)_n$ by $q_0=1$ and
\begin{equation}\label{qncc}
q_n=p_n+\beta_np_{n-1}, \quad n\ge 1.
\end{equation}
Assuming certain structure for the sequence $(\beta _n)_n$, our procedure provides a new operator $D_q\in \A$ for which the polynomials $(q_n)_n$ are eigenfunctions.
The structure for the sequence $(\beta _n)_n$ will be determined by the main concept involved in our method: $\D$-operators.

We have two different types of $\D$-operators, depending on whether the
sequence of eigenvalues $(\theta_n)_n$ is linear in $n$ or not.

Given  a sequence of numbers $(\varepsilon_n)_n$, a $\D$-operator of type 1 associated to the algebra $\A$ and the sequence of polynomials $(p_n)_n$ is defined as follows.
We first consider  the operator $\zeta :\PP \to \PP $ defined by linearity
from $\zeta (p_n)=\varepsilon _np_{n-1}$. Using that $\dgr(\zeta (p))<\dgr (p)$ for all polynomial $p$, we next define the operator $\D$ by
\begin{align}\nonumber
\D &:\PP \to \PP \\\label{defTo}
\D(p)&=\sum _{j=1}^\infty (-1)^{j+1}\zeta^j(p).
\end{align}
We then say that $\D$ is a $\D$-operator if $\D\in \A$.

In the next Sections, we will display a $\D$-operator of type 1 for Charlier and Laguerre polynomials and two $\D$-operators of type 1 for Meixner and Krawtchouk polynomials (see Lemmas \ref{lTch}, \ref{lTme}, \ref{lTkr} and \ref{lTlag}).

Once we have introduced the main concept of $\D$-operator of type 1 defined by the sequence of numbers $(\varepsilon _n)_n$, we are ready to go on with our method.
The estructure mentioned above for the sequence $(\beta_n)_n$ (which defines the polynomials $(q_n)_n$ (\ref{qncc})) is then
$\displaystyle \beta_n =\varepsilon _n\frac{P(\theta_n)}{P(\theta_{n-1})}$, where $P$ is any arbitrary polynomial.
The details of the method are included in
the following Lemma. The Lemma is an elementary but powerful result: its proof is just an easy computation, but it provides
higher order differential, difference or $q$-difference operators for a large class of polynomials (some of which turn out
to be orthogonal with respect to a measure). For the case of differential operators, these polynomials include the Krall-Laguerre-Koornwinder families
orthogonal with respect to the measures (\ref{laguerred}).

\begin{lemma}\label{fl1v}
Let $\A$ and $(p_n)_n$ be, respectively, an algebra of operators acting in the linear space of polynomials, and a sequence of polynomials, $p_n$ of degree $n$.
We assume that $(p_n)_n$ are eigenfunctions of an operator $D_p\in \A$, that is, there exist  numbers $\theta _n$, $n\ge 0$, such that
$D_p(p_n)=\theta_np_n$, $n\ge 0$. We also have a sequence of numbers $(\varepsilon_n )_n$ which defines a $\D$-operator of type 1 for $(p_n)_n$ and $\A$ (see (\ref{defTo})).
For an arbitrary polynomial $P_2$ such that $P_2(\theta_n)\not=0$, $n\ge 0$, we define the sequences of numbers $(\gamma_n)_n$ and $(\lambda_n)_n$ by
\begin{align}
\gamma_{n+1}&=P_2(\theta_n),\quad n\ge 0, \\ \label{defjg}
\lambda_n-\lambda_{n-1}&=\gamma _n, \quad n\ge 1,
\end{align}
and assume that there exists a polynomial $P_1$ such that $\lambda_n=P_1(\theta_n)$.
We finally define the sequence of polynomials $(q_n)_n$ by $q_0=1$ and
\begin{equation}\label{defqng}
q_n=p_n+\beta_np_{n-1},\quad n\ge 1,
\end{equation}
where the numbers $\beta_n$, $n\ge 0$, are given by
\begin{equation}\label{defbetng}
\beta_n=\varepsilon_n \frac{\gamma_{n+1}}{\gamma_n}, \quad n\ge 1.
\end{equation}
Then $D_q(q_n)=\lambda _nq_n$ where the operator $D_q$ is defined by
\begin{equation}\label{defD}
D_q=P_1(D_p)+\D P_2(D_p).
\end{equation}
Moreover $D_q\in \A$.
\end{lemma}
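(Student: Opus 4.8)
The plan is to reduce the whole statement to a single identity describing how the $\D$-operator acts on consecutive members of $(p_n)_n$, and then to verify the eigenvalue equation by a direct linear computation. First I would dispose of the membership claim: since $\A$ is an algebra and $D_p\in\A$, both $P_1(D_p)$ and $P_2(D_p)$ lie in $\A$; because $\D\in\A$ by the hypothesis that $(\varepsilon_n)_n$ defines a $\D$-operator of type 1, the product $\D P_2(D_p)$ is in $\A$, and hence so is $D_q=P_1(D_p)+\D P_2(D_p)$. This settles the last assertion with no further work. Next I would record the action of $D_q$ on the original polynomials $p_n$. From $D_p(p_n)=\theta_n p_n$ we get $D_p^{\,j}(p_n)=\theta_n^{\,j}p_n$, so for any polynomial $P$ one has $P(D_p)(p_n)=P(\theta_n)p_n$. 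With the definitions $\lambda_n=P_1(\theta_n)$ and $\gamma_{n+1}=P_2(\theta_n)$ this gives
\begin{equation*}
D_q(p_n)=\lambda_n p_n+\gamma_{n+1}\D(p_n),\qquad n\ge 0 .
\end{equation*}

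The main ingredient is the identity
\begin{equation*}
\D(p_n)=\varepsilon_n\bigl(p_{n-1}-\D(p_{n-1})\bigr),\qquad n\ge 1 ,
\end{equation*}
which I would derive straight from the definition (\ref{defTo}). Writing $\zeta^{j}(p_n)=\zeta^{j-1}(\zeta(p_n))=\varepsilon_n\zeta^{j-1}(p_{n-1})$ and reindexing the series $\D(p_n)=\sum_{j\ge 1}(-1)^{j+1}\zeta^{j}(p_n)$ by $i=j-1$ yields $\D(p_n)=\varepsilon_n\sum_{i\ge 0}(-1)^{i}\zeta^{i}(p_{n-1})=\varepsilon_n\bigl(p_{n-1}-\D(p_{n-1})\bigr)$, where the $i=0$ term produces $p_{n-1}$ and the remaining alternating sum is exactly $\D(p_{n-1})$. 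All series here are finite because $\zeta$ strictly lowers the degree, so there is no convergence issue.

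Finally I would combine these facts. Expanding $D_q(q_n)=D_q(p_n)+\beta_n D_q(p_{n-1})$ with the displayed formula for $D_q(p_m)$ gives
\begin{equation*}
D_q(q_n)=\lambda_n p_n+\beta_n\lambda_{n-1}p_{n-1}+\gamma_{n+1}\D(p_n)+\beta_n\gamma_n\D(p_{n-1}) .
\end{equation*}
To reach $\lambda_n q_n=\lambda_n p_n+\beta_n\lambda_n p_{n-1}$, and using (\ref{defjg}) in the form $\lambda_n-\lambda_{n-1}=\gamma_n$, it suffices to show that the two $\D$-terms equal $\beta_n\gamma_n p_{n-1}$. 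Substituting the key identity $\D(p_n)=\varepsilon_n\bigl(p_{n-1}-\D(p_{n-1})\bigr)$ and then invoking the normalization (\ref{defbetng}) in the convenient form $\beta_n\gamma_n=\varepsilon_n\gamma_{n+1}$ makes the $\D(p_{n-1})$ contributions cancel and leaves precisely $\varepsilon_n\gamma_{n+1}p_{n-1}=\beta_n\gamma_n p_{n-1}$, as needed; the case $n=0$ is immediate since $q_0=p_0$ and $\D(p_0)=0$. I do not expect a genuine obstacle, and indeed the authors already note that the proof is an easy computation; the only points demanding care are the bookkeeping in the reindexing that produces the key identity, and the coordinated use of $\beta_n=\varepsilon_n\gamma_{n+1}/\gamma_n$ with the telescoping relation $\lambda_n-\lambda_{n-1}=\gamma_n$ to close the cancellation.
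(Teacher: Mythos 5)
Your proposal is correct and follows essentially the same route as the paper: compute $D_q(p_m)=\lambda_m p_m+\gamma_{m+1}\D(p_m)$ from the eigenfunction property, expand $D_q(q_n)=D_q(p_n)+\beta_n D_q(p_{n-1})$, and close the computation using $\beta_n\gamma_n=\varepsilon_n\gamma_{n+1}$ together with $\lambda_n-\lambda_{n-1}=\gamma_n$. The only (harmless) difference is organizational: the paper expands $\D(p_n)$ fully as $\sum_{j=1}^n(-1)^{j+1}\varepsilon_n\cdots\varepsilon_{n-j+1}p_{n-j}$ and checks that each coefficient $a_{n,j}$, $j\ge 2$, vanishes, whereas you package that term-by-term cancellation into the single recursion $\D(p_n)=\varepsilon_n\bigl(p_{n-1}-\D(p_{n-1})\bigr)$, which is correct and arguably cleaner.
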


\begin{proof}

Since $D_p(p_n)=\theta _np_n$, we have that
\begin{align*}
P_1(D_p)(p_n)&=P_1(\theta_n)p_n=\lambda_np_n,\\
P_2(D_p)(p_n)&=P_2(\theta_n)p_n=\gamma_{n+1}p_n.
\end{align*}
The definition of $D_q$ (\ref{defD}) then  gives
$$
D_q(p_n)=\lambda _np_n+\gamma_{n+1}\D (p_n).
$$
Using the definition of the $\D$-operator $\D$ (\ref{defTo}), we have
\begin{align*}
D_q(p_n)&=\lambda _np_n+\gamma_{n+1}\sum_{j=1}^n(-1)^{j+1}\zeta^j(p_n)\\
&=\lambda _np_n+\sum_{j=1}^n\gamma_{n+1}(-1)^{j+1}\varepsilon_n\cdots \varepsilon _{n-j+1}p_{n-j}.
\end{align*}
Taking into account the definition of the polynomials $(q_n)_n$ (\ref{defqng}), we get
\begin{align}\nonumber
D_q(q_n)&=D_q(p_n)+\beta_nD_q(p_{n-1})\\\nonumber &=\lambda _np_n
+\gamma_{n+1}\varepsilon _n p_{n-1}+\sum_{j=2}^n\gamma_{n+1}(-1)^{j+1}\varepsilon_n\cdots \varepsilon _{n-j+1}p_{n-j}\\\nonumber
&\quad \quad + \beta_n\lambda _{n-1}p_{n-1}+\beta_n\sum_{j=1}^{n-1}\gamma_{n}(-1)^{j+1}\varepsilon_{n-1}\cdots \varepsilon _{n-j}p_{n-1-j}\\\label{eq1}
&=\lambda_n p_n+(\beta_n\lambda_{n-1}+\gamma_{n+1}\varepsilon _n)p_{n-1}+\sum_{j=2}^{n}a_{n,j}p_{n-j},
\end{align}
where
$$
a_{n,j}=(-1)^{j+1}(\gamma_{n+1}\varepsilon_n\cdots \varepsilon _{n-j+1}-\beta_n\gamma_n\varepsilon_{n-1}\cdots \varepsilon _{n-j+1}),\quad 2\le j\le n.
$$
Using the definition of $\lambda_n$ and $\beta_n$ ((\ref{defjg}) and (\ref{defbetng})), we easily have
\begin{align*}
\beta_n\lambda_{n-1}+\gamma_{n+1}\varepsilon _n&=\lambda_n\beta_n,\\
a_{n,j}&=0,\quad 2\le j\le n.
\end{align*}
From where (\ref{eq1}) gives that $D_q(q_n)=\lambda_n p_n+\lambda_{n}\beta_np_{n-1}=\lambda_nq_n$.

Since $\D$ is a $\D$-operator for the polynomials $(p_n)_n$ and the algebra $\A$, we have that $\D\in \A$, and then $D_q\in \A$ as well.
\end{proof}

\bigskip

Actually, a slight modification of the Lemma provides a huge family of operators in $\A$ for which the polynomials $(q_n)_n$ are eigenfunctions. Indeed, consider any polynomial $G$ for which there exists a polynomial $P_1$ satisfying that $P_1(\theta _n)-P_1(\theta _{n-1})=G(\theta _{n-1})P_2(\theta _{n-1})$ (when the eigenvalues $\theta_n$ are linear in $n$, that polynomial $P_1$ always exists for each polynomial $G$). Proceeding as in the previous Lemma, it is easy to see that the operator $D_{q,G}$ defined by
$D_{q,G}=P_1(D_p)+G(D_p)\D P_2(D_p)$ belongs to $\A$ and satisfies $D_{q,G}(q_n)=P_1(\theta _n)q_n$.

\bigskip

We will apply the version of our method established in Lemma \ref{fl1v} when the eigenvalues $(\theta _n)_n$ of the polynomials $(p_n)_n$ with respect
to the operator $D_p$ are linear in $n$. It is easy to see that if $(\theta _n)_n$ are, say, quadratic in $n$, the
hypothesis of  Lemma \ref{fl1v} can not fulfil. Indeed, if $\gamma_{n+1}$ is a polynomial in $\theta _n$ and $\theta_n$ are quadratic in $n$, then $\gamma _n$
is a polynomial in $n$ of even degree, and the sequence $\lambda _n$, defined by $\lambda_n-\lambda_{n-1}=\gamma_n$, is a polynomial
in $n$ of odd degree. Hence, it can not be a polynomial in the quadratic sequence $\theta _n$.

To fix this situation we need to introduce a modified version of the $\D$-operator defined in (\ref{defTo}). Two sequences of numbers
$(\varepsilon_n)_n$ and $(\sigma_n)_n$ are now the starting point. A $\D$-operator of type 2 associated to the algebra $\A$ and the sequence of polynomials $(p_n)_n$ is defined as follows.
As before we first consider  the operator $\zeta :\PP \to \PP $ defined by linearity
from $\zeta (p_n)=\varepsilon _np_{n-1}$. We next define the operator $\D$ again by linearity from
\begin{equation}\label{defTo2}
\D (p_n)=-\frac{1}{2}\sigma_{n+1} p_n+\sum _{j=1}^n (-1)^{j+1}\sigma_{n+1-j}\zeta^j(p_n).
\end{equation}
We then say that $\D$ is a  $\D$-operator if $\D\in \A$.

When the sequence $(\sigma_n)_n$ is constant, a $\D$-operator of type 2 reduces to a $\D$-operator of type 1 (up to constants).

In Lemmas \ref{lTha} and \ref{lTjac}, we display four $\D$-operators of type 2 for
the Hahn polynomials and two $\D$-operators of type 2 for
the Jacobi polynomials, respectively.

We  can now establish the version of our method for $\D$-operators of type 2.

\begin{lemma}\label{fl2v}
Let $\A$ and $(p_n)_n$ be, respectively, an algebra of operators acting in the linear space of polynomials, and a sequence of polynomials, $p_n$ of degree $n$.
We assume that $(p_n)_n$ are eigenfunctions of an operator $D_p\in \A$, that is, there exist  numbers $\theta _n$, $n\ge 0$, such that
$D_p(p_n)=\theta_np_n$, $n\ge 0$. We also have two sequences of numbers $(\varepsilon_n )_n$ and $(\sigma_n)_n$ which define a $\D$-operator of type 2 for $(p_n)_n$ and $\A$
(see (\ref{defTo2})).
For an arbitrary polynomial $P_2$ such that $P_2(\theta_n)\not =0$, $n\ge 0$, we define the sequences of numbers $(\gamma_n)_n$ and $(\lambda_n)_n$ by
\begin{align}
\gamma_{n+1}&=P_2(\theta_n),\quad n\ge 0, \\
\lambda_n-\lambda_{n-1}&=\sigma_n\gamma _n, \quad n\ge 1,
\end{align}
and assume that there exists a polynomial $P_1$ such that $\lambda_{n+1}+\lambda_n=P_1(\theta_n)$.
We finally define the sequence of polynomials $(q_n)_n$ by $q_0=1$ and
\begin{equation}\label{defqng2}
q_n=p_n+\beta_np_{n-1},\quad n\ge 1,
\end{equation}
where the numbers $\beta_n$, $n\ge 0$, are given by
\begin{equation}\label{defbetng2}
\beta_n=\varepsilon_n \frac{\gamma_{n+1}}{\gamma_n}, \quad n\ge 1.
\end{equation}
Then $D_q(q_n)=\lambda _nq_n$ where the operator $D_q$ is defined by
\begin{equation}\label{defD2}
D_q=\frac{1}{2}P_1(D_p)+\D P_2(D_p).
\end{equation}
Moreover $D_q\in \A$.
\end{lemma}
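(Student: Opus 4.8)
The plan is to mimic the computation in the proof of Lemma \ref{fl1v}, the only genuine novelties being the diagonal term $-\frac12\sigma_{n+1}p_n$ in the definition (\ref{defTo2}) of the type 2 operator and the fact that $P_1$ now interpolates $\lambda_{n+1}+\lambda_n$ rather than $\lambda_n$. First I would apply $D_q=\frac12 P_1(D_p)+\D P_2(D_p)$ to a single $p_n$. Since $D_p(p_n)=\theta_n p_n$, we get $\frac12 P_1(D_p)(p_n)=\frac12(\lambda_{n+1}+\lambda_n)p_n$ and $P_2(D_p)(p_n)=\gamma_{n+1}p_n$, so that $D_q(p_n)=\frac12(\lambda_{n+1}+\lambda_n)p_n+\gamma_{n+1}\D(p_n)$. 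Expanding $\D(p_n)$ via (\ref{defTo2}) and using $\zeta^j(p_n)=\varepsilon_n\cdots\varepsilon_{n-j+1}p_{n-j}$ then produces a diagonal contribution $-\frac12\gamma_{n+1}\sigma_{n+1}p_n$ together with a lower-order tail $\gamma_{n+1}\sum_{j=1}^n(-1)^{j+1}\sigma_{n+1-j}\varepsilon_n\cdots\varepsilon_{n-j+1}p_{n-j}$.

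The decisive point is the coefficient of $p_n$. Here the whole type 2 bookkeeping pays off: combining $\frac12(\lambda_{n+1}+\lambda_n)$ with $-\frac12\gamma_{n+1}\sigma_{n+1}$ and invoking the relation $\sigma_{n+1}\gamma_{n+1}=\lambda_{n+1}-\lambda_n$ (which is exactly the defining recursion for $\lambda$ at index $n+1$) collapses this coefficient to $\frac12\big((\lambda_{n+1}+\lambda_n)-(\lambda_{n+1}-\lambda_n)\big)=\lambda_n$. Thus $D_q(p_n)=\lambda_n p_n+\gamma_{n+1}\sum_{j=1}^n(-1)^{j+1}\sigma_{n+1-j}\varepsilon_n\cdots\varepsilon_{n-j+1}p_{n-j}$, which is the precise analogue of the corresponding formula in Lemma \ref{fl1v}.

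Then I would form $D_q(q_n)=D_q(p_n)+\beta_n D_q(p_{n-1})$ and read off the coefficient of each $p_{n-j}$. For $j\ge 2$ the two contributions share the common factor $\sigma_{n+1-j}\varepsilon_{n-1}\cdots\varepsilon_{n-j+1}$ and collapse to a multiple of $\gamma_{n+1}\varepsilon_n-\beta_n\gamma_n$, which vanishes by the definition (\ref{defbetng2}) of $\beta_n$ (precisely as in the type 1 proof, since the $\sigma$'s factor out uniformly). For the coefficient of $p_{n-1}$ one combines the $j=1$ term $\gamma_{n+1}\sigma_n\varepsilon_n$ with $\beta_n\lambda_{n-1}$; using $\gamma_{n+1}\varepsilon_n=\beta_n\gamma_n$ and then $\sigma_n\gamma_n=\lambda_n-\lambda_{n-1}$ turns this into $\beta_n(\sigma_n\gamma_n+\lambda_{n-1})=\beta_n\lambda_n$. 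Hence $D_q(q_n)=\lambda_n p_n+\lambda_n\beta_n p_{n-1}=\lambda_n q_n$. Finally $D_q\in\A$ because $\D\in\A$ by hypothesis and $P_1(D_p),P_2(D_p)\in\A$, as $\A$ is an algebra containing $D_p$.

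The main obstacle is really only the index bookkeeping: one must track how the shifted weights $\sigma_{n+1-j}$ ride along each $\zeta^j$, and confirm that after the reindexing $j\mapsto j+1$ applied to the tail of $D_q(p_{n-1})$ the $\sigma$ factors align so that they can be pulled out before invoking the $\beta_n$ cancellation. Beyond that the argument is the same elementary cancellation as in Lemma \ref{fl1v}, the single substantive difference being the $p_n$-coefficient computation, where the symmetric combination $\lambda_{n+1}+\lambda_n$ and the diagonal $\sigma$ term conspire to reproduce $\lambda_n$.
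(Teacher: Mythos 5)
Your computation is correct and is precisely the argument the paper has in mind: its proof of this lemma is literally the one-line remark that one proceeds as in Lemma \ref{fl1v}, and you have carried out that adaptation accurately, including the key new step where $\frac12(\lambda_{n+1}+\lambda_n)-\frac12\sigma_{n+1}\gamma_{n+1}=\lambda_n$ handles the diagonal term of the type 2 operator. Nothing further is needed.
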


\begin{proof}
The Lemma can be proved proceeding in the same way as in the proof of Lemma \ref{fl1v}.
\end{proof}

We remark that if an operator $\D$ is a $\D$-operator for the polynomials $(p_n)_n$ and the algebra $\A$ then it is also a $\D$-operator
for the sequence of polynomials $\tilde p_n=a_np_n$, whatever the real numbers $a_n\not =0$, $n\ge 0$, are. Indeed, it is straightforward to see that
if $\D$ is defined from $(p_n)_n$ by the sequence $(\varepsilon _n )_n$ then $\D$ is the defined from $(\tilde p_n)_n$ by the sequence
$$
\tilde \varepsilon _n=\frac{a_n}{a_{n-1}}\varepsilon_n.
$$
Using this fact, we can associate $\D$-operators to a measure having orthogonal polynomials. Indeed, we say that an operator $\D$ is a $\D$-operator
for a measure $\mu$ if it is a $\D$-operator for a sequence $(p_n)_n$ of orthogonal polynomials with respect to $\mu$.
We point out that the sequence of numbers $(\varepsilon_n)_n$, which defines the $\D$-operator from the sequence of polynomials $(p_n)_n$, depends on this sequence of polynomials. In other words, given a measure $\mu$ the corresponding set of $\D$-operators only depends on the measure $\mu$ and it is independent of the orthogonal polynomials normalization, but the associated sequences of numbers $(\varepsilon_n)_n$ depend on this normalization.

\section{Charlier case}
For $a\not =0$, we write $(c_{n}^{a})_n$ for the sequence of  Charlier polynomials
defined by
\begin{equation}\label{defchp}
c_{n}^{a}(x)=\frac{1}{n!}\sum _{j=0}^n (-a)^{n-j}\binom{n}{j}\binom{x}{j}j!
\end{equation}
(that and the next formulas can be found in \cite{Ch}, pp. 170-1; see also \cite{KLS}, pp, 247-9 or \cite{NSU}, ch. 2).
The Charlier polynomials are orthogonal  with respect to the  measure
\begin{equation}\label{chw}
\rho_a=\sum _{x=0}^\infty\frac{a^{x}}{x!}\delta _x, \quad a\not=0.
\end{equation}
The measure $\rho_a$ is positive only when $a>0$.

The three term recurrence formula for $(c_{n}^{a})_n$ is ($c_{-1}^{a}=0$)
\begin{equation}\label{repnch}
xc_n=(n+1)c_{n+1}+(n+a)c_n+ac_{n-1}, \quad n\ge 0
\end{equation}
(to simplify the notation we remove some parameters in some formulas).
They are eigenfunctions of the following second order difference operator
\begin{equation}\label{sodech}
D_{a} =x\Sh_{-1}-(x+a)\Sh_0+a\Sh_1,\qquad D_{a} (c_{n}^{a})=-nc_{n}^{a},\quad n\ge 0.
\end{equation}

They also satisfy the simple difference relation
\begin{equation}\label{sdch}
\Delta (c_n)=c_{n-1}.
\end{equation}

We first identify a $\D$-operator for the Charlier polynomials.

\begin{lemma}\label{lTch}
For $a \not =0 $, the operator $\D$ defined by (\ref{defTo}) from the sequence $\varepsilon _n=1$, $n\ge 0$,
is a $\D$-operator for the  Charlier polynomials $(c_{n}^{a})_n$ (\ref{defchp}) and the algebra $\A$  (\ref{algdiff}) of difference
operators  with polynomial coefficients.
More precisely  $\D=\nabla$.
\end{lemma}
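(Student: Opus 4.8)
The plan is to show that the abstract lowering operator $\zeta$ determined by $\varepsilon_n=1$ coincides, on the Charlier basis, with the forward difference $\Delta$, and then to resum the resulting series of powers of $\Delta$ into the single shift operator $I-\Sh_{-1}=\nabla$.

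First I would identify $\zeta$ with $\Delta$. By definition $\zeta(c_n^a)=\varepsilon_n c_{n-1}^a=c_{n-1}^a$, while the simple difference relation (\ref{sdch}) gives $\Delta(c_n^a)=c_{n-1}^a$. Since these two linear operators agree on the basis $(c_n^a)_n$ of $\PP$, they are equal: $\zeta=\Delta$. Consequently the defining formula (\ref{defTo}) becomes $\D=\sum_{j=1}^\infty(-1)^{j+1}\Delta^j$, a series which, applied to any fixed polynomial, terminates because $\Delta$ strictly lowers the degree; hence $\D$ is a well-defined operator on $\PP$.

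Next I would recognize this series as geometric and sum it using shift operators. Writing $\Sh_1=I+\Delta$ and setting $T=I-\D=\sum_{j=0}^\infty(-1)^j\Delta^j$, the product $\Sh_1 T=(I+\Delta)(I-\Delta+\Delta^2-\cdots)$ telescopes to the identity when applied to any polynomial (all terms cancel except the leading $I$, the remainder being a power of $\Delta$ high enough to annihilate the given polynomial). Since $\Sh_1$ is a bijection of $\PP$ with inverse $\Sh_{-1}$, this forces $T=\Sh_{-1}$, and therefore $\D=I-T=I-\Sh_{-1}=\nabla$, using that $\nabla(f)=f(x)-f(x-1)=(I-\Sh_{-1})(f)$.

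Finally, since $\nabla=I-\Sh_{-1}$ is a finite order difference operator with constant (hence polynomial) coefficients, we have $\D=\nabla\in\A$, so $\D$ is indeed a $\D$-operator for the Charlier polynomials and the algebra (\ref{algdiff}). There is no genuine obstacle here: the only point deserving a word of care is the convergence of the series in (\ref{defTo}), which is immediate because $\Delta$ is degree-lowering and the telescoping identity is applied to polynomials, where every sum is finite. Alternatively one could verify $\D(c_n^a)=\nabla(c_n^a)$ directly from the terminating expansion $\D(c_n^a)=\sum_{j=1}^n(-1)^{j+1}c_{n-j}^a$, but the operator computation above avoids the bookkeeping.
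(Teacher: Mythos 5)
Your proof is correct. Both you and the paper begin the same way, using (\ref{sdch}) to identify $\zeta=\Delta$ on the basis $(c_n^a)_n$, so the whole content of the lemma reduces to the operator identity $\sum_{j\ge 1}(-1)^{j+1}\Delta^j=\nabla$ on $\PP$. Where you diverge is in how this identity is established: the paper tests it on the binomial basis $\binom{x}{m}$, using $\Delta\binom{x}{m}=\binom{x}{m-1}$ and the alternating-sum identity $\sum_{j=0}^m(-1)^j\binom{x}{m-j}=\binom{x-1}{m}$, whereas you resum the series as a geometric (Neumann) series: since $\Sh_1=I+\Delta$ and the series terminates on each polynomial, $\left(I+\Delta\right)\sum_{j\ge0}(-1)^j\Delta^j=I$ on $\PP$, and invertibility of $\Sh_1$ forces $\sum_{j\ge0}(-1)^j\Delta^j=\Sh_{-1}$, hence $\D=I-\Sh_{-1}=\nabla$. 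Your telescoping argument is airtight (the leftover term is $\pm\Delta^{d+1}p=0$ for $\deg p=d$), and it has the merit of being basis-free and of explaining \emph{why} the answer is $\nabla$ rather than merely verifying it; the paper's computation buys an explicit formula for $\D\binom{x}{m}$ at the cost of invoking a combinatorial identity. Either route lands on $\D=\nabla\in\A$, and your closing observation that $\nabla=I-\Sh_{-1}$ visibly lies in the algebra (\ref{algdiff}) completes the claim.
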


\begin{proof}
Indeed, since $\Delta (c_{n}^{a})=c_{n-1}^{a}$ (\ref{sdch}), we have that $\zeta =\Delta$. Taking into account that
$\Delta \binom{x}{m}=\binom{x}{m-1}$, $m\ge 1$, we find
\begin{align*}
\D \binom{x}{m}&=\sum_{j=1}^\infty (-1)^{j+1}\Delta ^j\binom{x}{m}=\sum_{j=1}^m (-1)^{j+1}\binom{x}{m-j}\\
&=\binom{x}{m}-\sum_{j=0}^m (-1)^{j}\binom{x}{m-j}=\binom{x}{m}-\binom{x-1}{m}.
\end{align*}
This gives $\D =\nabla \in \A$.

\end{proof}

Lemma \ref{fl1v} and the $\D$-operator for the  Charlier polynomials found in the previous Lemma, automatically produce the large
clase of polynomials
satisfying higher order difference equations displayed in Theorem \ref{th5.1ch} in the Introduction. We are now ready to prove it.

\begin{proof}
Indeed, the theorem is a straightforward consequence of the Lemma \ref{fl1v}.
We have just to identify who the main characters are in this example.

If we write $\varepsilon_n=1$, Lemma \ref{lTch} gives that the sequence $(\varepsilon_n)_n$
defines a $\D$-operator for the  Charlier polynomials $(c_{n}^{a})_n$ and that $\D=\nabla$.

The eigenvalues of $(c_{n}^{a})_n$ with respect to $D_{a}$ are $\theta_n=-n$, hence
the definition of $(\gamma _n)_n$, $(\lambda _n)_n$  and the relationships between the polynomials $P_1$ and $P_2$
automatically give
$$
\lambda _n-\lambda_{n-1}=\gamma_n.
$$
The Lemma \ref{fl1v} now shows that the polynomials $q_{n}$, $n\ge 0$, defined by (\ref{qnno}) are eigenfunctions of the difference operator
$\displaystyle D=P_1(D_{a})+\nabla P_2(D_{a})$
with eigenvalues $\lambda_n$.

Since $P_1$ and $P_2$ are polynomials of degrees $k+1$ and $k$, respectively, and $D_a$ has genre $(-1,1)$,
the operators $D(P_1)$ and $D(P_2)$ have order $2k+2$ and $2k$ and genre $(-k-1,k+1)$ and $(-k,k)$, respectively.
Write now $f_{-1}=x$ and $f_1=a$ for the coefficients of $\Sh_{-1}$ and $\Sh_1$ in $D_a$, and $u_1$, $u_2$ for the leading coefficients
of the polynomials $P_1$ and $P_2$, respectively. A straightforward computation
gives that the coefficients of $\Sh_{-k-1}$ and $\Sh_{k+1}$ in the operator $D$ are $f_{-1}(x-1)\cdots f_{-1}(x-k)(u_1f_{-1}(x)-u_2)$
and $u_1f_{1}(x)\cdots f_{1}(x+k)$, respectively. We can see that both are different to zero. This shows that
the operator $D$ has order $2k+2$ and genre $(-k-1,k+1)$.

\end{proof}

We next find some particular choices of $P_2$ for which the sequence of polynomials $(q_n)_n$ (\ref{qnno}) is orthogonal with respect to a
measure.

For $k$ a positive integer, let $\tilde \rho_{k,a}$ be the measure defined by ($a\not =0$)
\begin{align}\label{tchw}
\tilde \rho_{k,a}&=(x+1)\cdots (x+k)\rho_a(x+k+1)\\\nonumber
&=(-1)^kk!\delta_{-k-1}+\sum _{x=0}^\infty \frac{a^{x+k+1}}{(x+k+1)x!}\delta _x.
\end{align}
To simplify the notation we will sometimes remove the dependence of $a$ and $k$ and write $\tilde \rho=\tilde \rho_{k,a}$.

In order to find the orthogonal polynomials with respect to $\tilde \rho _{k,a}$ we need the following Lemma.

\begin{lemma}\label{chxx} For a positive integer $k$ and a non-null real number $a$, we have
\begin{align}\label{chx+k+1}
\langle \tilde \rho_{k,a},c_{n}^{a}\rangle = (-1)^ne^ak!c_{k}^{-a}(-n-1).
\end{align}
\end{lemma}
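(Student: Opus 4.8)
The plan is to compute the generating function $\sum_{n\ge 0}\langle\tilde\rho_{k,a},c_n^a\rangle\,s^n$ in closed form and then read off the coefficient of $s^n$. First I would record the Charlier generating function, which can be read off directly from the explicit expansion (\ref{defchp}):
\begin{equation*}
\sum_{n=0}^\infty c_n^a(x)\,s^n=e^{-as}(1+s)^x .
\end{equation*}
Next I would rewrite the measure using the first line of (\ref{tchw}), namely $\tilde\rho_{k,a}=(x+1)\cdots(x+k)\,\rho_a(x+k+1)$, together with the Christoffel rule $\langle r\mu,p\rangle=\langle\mu,rp\rangle$ and the shift rule $\langle\mu(x+\lambda),p\rangle=\langle\mu,p(x-\lambda)\rangle$ recalled in the Preliminaries. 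Since $r(x)=(x+1)\cdots(x+k)$ gives $r(x-k-1)=(x-1)(x-2)\cdots(x-k)$, one obtains for any power series $g$
\begin{equation*}
\langle\tilde\rho_{k,a},g\rangle=\sum_{x=0}^\infty\frac{a^x}{x!}\,(x-1)(x-2)\cdots(x-k)\,g(x-k-1).
\end{equation*}
Applying this to $g(x)=(1+s)^x$ and inserting the generating function yields, for $s$ near $0$,
\begin{equation*}
\sum_{n=0}^\infty\langle\tilde\rho_{k,a},c_n^a\rangle\,s^n
=e^{-as}(1+s)^{-k-1}\,T\bigl(a(1+s)\bigr),\qquad
T(z):=\sum_{x=0}^\infty\frac{z^x}{x!}(x-1)\cdots(x-k).
\end{equation*}

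The technical core is to evaluate $T(z)$. As the factors $(x-1)\cdots(x-k)$ vanish for $x=1,\dots,k$, I would split off $x=0$ and reindex $x=y+k+1$ in the tail, getting $T(z)=(-1)^kk!+U(z)$ with $U(z)=\sum_{y\ge0}z^{y+k+1}/\bigl((y+k+1)\,y!\bigr)$. Since $U'(z)=z^ke^z$ and $U(0)=0$, integrating (via the standard antiderivative of $t^ke^t$) gives
\begin{equation*}
T(z)=k!\,e^{z}\sum_{j=0}^k\frac{(-1)^{k-j}}{j!}\,z^{j}.
\end{equation*}
Substituting $z=a(1+s)$ and using $e^{-as}e^{a(1+s)}=e^a$ collapses the exponential prefactor, leaving the clean expression
\begin{equation*}
\sum_{n=0}^\infty\langle\tilde\rho_{k,a},c_n^a\rangle\,s^n
=e^{a}k!\sum_{j=0}^k\frac{(-1)^{k-j}a^j}{j!}\,(1+s)^{j-k-1}.
\end{equation*}

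Finally I would extract the $s^n$-coefficient using $(1+s)^{j-k-1}=\sum_n\binom{j-k-1}{n}s^n$ and $\binom{j-k-1}{n}=(-1)^n(k+1-j)_n/n!$, obtaining
\begin{equation*}
\langle\tilde\rho_{k,a},c_n^a\rangle=(-1)^n e^a k!\,\frac{1}{n!}\sum_{j=0}^k\frac{(-1)^{k-j}a^j}{j!}(k+1-j)_n .
\end{equation*}
It then remains to identify the last sum with $c_k^{-a}(-n-1)$. Reindexing $j\mapsto k-j$ and using $(i+1)_n/n!=(n+1)_i/i!$ turns it into $\sum_i(-1)^ia^{k-i}(n+1)_i/\bigl(i!\,(k-i)!\bigr)$, which is exactly the value at $x=-n-1$ of $c_k^{-a}$ read from (\ref{defchp}) together with $\binom{-n-1}{i}i!=(-1)^i(n+1)_i$, completing the identification.

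I expect the main obstacle to be the closed-form evaluation of $T(z)$, that is, handling the denominator $x+k+1$ coming from the masses $a^{x+k+1}/\bigl((x+k+1)x!\bigr)$ of $\tilde\rho_{k,a}$; the trick $U'(z)=z^ke^z$ followed by integration is what makes this tractable and produces the polynomial factor whose coefficients are precisely those of $c_k^{-a}$. The concluding matching with $c_k^{-a}(-n-1)$ is routine but slightly fiddly, requiring care with the Pochhammer/binomial identities at the negative argument $-n-1$.
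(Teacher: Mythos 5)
Your argument is correct, but it follows a genuinely different route from the paper's. The paper avoids any closed-form summation: it sets $\xi_n=\langle\tilde\rho_{k,a},c_n^a\rangle$ and $\zeta_n=(-1)^ne^ak!\,c_k^{-a}(-n-1)$ and shows that both sequences satisfy the same three-term recurrence in $n$ --- for $\xi_n$ this comes from the three-term recurrence (\ref{repnch}) of $(c_n^a)_n$, and for $\zeta_n$ from evaluating the second-order difference equation (\ref{sodech}) of $c_k^{-a}$ at $x=-n-1$, i.e.\ from the self-duality of the Charlier family --- and then pins down the common initial value $\xi_0=\zeta_0$ by a separate induction on $k$. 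Your generating-function computation is more explicit: the identity $\sum_n c_n^a(x)s^n=e^{-as}(1+s)^x$ is immediate from (\ref{defchp}); the evaluation of $T(z)$ via $U'(z)=z^ke^z$ is correct (the constant $(-1)^kk!$ coming from the $x=0$ term is exactly absorbed by the integration constant, giving $T(z)=k!\,e^z\sum_{j=0}^k(-1)^{k-j}z^j/j!$); and the final coefficient extraction together with the matching against $c_k^{-a}(-n-1)$ via $\binom{-n-1}{i}i!=(-1)^i(n+1)_i$ both check out. What your route buys is a self-contained derivation that actually \emph{produces} the polynomial $c_k^{-a}$ rather than verifying a guessed answer; what the paper's route buys is a template (recurrence versus difference equation, i.e.\ duality) that transfers essentially verbatim to the Meixner, Krawtchouk and Hahn analogues in Sections 5--7, where generating functions are less convenient. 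The only point you should make explicit is the interchange of $\sum_n s^n$ with the infinite sum defining $\langle\tilde\rho_{k,a},\cdot\rangle$: for $|s|$ small this is justified by absolute convergence, using the bound $\sum_n|s|^n|c_n^a(x)|\le e^{|a||s|}(1+|s|)^x$ together with the superexponential decay of the masses of $\tilde\rho_{k,a}$.
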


\begin{proof}
Write
\begin{align*}
\xi_n&=\langle \tilde \rho_{k,a},c_{n}^{a}\rangle, \quad n\ge 0,\\
\zeta _{n}&=(-1)^nk!e^ac_{k}^{-a}(-n-1),\quad n\ge 0.
\end{align*}
The three term recurrence relation (\ref{repnch}) for $(c_{n}^a)_n$ gives that
\begin{align}\label{mecc1a}
\xi_{1}+(a+k+1)\xi_{0}-a^{k+1}e^a&=0,\\
\label{mecc1}
(n+1)\xi_{n+1}+(n+a+k+1)\xi_{n}+a\xi_{n-1}&=0, \quad n\ge 1.
\end{align}
We now prove that also
\begin{align}\label{mecc2a}
\zeta_{1}+(a+k+1)\zeta_{0}-a^{k+1}e^a&=0,\\ \label{mecc2}
(n+1)\zeta_{n+1}+(n+a+k+1)\zeta_{n}+a\zeta_{n-1}&=0, \quad n\ge 1.
\end{align}
Indeed, for $n\ge 1$ we have
\begin{align*}
&(n+1)\zeta_{n+1}+(a+n+k+1)\zeta_{n}+a\zeta_{n-1}=(-1)^{n+1}k!e^a\\
&\hspace{1.5cm}\times \left( (n+1)c_{k}^{-a}(-n-2)-(n+a+k+1)c_{k}^{-a}(-n-1)+ac_{k}^{-a}(-n)\right).
\end{align*}
Then, by writing $x=-n-1$ in the second order difference equation (\ref{sodech}) for the Charlier polynomials $(c_{k}^{-a})_k$, we find
$$
(n+1)\zeta_{n+1}+(a+n+k+1)\zeta_{n}+a\zeta_{n-1}=0.
$$
Proceeding in a similar way for $n=0$, and using that $c_k^{-a}(0)=a^k$, we get (\ref{mecc2a}).

Since the sequences $(\xi_n)_n$ and $(\zeta_n)_n$ satisfy the same recurrence relation,
it is enough to prove
that $\xi_0=\zeta_0$. Write then $\eta_0=\langle \rho_a(x+1),1\rangle =e^a$, and
$$
\eta _k=\langle \rho_{a}(x+k+1),(x+1)\cdots (x+k)\rangle , \quad k\ge 1.
$$
From  the definition of $\tilde \rho_{k,a}$ we have $\eta_k=\xi_0$.
Using that $x\rho_a(x)=a\rho_a(x-1)$, we get
\begin{align*}
\eta _k&=\langle \rho_{a}(x+k+1),(x+1)\cdots (x+k)\rangle =\langle \rho_{a}(x+k),x(x+1)\cdots (x+k-1)\rangle \\
&=\langle a\rho_{a}(x+k-1),(x+1)\cdots (x+k-1)\rangle-k\langle \rho_{a}(x+k),(x+1)\cdots (x+k-1)\rangle\\
&=a^{k}\langle \rho_{a},1\rangle -k\eta_{k-1}=a^{k}e^a -k\eta_{k-1}.
\end{align*}
This means that the sequence $\eta _k$, $k\ge 1$, is characterized by the recurrence relation
$$
\eta_k+k\eta_{k-1}=a^{k}e^a, \quad k\ge 1,
$$
with initial condition $\eta _0=e^a$.
The proof will finish if we prove that also the sequence $(e^ak!c_{k}^{-a}(-1))_k$  satisfies the same recursion.
But, by applying $k-1$ times the three term recurrence relation for the Charlier polynomials $(c_{k}^{-a})_k$ (see (\ref{repnch})), we get
$$
c_{k}^{-a}(-1)+c_{k-1}^{-a}(-1)=\frac{a}{k}(c_{k-1}^{-a}(-1)+c_{k-2}^{-a}(-1))
=\frac{a^{k-1}}{k!}(c_{1}^{-a}(-1)+c_{0}^{-a}(-1))=\frac{a^{k}}{k!}.
$$
\end{proof}

We are now ready to prove that the orthogonal polynomials with respect to $\tilde \rho _{k,a}$ (\ref{tchw}) are a particular case
of Theorem \ref{th5.1ch}.

\begin{theorem} \label{lm51}
For $k\ge 1$, let $a$ be a non-null real number satisfying that $c_{k}^{-a}(-n)\not =0$, $n\ge 1$, where $c_{k}^{-a}$, $k\ge 1$, are
Charlier polynomials (see (\ref{defchp})).
We define the sequences of numbers $(\lambda_n)_n$, $(\gamma _n)_{n\ge 1}$ and $(\beta _n)_{n\ge 1}$  by
\begin{align}\label{eigchk}
\lambda _n&=-c_{k+1}^{-a}(-n),\\\label{defdelch}
\gamma_n&=c_{k}^{-a}(-n),\\
\label{defbetch}
\beta_n&=\frac{\gamma_{n+1}}{\gamma_n},\quad n\ge 1,
\end{align}
and the sequence of polynomials $(q_n)_n$ by $q_0=1$, and
\begin{equation}\label{defqch}
q_n(x)=c_{n}^{a}(x)+\beta _nc_{n-1}^{a}(x), \quad n\ge 1.
\end{equation}
Then the polynomials $(q_n)_n$ are orthogonal with respect
to the measure $\tilde \rho $ (\ref{tchw}).
Moreover, consider the difference operator of order $2k+2$ and genre $(-k-1,k+1)$ defined by
$$
D=P_1(D_{a})+\nabla P_2(D_{a}),
$$
where $D_{a}$ is the second order difference operator for the Charlier polynomials (\ref{sodech})
and  $P_1$, $P_2$ are the polynomials of degrees $k+1$ and $k$, respectively, defined by
\begin{align}\label{defp1ch}
P_1(x)&=-c_{k+1}^{-a}(x),\\\label{defp2ch}
P_2(x)&=c_{k}^{-a}(x-1).
\end{align}
Then $D(q_n)=\lambda_nq_n$.
\end{theorem}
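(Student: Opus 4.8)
The statement contains two independent assertions — the eigenvalue equation $D(q_n)=\lambda_n q_n$ and the orthogonality of $(q_n)_n$ with respect to $\tilde \rho$ — and I would dispatch them separately.

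For the eigenvalue equation, the plan is simply to recognize that the data $(P_1,P_2,\gamma_n,\lambda_n,\beta_n)$ of this theorem is an instance of Theorem \ref{th5.1ch} with $P_1(x)=-c_{k+1}^{-a}(x)$. Since $c_{k+1}^{-a}$ has degree $k+1$, so does $P_1$. The only point needing a computation is that the polynomial $P_2$ produced by Theorem \ref{th5.1ch}, namely $P_1(x-1)-P_1(x)=c_{k+1}^{-a}(x)-c_{k+1}^{-a}(x-1)$, agrees with (\ref{defp2ch}). This follows at once from the difference relation (\ref{sdch}) applied to the family $(c_n^{-a})_n$, which gives $c_{k+1}^{-a}(x)-c_{k+1}^{-a}(x-1)=c_k^{-a}(x-1)$; hence $P_2(x)=c_k^{-a}(x-1)$. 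With this identification $\gamma_{n+1}=P_2(-n)=c_k^{-a}(-n-1)$, $\lambda_n=P_1(-n)=-c_{k+1}^{-a}(-n)$ and $\beta_n=\gamma_{n+1}/\gamma_n$ match (\ref{eigchk})--(\ref{defbetch}), while the hypothesis $P_2(-n)\neq0$ ($n\ge0$) is precisely the assumed nonvanishing $c_k^{-a}(-n)\neq0$, $n\ge1$. Theorem \ref{th5.1ch} then delivers $D(q_n)=\lambda_n q_n$ together with the stated order and genre.

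For the orthogonality I would invoke Lemma \ref{addel}. Writing $\lambda=-k-1$, $M=(-1)^k k!$ and $\nu=\sum_{x=0}^\infty \frac{a^{x+k+1}}{(x+k+1)x!}\delta_x$, the definition (\ref{tchw}) reads $\tilde \rho=\nu+M\delta_\lambda$. The auxiliary measure of Lemma \ref{addel} is then $\mu=(x-\lambda)\nu=(x+k+1)\nu=a^{k+1}\rho_a$, whose orthogonal polynomials (up to normalization) are the Charlier polynomials $(c_n^a)_n$, so I take $p_n=c_n^a$. It remains to check that the numbers $\beta_n$ manufactured by Lemma \ref{addel},
$$
\beta_n=-\frac{\alpha_n+Mc_n^a(\lambda)}{\alpha_{n-1}+Mc_{n-1}^a(\lambda)},\qquad \alpha_n=\langle\nu,c_n^a\rangle,
$$
coincide with $\gamma_{n+1}/\gamma_n$.

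The key observation — and the main, though modest, obstacle — is that the numerator $\alpha_n+Mc_n^a(\lambda)$ is nothing but $\langle\nu+M\delta_{-k-1},c_n^a\rangle=\langle\tilde \rho_{k,a},c_n^a\rangle$, which is evaluated in Lemma \ref{chxx} to be $(-1)^n e^a k!\, c_k^{-a}(-n-1)$. Substituting this into the formula for $\beta_n$, the factors $e^a k!$ cancel and the sign collapses via $-(-1)^n/(-1)^{n-1}=+1$, leaving
$$
\beta_n=\frac{c_k^{-a}(-n-1)}{c_k^{-a}(-n)}=\frac{\gamma_{n+1}}{\gamma_n},
$$
exactly (\ref{defbetch}). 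The same evaluation shows that the denominators $\alpha_{n-1}+Mc_{n-1}^a(\lambda)=(-1)^{n-1}e^a k!\, c_k^{-a}(-n)$ are nonzero for $n\ge1$ under the hypothesis $c_k^{-a}(-n)\neq0$, which is precisely the implicit assumption of Lemma \ref{addel}. Hence Lemma \ref{addel} applies and yields the orthogonality of $(q_n)_n$ with respect to $\tilde \rho=\tilde \rho_{k,a}$, completing the proof.
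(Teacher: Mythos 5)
Your proof is correct and follows essentially the same route as the paper: orthogonality via Lemma \ref{addel} combined with the evaluation $\langle\tilde\rho_{k,a},c_n^a\rangle=(-1)^ne^ak!\,c_k^{-a}(-n-1)$ from Lemma \ref{chxx}, and the difference equation by checking $P_1(x-1)-P_1(x)=c_k^{-a}(x-1)$ and invoking Theorem \ref{th5.1ch}. The only (immaterial) difference is that the paper applies Lemma \ref{addel} with $\nu=\tilde\rho_{k,a}$ and $M=0$, whereas you split off the Dirac delta and take $M=(-1)^kk!$; since $(x+k+1)\delta_{-k-1}=0$ both choices yield the same $\mu=a^{k+1}\rho_a$ and the same quotient $\beta_n=\gamma_{n+1}/\gamma_n$.
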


\begin{proof}
In the notation of Lemma \ref{addel}, we have $\lambda =-k-1$, $\nu=\tilde \rho _{k,a}$, $\mu =a^{k+1}\rho _a$, $p_n=c_n^a$  and $M=0$.
Lemma \ref{chxx} gives that $\alpha_n=\langle \nu,p_n\rangle= (-1)^ne^ak!c_{k}^{-a}(-n-1)$. The orthogonality of the polynomials $(q_n)_n$ with
respect to $\tilde \rho _{k,a}$ is now a consequence of Lemma \ref{addel} and (\ref{defdelch}), (\ref{defbetch}).

The second part of the Theorem is a straightforward consequence of the Theorem \ref{th5.1ch}. We have just to check that
$P_2(x)=P_1(x-1)-P_1(x)$:
$$
P_1(x-1)-P_1(x)=c_{k+1}^{-a}(x)-c_{k+1}^{-a}(x-1)=\Delta (c_{k+1}^{-a}(x-1))
=c_{k}^{-a}(x-1)=P_2(x).
$$
\end{proof}

Except for an appropriate choice of the polynomial $P_2$ (as before), the polynomials $(q_n)_n$ (\ref{qnno}) in Theorem \ref{th5.1ch}
seem not to be orthogonal
with respect to a measure. Anyway, in a subsequent paper \cite{du3}, we prove that they enjoy certain orthogonality property which implies that
they satisfy a higher order recurrence relation of the form
$$
(x+1)(x+2)\cdots (x+k_0+1)q_n(x)=\sum_{j=-k_0-1}^{k_0+1}a_{n,j}q_{n+j}(x),
$$
where $k_0\le\deg(P_2)$.

\bigskip

Conjecture 2 in the Introduction for the Charlier weight $\rho _a$
and the finite set $F=\{ 1, 2, \cdots, k\}$ can be deduced as a Corollary of the previous Theorem.

\begin{corollary}\label{jodch}
For a positive integer $k$, write $F=\{ 1, 2, \cdots , k\}$ and $\pp(x)=\prod _{j\in F}(x-j).$ Assume that $a\not =0$ and
that $c_{k}^{-a}(-n)\not =0$, $n\ge 1$. Then,
the orthogonal polynomials with respect to the measure $\pp\rho _a$ are common eigenfunctions of a $(2k+2)$-order difference operator
of genre $(-k-1,k+1)$,
where the measure $\rho_a$ is the Charlier measure (\ref{chw}).
\end{corollary}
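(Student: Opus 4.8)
The plan is to deduce the Corollary from Theorem \ref{lm51} by a single translation of the variable. The crucial observation, already flagged in the Introduction, is that the measure $\pp\rho_a$ is exactly the measure $\tilde\rho_{k,a}$ of (\ref{tchw}) translated by $k+1$, i.e.\ $\pp\rho_a=\tilde\rho_{k,a}(x-k-1)$. So rather than analyse $\pp\rho_a$ directly, I would transfer the results already obtained for $\tilde\rho_{k,a}$ through this shift.

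First I would check the measure identity straight from the definitions. Writing $\pp(x)=(x-1)(x-2)\cdots(x-k)$ and $\rho_a=\sum_{x=0}^\infty\frac{a^x}{x!}\delta_x$, one sees that the factors of $\pp$ annihilate the masses at $x=1,\dots,k$, so $\pp\rho_a$ is supported on $\{0\}\cup\{x\ge k+1\}$, with mass $(-1)^kk!$ at $x=0$ and mass $\frac{a^x}{x(x-k-1)!}$ at $x\ge k+1$. Comparing term by term with the explicit form (\ref{tchw}) of $\tilde\rho_{k,a}$, after moving its support by $k+1$, confirms $\pp\rho_a=\tilde\rho_{k,a}(x-k-1)$. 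This is the routine bookkeeping that supports the whole argument.

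Next I would invoke the two facts about translations recorded in the Preliminaries. By Theorem \ref{lm51}, under the standing hypothesis $c_k^{-a}(-n)\ne 0$ for $n\ge 1$, the polynomials $(q_n)_n$ of (\ref{defqch}) are orthogonal with respect to $\tilde\rho_{k,a}$. Since orthogonality is preserved under translation of the measure, the shifted polynomials $q_n(x-k-1)$ are orthogonal with respect to $\tilde\rho_{k,a}(x-k-1)=\pp\rho_a$; by the uniqueness of orthogonal polynomials these are, up to normalization, exactly the orthogonal polynomials for $\pp\rho_a$, which in particular exist.

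It then remains to transport the difference equation through the shift. Theorem \ref{lm51} gives $D(q_n)=\lambda_nq_n$ for the operator $D=P_1(D_a)+\nabla P_2(D_a)$ of order $2k+2$ and genre $(-k-1,k+1)$. Writing $D=\sum_{l=-k-1}^{k+1}f_l(x)\Sh_l$ and substituting $x\mapsto x-k-1$ in the identity $\sum_l f_l(x)q_n(x+l)=\lambda_nq_n(x)$, one obtains $\tilde D(q_n(x-k-1))=\lambda_nq_n(x-k-1)$ with $\tilde D=\sum_l f_l(x-k-1)\Sh_l$. Translating the coefficients changes neither the set of shifts $\Sh_l$ that occur nor the nonvanishing of the extreme coefficients $f_{\pm(k+1)}$, so $\tilde D$ again has order $2k+2$ and genre $(-k-1,k+1)$, which proves the Corollary. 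The whole argument is essentially mechanical once the measure identity is set up; the only point genuinely needing care is confirming that the shift leaves the order and genre of the difference operator intact, i.e.\ that $f_{k+1}(x-k-1)$ and $f_{-k-1}(x-k-1)$ remain nonzero.
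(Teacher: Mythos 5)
Your proposal is correct and follows exactly the route of the paper's own proof, which simply observes that $\pp\rho_a=\tilde\rho_{k,a}(x-k-1)$ and invokes Theorem \ref{lm51}; you have merely spelled out the bookkeeping (verifying the measure identity mass by mass and checking that conjugating the difference operator by the shift preserves its order and genre) that the paper leaves implicit.
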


\begin{proof}
The proof is an easy consequence of the previous Theorem, taking into account that the measure $\pp\rho_a$ is
equal to the measure $\tilde \rho$ shifted by $k+1$:
$\pp\rho_a=\tilde \rho (x-k-1)$.
\end{proof}

\begin{remark}\label{remch}
The most interesting case in the previous Theorems is when the new measures  $\tilde\rho_{k,a}$ (\ref{tchw}) in Theorem \ref{lm51} and
$\pp \rho_{a}$ in Corollary \ref{jodch} are positive. This obviously happens
when $a>0$ and $k$ is even.
In that case, the hypothesis $c_{k}^{-a}(-n)\not =0$,  $n\ge 1$, in Theorem \ref{lm51} and Corollary
\ref{jodch} always fulfil. Indeed, by definition of  the measure $\tilde \rho_{k,a}$ (\ref{tchw}) we have that
$$
\tilde \rho_{k,a} =(x+1)\cdots(x+k)\rho_a(x+k+1)
$$
where $\rho_a(x+k+1)$ is the Charlier weight (\ref{chw}) shifted by $-k-1$.

That means that the measure $\tilde \rho_{k,a}$ is a Christoffel transform of the measure $\rho_a(x+k+1)$.
Write $\Lambda _n$, $n\ge 0$, for the matrices
\begin{equation}\label{condepch}
\Lambda _n= (c_{n+j-1}^{a}(i))_{i,j=1,\cdots , k}.
\end{equation}
Hence, according to (\ref{mata0}) in the Preliminaries,
the measure $\tilde \rho_{k,a}$ has a sequence $(q_n)_{n}$ of orthogonal polynomials if and only if $\det \Lambda_n\not =0$, $n\ge 0$.
Now for $a>0$ and $k$ even, the measure $\tilde \rho_{k,a}$ is positive with infinitely many points in its support. So it always has
a sequence of orthogonal polynomials,
and then $\det \Lambda_n\not =0$, $n\ge 0$. In \cite{duc} we have found a very nice closed-form expression for the determinant of the
matrices $\Lambda_n$ (\ref{condepch}):
\begin{equation}\label{chconjdet}
\det(\Lambda_n)=(-1)^{kn}a^{(n-1)k}\left( \prod_{j=1}^{k}\frac{j!}{(n+j-1)!}\right) c_{k}^{-a}(-n).
\end{equation}

We can then conclude that the hypothesis
$c_{k}^{-a}(-n)\not =0$, $n\ge 1$, in Theorem \ref{lm51}  and Corollary
\ref{jodch} always fulfil for $a>0$ and $k$ even.

There is a similar formula to (\ref{chconjdet}) for each of the examples studied in Sections 5, 6 and 7 of this paper. Actually,
they seem to be particular cases of a surprising symmetry for certain Casorati determinants associated to
the classical discrete orthogonal polynomials (see \cite{duc}).
\end{remark}

\section{Meixner case}
For $a\not =0, 1$ we write $(m_{n}^{a,c})_n$ for the sequence of Meixner polynomials defined by
\begin{equation}\label{defmep}
m_{n}^{a,c}(x)=(-1)^n\sum _{j=0}^n a^{-j}\binom{x}{j}\binom{-x-c}{n-j}
\end{equation}
(we have taken a slightly different normalization from the one used in \cite{Ch}, pp. 175-7, from where
the next formulas can be easily derived; see also \cite{KLS}, pp, 234-7 or \cite{NSU}, ch. 2).
Meixner polynomials are eigenfunctions of the following second order difference operator
\begin{equation}\label{sodeme}
D_{a,c} =x\Sh_{-1}-[(1+a)x+ac]\Sh_0+a(x+c)\Sh_1,\qquad D_{a,c} (m_{n}^{a,c})=n(a-1)m_{n}^{a,c},\quad n\ge 0.
\end{equation}
When $a\not =0, 1$, they satisfy the following three term recurrence formula ($m_{-1}=0$)
\begin{equation}\label{trme}
xm_n=\frac{a(n+1)}{a-1}m_{n+1}-\frac{(a+1)n+ac}{a-1}m_n+\frac{n+c-1}{a-1}m_{n-1}, \quad n\ge 0
\end{equation}
(to simplify the notation we remove the parameters in some formulas).
Hence, for $a\not =0,1$ and $c\not =0,-1,-2,\cdots $, they are always orthogonal with respect to a moment functional $\rho_{a,c}$, which we
normalize
by taking $\langle \rho_{a,c},1\rangle =\Gamma(c)$. For $0<\vert a\vert<1$ and $c\not =0,-1,-2,\cdots $, we have
\begin{equation}\label{mew}
\rho_{a,c}=(1-a)^c\sum _{x=0}^\infty \frac{a^{x}\Gamma(x+c)}{x!}\delta _x.
\end{equation}
The moment functional $\rho_{a,c}$ can be represented by a positive measure only when $0<a<1$ and $c>0$.

Meixner polynomials satisfy the following identities
\begin{align}\label{sdm}
\Delta (m_{n}^{a,c})&=\frac{a-1}{a}m_{n-1}^{a,c+1},\\ \label{sdm2}
m_{n}^{a,c+1}(x-1)&=\sum _{j=0}^na^{-j}m_{n-j}^{a,c}(x),\\
\label{sdm2b}
m_{n}^{a,c+1}(x)&=\sum _{j=0}^nm_{n-j}^{a,c}(x),\\
\label{sdm3}
\frac{1}{a}m_{n-1}^{a,c}(x)&=m_{n}^{a,c}(x)-m_{n}^{a,c-1}(x+1).
\end{align}

\bigskip
For Meixner polynomials we have found two different $\D$-operators.

\begin{lemma}\label{lTme}
For $a\not =0,1$, consider the Meixner polynomials $(m_{n}^{a,c})_n$ (\ref{defmep}). Then, the operators $\D _i$,
$i=1,2$, defined by (\ref{defTo}) from the sequences ($n\ge 0$)
\begin{align}\label{veme1}
\varepsilon _{n,1}&=-1,\\\label{veme2}
\varepsilon _{n,2}&=-1/a,
\end{align}
are $\D$-operators for $(m_{n}^{a,c})_n$ and the algebra $\A$  (\ref{algdiff}) of difference
operators  with polynomial coefficients.
More precisely
\begin{align}\label{dome1}
\displaystyle \D_1&=\frac{a}{1-a}\Delta ,\\\label{dome2}
\displaystyle \D_2&=\frac{1}{1-a}\nabla .
\end{align}
\end{lemma}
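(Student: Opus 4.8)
The plan is to follow the same three-step scheme used for the Charlier polynomials in Lemma \ref{lTch}: iterate $\zeta$, sum the series defining $\D$ in (\ref{defTo}), and recognize the resulting finite combination of Meixner polynomials as a scalar multiple of $\Delta$ or $\nabla$ applied to $m_n^{a,c}$. Since $\zeta_i$ acts on the basis by $\zeta_i(m_n^{a,c}) = \varepsilon_{n,i}\, m_{n-1}^{a,c}$ and both sequences $\varepsilon_{n,1}=-1$, $\varepsilon_{n,2}=-1/a$ are constant, the iterates are immediate: $\zeta_1^j(m_n^{a,c}) = (-1)^j m_{n-j}^{a,c}$ and $\zeta_2^j(m_n^{a,c}) = (-1)^j a^{-j} m_{n-j}^{a,c}$. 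Substituting into (\ref{defTo}) collapses the alternating signs and gives the explicit expansions $\D_1(m_n^{a,c}) = -\sum_{i=0}^{n-1} m_i^{a,c}$ and $\D_2(m_n^{a,c}) = -\sum_{j=1}^{n} a^{-j} m_{n-j}^{a,c}$.

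For $\D_1$ I would then invoke (\ref{sdm2b}) with $n$ replaced by $n-1$, which collapses the sum $\sum_{i=0}^{n-1} m_i^{a,c}$ into the single polynomial $m_{n-1}^{a,c+1}$; hence $\D_1(m_n^{a,c}) = -m_{n-1}^{a,c+1}$. Comparing with (\ref{sdm}), namely $\Delta(m_n^{a,c}) = \frac{a-1}{a}\, m_{n-1}^{a,c+1}$, immediately yields $\D_1 = \frac{a}{1-a}\Delta$, which is (\ref{dome1}).

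For $\D_2$ the key is to pass from $\Delta$ to $\nabla$ through the identity $\nabla(f)(x) = \Delta(f)(x-1)$, so that by (\ref{sdm}) one has $\nabla(m_n^{a,c})(x) = \frac{a-1}{a}\, m_{n-1}^{a,c+1}(x-1)$. Expanding $m_{n-1}^{a,c+1}(x-1)$ by (\ref{sdm2}) (again with $n$ replaced by $n-1$) and reindexing the resulting sum gives $\nabla(m_n^{a,c}) = (1-a)\D_2(m_n^{a,c})$, i.e. $\D_2 = \frac{1}{1-a}\nabla$, which is (\ref{dome2}). Finally, because $\Delta$ and $\nabla$ are finite order difference operators with constant (hence polynomial) coefficients, both $\D_1$ and $\D_2$ lie in the algebra $\A$ of (\ref{algdiff}), so they are genuine $\D$-operators. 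I expect no serious obstacle here; the only care needed is the bookkeeping of the parameter shift $c \mapsto c+1$ and the argument shift $x \mapsto x-1$ in the $\nabla$ case, together with choosing correctly between the two closely related identities (\ref{sdm2}) and (\ref{sdm2b}).
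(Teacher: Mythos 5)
Your proof is correct and follows essentially the same route as the paper: compute the telescoping series for $\D_i(m_n^{a,c})$ coming from the constant sequences $\varepsilon_{n,i}$ and collapse it via the connection formulas (\ref{sdm}), (\ref{sdm2}), (\ref{sdm2b}). The only differences are cosmetic: by applying (\ref{sdm2b}) at level $n-1$ you identify the sum directly as $-m_{n-1}^{a,c+1}$ and so bypass the paper's intermediate use of (\ref{sdm3}), and you write out the $\D_2$ case (via $\nabla(f)(x)=\Delta(f)(x-1)$ and (\ref{sdm2})), which the paper declares ``similar'' and omits.
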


\begin{proof}
Indeed, the sequence (\ref{veme1}) defines the $\D$-operator (\ref{defTo})
$$
\D_1 (m_{n}^{a,c})=\sum_{j=1}^\infty (-1)^{j+1}\zeta ^j(m_{n}^{a,c})=\sum_{j=1}^n (-1)^{j+1}(-1)^jm_{n-j}^{a,c}
=m_{n}^{a,c}(x)-\sum_{j=0}^n m_{n-j}^{a,c}.
$$
Using (\ref{sdm2b}), (\ref{sdm3}) and (\ref{sdm}), we get
\begin{align*}
\D_1 (m_{n}^{a,c})&=m_{n}^{a,c}(x)-m_{n}^{a,c+1}(x)=m_{n}^{a,c}(x)-\frac{1}{a}m_{n-1}^{a,c+1}(x)
-m_{n}^{a,c}(x+1)\\& =\frac{a}{1-a}\Delta(m_{n}^{a,c}).
\end{align*}
The proof for $\D_2$ is similar and it is omitted.

\end{proof}

Each one of the $\D$-operators displayed in the previous Lemma together with Lemma \ref{fl1v} gives rise to the corresponding
class of polynomials satisfying higher order difference equations.

\begin{theorem}\label{th5.1me1} Let $P_1$ be a polynomial of degree $k+1$, $k\ge 1$,
and write $P_2(x)=P_1(x+a-1)-P_1(x)$ (so that $P_2$ is a polynomial of degree $k$). We assume that $P_2(n(a-1))\not =0$, $n\ge 0$, and
define the sequences of numbers
\begin{align}\nonumber
\gamma_{n+1}&=P_2(n(a-1)),\quad n\ge 0,\\\nonumber
\lambda_n&=P_1(n(a-1)),\quad n\ge 0,\\\label{beme}
\beta_{n,1}&=-\frac{\gamma_{n+1}}{\gamma_n}\quad n\ge 1,\\
\beta_{n,2}&=-\frac{1}{a}\frac{\gamma_{n+1}}{\gamma_n}\quad n\ge 1,
\end{align}
and the sequence of polynomials $q_{0,i}=1$, and for $n\ge 1$
\begin{equation}\label{qnme}
q_{n,i}(x)=m_{n}^{a,c}(x)+\beta_{n,i}m_{n-1}^{a,c}(x),\quad i=1,2,
\end{equation}
where $m_{n}^{a,c}$ is the Meixner polynomial defined by (\ref{defmep}) ($a\not =0, 1$).
Consider the second order difference operator $D_{a,c}$ (\ref{sodeme}) with respect to which
the Meixner polynomials are eigenfunctions. Write finally $D_i$, $i=1,2$, for the difference operators of order $2k+2$ and genre $(-k-1,k+1)$
\begin{align*}
D_1&=P_1(D_{a,c})+\frac{a}{1-a}\Delta P_2(D_{a,c}),\\
D_2&=P_1(D_{a,c})+\frac{1}{1-a}\nabla P_2(D_{a,c}).
\end{align*}
Then $D_i(q_{n,i})=\lambda_nq_{n,i}$, $n\ge 0$, $i=1,2$.
\end{theorem}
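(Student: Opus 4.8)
The plan is to recognize Theorem~\ref{th5.1me1} as a direct application of Lemma~\ref{fl1v}, exactly as Theorem~\ref{th5.1ch} was for the Charlier case. First I would identify the main characters required by the Lemma. The sequence of polynomials is $(m_{n}^{a,c})_n$, the operator is $D_p=D_{a,c}$ from (\ref{sodeme}), and the eigenvalues are $\theta_n=n(a-1)$, which are linear in $n$. Lemma~\ref{lTme} supplies the two $\D$-operators: for $i=1$ the sequence $\varepsilon_{n,1}=-1$ gives $\D_1=\tfrac{a}{1-a}\Delta$, and for $i=2$ the sequence $\varepsilon_{n,2}=-1/a$ gives $\D_2=\tfrac{1}{1-a}\nabla$. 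Both lie in the algebra $\A$ of (\ref{algdiff}), so the hypotheses of Lemma~\ref{fl1v} concerning the $\D$-operator are met for each $i$.

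Next I would verify the relation $\lambda_n-\lambda_{n-1}=\gamma_n$ demanded by (\ref{defjg}). Here $\gamma_{n+1}=P_2(\theta_n)=P_2(n(a-1))$ and $\lambda_n=P_1(\theta_n)=P_1(n(a-1))$, so that $\gamma_n=P_2((n-1)(a-1))$. Using the defining relation $P_2(x)=P_1(x+a-1)-P_1(x)$, I would substitute $x=(n-1)(a-1)$ to obtain
\[
\gamma_n=P_2((n-1)(a-1))=P_1(n(a-1))-P_1((n-1)(a-1))=\lambda_n-\lambda_{n-1}.
\]
This is precisely the compatibility condition (\ref{defjg}), and it shows $P_1$ is the required polynomial with $\lambda_n=P_1(\theta_n)$. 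With $\beta_{n,i}=\varepsilon_{n,i}\gamma_{n+1}/\gamma_n$, the choices $\varepsilon_{n,1}=-1$ and $\varepsilon_{n,2}=-1/a$ reproduce exactly the formulas (\ref{beme}) for $\beta_{n,1}$ and $\beta_{n,2}$. Lemma~\ref{fl1v} then yields $D_i(q_{n,i})=\lambda_n q_{n,i}$ with $D_i=P_1(D_{a,c})+\D_i P_2(D_{a,c})$, matching the stated operators once the explicit forms of $\D_1,\D_2$ are inserted.

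Finally I would confirm the claimed order and genre. Since $D_{a,c}$ has genre $(-1,1)$ (coefficients of $\Sh_{-1}$ and $\Sh_{1}$ nonzero), and $P_1$, $P_2$ have degrees $k+1$ and $k$, the operators $P_1(D_{a,c})$ and $P_2(D_{a,c})$ have genre $(-k-1,k+1)$ and $(-k,k)$ respectively. Composing $P_2(D_{a,c})$ with $\Delta=\Sh_1-\Sh_0$ (for $D_1$) or $\nabla=\Sh_0-\Sh_{-1}$ (for $D_2$) shifts the genre by one unit on the appropriate side, so each $D_i$ has genre $(-k-1,k+1)$. I would check, as in the Charlier proof, that the coefficients of the extreme shifts $\Sh_{\pm(k+1)}$ are nonzero by tracking the leading coefficients of $P_1$, $P_2$ together with $f_{-1}=x$ and $f_1=a(x+c)$ from (\ref{sodeme}); this confirms the order is exactly $2k+2$.

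The bulk of the argument is routine bookkeeping once the dictionary with Lemma~\ref{fl1v} is set up. The only point requiring genuine care is the genre computation for $i=2$: the factor $\nabla$ extends the operator on the $\Sh_{-1}$ side, and I must ensure the coefficient of $\Sh_{-k-1}$ does not vanish, i.e.\ that the contributions from $f_{-1}(x)=x$ iterated through the composition and the leading terms of $P_1,P_2$ do not cancel. Establishing this non-vanishing (for generic parameters $a\neq 0,1$) is the main obstacle, though it reduces to a short explicit computation of the extreme coefficients analogous to the one carried out in the Charlier case.
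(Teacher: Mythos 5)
Your proposal is correct and follows exactly the route the paper takes: the paper's proof of Theorem \ref{th5.1me1} is a one-line reference stating that it is proved as Theorem \ref{th5.1ch} was, using Lemma \ref{fl1v} and the $\D$-operators of Lemma \ref{lTme}, and your identification of $\theta_n=n(a-1)$, the verification of $\lambda_n-\lambda_{n-1}=\gamma_n$ from $P_2(x)=P_1(x+a-1)-P_1(x)$, and the genre computation with $f_{-1}=x$, $f_1=a(x+c)$ are precisely the steps the paper leaves implicit. You in fact supply more detail than the published proof does.
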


\begin{proof}

The theorem can be proved as Theorem \ref{th5.1ch} using Lemma \ref{fl1v} and the $\D$-operators in Lemma \ref{lTme}.

\end{proof}

\subsection{Meixner I}
For a convenient choice of the polynomial $P_1$, the polynomials $(q_{n,1})_n$ in the previous Theorem turn out to be orthogonal with respect
to a moment functional. Indeed, for $k$ a positive integer, $a\not =0,1$ and $c\not =k+1, k, ,k-1, \cdots$,
we consider the moment functional
\begin{equation}\label{tmew2}
\tilde \rho_{k,a,c}=(x+c-1)\cdots (x+c-k)\rho_{a,c-k-1}.
\end{equation}
For $0<\vert a\vert <1$ and $c\not =k+1,k,\cdots $ the moment funcional
$\tilde \rho_{k,a,c}$ can be represented by the measure
\begin{equation}\label{tmewb2}
\tilde \rho_{k,a,c}=(1-a)^{c-k-1}\sum _{x=0}^\infty \frac{a^{x}\Gamma(x+c)}{(x+c-k-1)x!}\delta _x.
\end{equation}
To simplify the notation we will sometimes remove the dependence of $a,c$ and $k$ and write $\tilde \rho =\tilde \rho_{k,a,c}.$

The following Lemma will be needed to find the orthogonal polynomials with respect to $\tilde \rho_{k,a,c}$.

\begin{lemma}\label{lme1x} For $a\not =0,1$, $c\not =k+1,k,\cdots$ and $n\ge 0$, we have
\begin{equation}\label{me1x+k+1}
\langle \tilde \rho_{k,a,c},m_{n}^{a,c}\rangle =\frac{(-1)^kk!\Gamma (c-k-1)m_{k}^{1/a,-c+2}(-n-1)}{(1-a)^{k}}.
\end{equation}
\end{lemma}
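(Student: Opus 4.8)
The plan is to follow the template of Lemma~\ref{chxx} verbatim, replacing the Charlier data by their Meixner counterparts. Write $\xi_n=\langle\tilde\rho_{k,a,c},m_n^{a,c}\rangle$ for the quantity to be computed and $\zeta_n=\frac{(-1)^kk!\Gamma(c-k-1)}{(1-a)^k}m_k^{1/a,-c+2}(-n-1)$ for the claimed value. Both are sequences indexed by $n$, and I would show they satisfy one and the same three-term recurrence in $n$ (homogeneous for $n\ge1$, together with an inhomogeneous relation tying $\xi_1$ to $\xi_0$). Since such data determine the sequence from a single initial value, the lemma reduces to checking that $\xi_0=\zeta_0$.

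For the sequence $\zeta_n$ the recurrence comes for free from the second order difference equation~(\ref{sodeme}) applied to the dual family $m_k^{1/a,-c+2}$. Substituting $b=1/a$, $d=-c+2$ into $D_{b,d}(m_k^{b,d})=k(b-1)m_k^{b,d}$ and evaluating the resulting identity at $x=-n-1$ turns the three shifts $\Sh_{-1},\Sh_0,\Sh_1$ into the values $m_k^{1/a,-c+2}(-n-2)$, $m_k^{1/a,-c+2}(-n-1)$, $m_k^{1/a,-c+2}(-n)$, i.e. into $\zeta_{n+1},\zeta_n,\zeta_{n-1}$. After clearing the factor $1/a$ this yields precisely a recurrence of the form $a(n+1)\zeta_{n+1}-[(a+1)n+a(k+1)+c-k-1]\zeta_n+(n+c-1)\zeta_{n-1}=0$.

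For the sequence $\xi_n$ the recurrence is obtained from the Meixner three-term recurrence~(\ref{trme}): multiplying it by $a-1$ and pairing against $\tilde\rho_{k,a,c}$ gives $a(n+1)\xi_{n+1}-[(a+1)n+ac]\xi_n+(n+c-1)\xi_{n-1}=(a-1)\langle\tilde\rho_{k,a,c},xm_n^{a,c}\rangle$. The crux of the whole argument is therefore the structural identity $\langle\tilde\rho_{k,a,c},xm_n^{a,c}\rangle=-(c-k-1)\xi_n$ for $n\ge1$ (the analogue of the implicit Charlier identity $\langle\tilde\rho,xc_n^a\rangle=-(k+1)\xi_n$). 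I would prove it by writing $x=(x+c-k-1)-(c-k-1)$ and treating the term $(x+c-k-1)\tilde\rho_{k,a,c}$ with the Pearson relation for the Meixner weight $x\rho_{a,c'}=a(x+c'-1)\rho_{a,c'}(x-1)$ (equivalently $(x+1)\rho_{a,c'}(x+1)=a(x+c')\rho_{a,c'}$) with $c'=c-k-1$; a telescoping of the $\Gamma$-product $(x+c-2)\cdots(x+c-k-1)\Gamma(x+c-k-1)=\Gamma(x+c-1)$ then collapses $(x+c-k-1)\tilde\rho_{k,a,c}$ into a constant multiple of the shifted Meixner functional $\rho_{a,c}$, against which $m_n^{a,c}$ integrates to zero for $n\ge1$ by orthogonality. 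Substituting this identity into the displayed recurrence for $\xi_n$ cancels the $\langle\tilde\rho,xm_n\rangle$ term and produces exactly the $\zeta_n$ recurrence above; this cancellation, in which the specific Christoffel factor $(x+c-1)\cdots(x+c-k)$ and the dual parameters $1/a,\,-c+2$ must conspire, is the step I expect to require the most care.

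Finally I would match the initial value $\xi_0=\langle\rho_{a,c-k-1},(x+c-1)\cdots(x+c-k)\rangle$ with $\zeta_0=\frac{(-1)^kk!\Gamma(c-k-1)}{(1-a)^k}m_k^{1/a,-c+2}(-1)$. As in the Charlier case this is a separate, purely computational check: one shows that both sides, viewed as functions of $k$, obey the same first order recursion in $k$ (for $\xi_0$ by repeatedly applying the Pearson relation to lower the degree of the Christoffel factor, for $\zeta_0$ by iterating the three-term recurrence~(\ref{trme}) for $m_k^{1/a,-c+2}$ evaluated at $-1$) and agree at $k=0$.
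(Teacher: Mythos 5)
Your proposal is correct and follows essentially the same route as the paper's proof: the same two sequences $\xi_n$ and $\zeta_n$, the recurrence for $\xi_n$ obtained from (\ref{trme}) together with the collapse $(x+c-k-1)\tilde\rho_{k,a,c}=\rho_{a,c}/(1-a)^{k+1}$, the recurrence for $\zeta_n$ obtained by putting $x=-n-1$ in (\ref{sodeme}) for $m_k^{1/a,-c+2}$, and the reduction to $\xi_0=\zeta_0$ settled by a first order recursion in $k$. The only organizational difference is that the paper makes the $n=0$ relation homogeneous by introducing the artificial values $\xi_{-1}=\zeta_{-1}=\Gamma(c-1)/(1-a)^k$ and matching two initial values, which is equivalent to your inhomogeneous relation tying $\xi_1$ to $\xi_0$. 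One small correction to your last step: the recursion for the initial values shifts $c$ to $c-1$ along with $k$ to $k-1$, so it compares $m_k^{1/a,-c+2}(-1)$ with $m_{k-1}^{1/a,-c+3}(-1)$; the three-term recurrence (\ref{trme}) keeps the parameter $c$ fixed and cannot produce this, and the paper instead uses the contiguity relation $\Delta(m_n^{a,c})=\frac{a-1}{a}m_{n-1}^{a,c+1}$ of (\ref{sdm}) (unlike the Charlier case, where the parameter does not move and iterating the three-term recurrence suffices).
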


\begin{proof}
We proceed in an analogous way to Lemma \ref{chxx}.
Write
\begin{align*}
\xi_n&=\langle \tilde \rho_{k,a,c},m_{n}^{a,c}\rangle , \quad n\ge 0, \quad \xi_{-1}=\frac{\Gamma(c-1)}{(1-a)^k},\\
\zeta_n&=\frac{(-1)^kk!\Gamma (c-k-1)m_{k}^{1/a,-c+2}(-n-1)}{(1-a)^{k}},\quad n\ge -1.
\end{align*}
The three term recurrence relation (\ref{trme}) for $(m_{n}^{a,c})_n$ gives that
\begin{equation}\label{mecme1}
a_n\xi_{n+1}+(b_n+c-k-1)\xi_{n}+c_n\xi_{n-1}=0, \quad n\ge 1,
\end{equation}
where $a_n$, $b_n$ and $c_n$ are the recurrence coefficients of the Meixner polynomials $(m_{n}^{a,c})_n$.
Using that $(x+c)\rho_{a,c}=\rho_{a,c+1}/(1-a)$, the normalization for the Meixner weight and the value of $\xi_{-1}$,
we also have (\ref{mecme1}) for $n=0$.

We now prove that also
\begin{equation}\label{mecme2}
a_n\zeta_{n+1}+(b_n+c-k-1)\zeta_{n}+c_n\zeta_{n-1}=0, \quad n\ge 0.
\end{equation}
Taking into account the definition of $\zeta _n$, this is equivalent to
\begin{align*}
a(n+1)m_{k}^{1/a,-c+2}&(-n-2)-((a+1)(n+1)-k(1-a)+c-2))m_{k}^{1/a,-c+2}(-n-1)\\
&\quad\quad \quad +(n+c-1)m_{k}^{1/a,-c+2}(-n)=0, \quad n\ge 0.
\end{align*}
But this follows straightforwardly by writing  $x=-n-1$ in the second order difference equation for the Meixner polynomials
$(m_{k}^{1/a,-c+2})_k$ (see (\ref{sodeme})).

Since the sequences $(\xi_n)_n$ and $(\zeta_n)_n$ satisfy the same recurrence relation ((\ref{mecme1}), (\ref{mecme2})), it is enough to prove
that $\xi_{-1}=\zeta_{-1}$ and $\xi_0=\zeta_0$.

A straightforward computation from (\ref{defmep}) shows that $\xi_{-1}=\zeta_{-1}$.

In order to prove that also $\xi_0=\zeta_0$, we proceed as follows.
Write
$$
\eta _{k,c}=\langle \rho_{a,c-k-1},(x+c-1)\cdots (x+c-k)\rangle.
$$
From  the definition of $\tilde \rho_{k,a,c}$ we have $\eta_{k,c}=\xi_0$.
Using that $(x+c)\rho_{a,c}=\rho_{a,c+1}/(1-a)$, we get
\begin{align*}
\eta _{k,c}&=\langle \rho_{a,c-k-1},(x+c-1)\cdots (x+c-k)\rangle \\
&= \frac{\langle\rho_{a,c-k},(x+c-2)\cdots (x+c-k)\rangle}{1-a}+k\langle \rho_{a,c-k-1},(x+c-2)\cdots (x+c-k)\rangle\\
&=\frac{\langle \rho_{a,c-1},1\rangle}{(1-a)^k} +k\eta_{k-1,c-1}=k\eta_{k-1,c-1}+\frac{\Gamma(c-1)}{(1-a)^{k}}.
\end{align*}
On the other hand, if we write
$$
\tau _{k,c}=\zeta_0=\frac{(-1)^kk!\Gamma (c-k-1)m_{k}^{1/a,-c+2}(-1)}{(1-a)^{k}},
$$
we have using (\ref{sdm}) and (\ref{defmep})
\begin{align*}
\tau_{k,c}-k\tau_{k-1,c-1}&=\frac{(-1)^kk!\Gamma(c-k-1)}{(1-a)^k}(m_{k}^{1/a,-c+2}(-1)+(1-a)m_{k-1}^{1/a,-c+3}(-1))\\
&=\frac{(-1)^kk!\Gamma(c-k-1)}{(1-a)^k}m_{k}^{1/a,-c+2}(0)=\frac{\Gamma(c-1)}{(1-a)^k}.
\end{align*}
Proceeding by induction on $k$ it is now easy to prove that $\eta_{k,c}=\tau_{k,c}$, and then $\xi_0=\zeta_0$.

\end{proof}

We are now ready to prove that the orthogonal polynomials with respect to $\tilde \rho _{k,a,c}$ (\ref{tmew2}) are a particular case
of Theorem \ref{th5.1me1}.

\begin{theorem} \label{lm62}
For $k\ge 1$, let $a$ and $c$ be real numbers satisfying that $a\not =0,1$, $c\not =k+1, k, k-1,\cdots $
and $m_{k}^{1/a,-c+2}(-n)\not =0$, $n\ge 1$, where $m_{k}^{1/a,-c+2}$, $k\ge 1$, are Meixner polynomials (see (\ref{defmep})).
We define the sequences of numbers $(\lambda _n)_n$, $(\gamma _n)_{n\ge 1}$ and $(\beta _n)_{n\ge 1}$ by
\begin{align}\nonumber
\lambda _n&=\frac{1}{a-1}m_{k+1}^{1/a,-c+1}(-n),\\
\label{defdelme2}
\gamma_n&=m_{k}^{1/a,-c+2}(-n),\\
\label{defbetme2}
\beta_n&=-\frac{\gamma_{n+1}}{\gamma_n},
\end{align}
and the sequence of polynomials $(q_n)_n$ by $q_0=1$, and
\begin{equation}\label{defqme2}
q_n(x)=m_{n}^{a,c}(x)+\beta _nm_{n-1}^{a,c}(x), \quad n\ge 1.
\end{equation}
Then the polynomials $(q_n)_n$ are orthogonal with respect
to the moment functional $\tilde \rho_{k,a,c} $ (\ref{tmew2}).
Moreover, consider the difference operator of order $2k+2$ and genre $(-k-1,k+1)$ defined by
$$
D=P_1(D_{a,c})+\frac{a}{1-a}\nabla P_2(D_{a,c}),
$$
where $D_{a,c}$ is the second order difference operator for the Meixner polynomials (\ref{sodeme})
and $P_1$ and $P_2$ are the polynomials of degrees $k+1$ and $k$, respectively, defined by
\begin{align*}
P_1(x)&=\frac{1}{a-1}m_{k+1}^{1/a,-c+1}\left( -\frac{x}{a-1}\right) ,\\
P_2(x)&=m_{k}^{1/a,-c+2}\left( -\frac{x}{a-1}-1\right) .
\end{align*}
Then $D(q_n)=\lambda_nq_n$.
\end{theorem}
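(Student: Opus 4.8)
The plan is to separate the statement into its two assertions — orthogonality of the $(q_n)_n$ with respect to $\tilde\rho_{k,a,c}$, and the eigenvalue equation $D(q_n)=\lambda_nq_n$ — and to deduce each from machinery already in place, mirroring the Charlier argument of Theorem \ref{lm51}.

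For the orthogonality, I would cast the situation into the framework of Lemma \ref{addel}, taking $\nu=\tilde\rho_{k,a,c}$, $\lambda=k+1-c$, $p_n=m_n^{a,c}$ and $M=0$ (so $\tilde\rho=\nu$). The point to verify is that $\mu:=(x-\lambda)\nu=(x+c-k-1)\tilde\rho_{k,a,c}$ is a nonzero multiple of the Meixner functional $\rho_{a,c}$, so that the $m_n^{a,c}$ are genuinely orthogonal with respect to $\mu$ as the lemma demands. This is a short computation at the level of moment functionals: starting from $\tilde\rho_{k,a,c}=(x+c-1)\cdots(x+c-k)\rho_{a,c-k-1}$ in (\ref{tmew2}) and applying the identity $(x+c')\rho_{a,c'}=\rho_{a,c'+1}/(1-a)$ (already used in the proof of Lemma \ref{lme1x}) a total of $k+1$ times, one turns $(x+c-k-1)(x+c-k)\cdots(x+c-1)\rho_{a,c-k-1}$ into $\rho_{a,c}/(1-a)^{k+1}$; since the extra factor $(x+c-k-1)$ is exactly $x-\lambda$, this gives $\mu=\rho_{a,c}/(1-a)^{k+1}$.

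With this in hand, Lemma \ref{lme1x} supplies $\alpha_n=\langle\nu,m_n^{a,c}\rangle$ up to a factor independent of $n$, namely $\alpha_n$ is proportional to $m_k^{1/a,-c+2}(-n-1)=\gamma_{n+1}$ by (\ref{defdelme2}); the hypothesis $m_k^{1/a,-c+2}(-n)\not=0$ guarantees $\gamma_n\not=0$, equivalently $\alpha_{n-1}\not=0$, so Lemma \ref{addel} applies. Because $M=0$, the coefficient produced by Lemma \ref{addel} is $\beta_n=-\alpha_n/\alpha_{n-1}=-\gamma_{n+1}/\gamma_n$, which is precisely the $\beta_n$ of (\ref{defbetme2}); hence $q_n=m_n^{a,c}+\beta_nm_{n-1}^{a,c}$ is orthogonal with respect to $\tilde\rho_{k,a,c}$. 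For the eigenvalue equation I would invoke Theorem \ref{th5.1me1} in its case $i=1$ (the one attached to $\D_1=\frac{a}{1-a}\Delta$, since $\beta_n=\beta_{n,1}$). The eigenvalues of the Meixner polynomials under $D_{a,c}$ are $\theta_n=n(a-1)$, so the reparametrization $\theta\mapsto-\theta/(a-1)$ sends $\theta_n$ to $-n$, and under it the stated $P_1(x)=\frac1{a-1}m_{k+1}^{1/a,-c+1}(-x/(a-1))$ and $P_2(x)=m_k^{1/a,-c+2}(-x/(a-1)-1)$ reproduce exactly $\lambda_n=P_1(\theta_n)$ and $\gamma_{n+1}=P_2(\theta_n)$. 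The only genuine hypothesis of Theorem \ref{th5.1me1} left to check is $P_2(x)=P_1(x+a-1)-P_1(x)$: writing $y=-x/(a-1)$, the right-hand side equals $\frac1{a-1}\bigl[m_{k+1}^{1/a,-c+1}(y-1)-m_{k+1}^{1/a,-c+1}(y)\bigr]$, and the difference relation (\ref{sdm}) for parameter $1/a$, which reads $\Delta m_{k+1}^{1/a,-c+1}=(1-a)m_k^{1/a,-c+2}$, combined with $\nabla f(y)=\Delta f(y-1)$ and the cancellation $-(1-a)/(a-1)=1$, collapses this to $m_k^{1/a,-c+2}(y-1)=P_2(x)$.

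The argument is essentially bookkeeping once Lemma \ref{lme1x} (the moment computation) and Theorem \ref{th5.1me1} (the general difference-operator construction) are granted. The step needing most care is the identification $\mu\propto\rho_{a,c}$, which must be carried out using the algebraic identity $(x+c')\rho_{a,c'}=\rho_{a,c'+1}/(1-a)$ rather than any positivity, since $\tilde\rho_{k,a,c}$ is in general only a moment functional; a secondary source of friction is keeping the two reparametrizations — $x\mapsto-x/(a-1)$ inside $P_1,P_2$ and the shift in the Meixner difference relation for parameter $1/a$ — mutually consistent.
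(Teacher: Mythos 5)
Your proposal is correct and follows essentially the same route as the paper: the author's own proof is a one-line reference to the Charlier analogue (Theorem \ref{lm51}), whose argument is precisely your combination of Lemma \ref{addel} with $M=0$ and $\mu=(x+c-k-1)\tilde\rho_{k,a,c}\propto\rho_{a,c}$, the moment computation of Lemma \ref{lme1x}, and Theorem \ref{th5.1me1} together with the verification $P_2(x)=P_1(x+a-1)-P_1(x)$. Note that your (correct) identification of the relevant case as $i=1$, the one attached to $\D_1=\frac{a}{1-a}\Delta$ since $\beta_n=\beta_{n,1}$, indicates that the $\nabla$ in the displayed operator of the theorem's statement should read $\Delta$.
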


\begin{proof}
The Theorem can be proved in an analogous way to Theorem \ref{lm51}.
\end{proof}

Conjecture 1 in the Introduction for the Meixner weight $\rho _{a,c}$
and the finite set $F=\{ 1, 2, \cdots, k\}$ can be deduced as a Corollary of the previous Theorem:

\begin{corollary} \label{lm623} For a positive integer $k$, write $F=\{ 1, 2, \cdots , k\}$ and $\ps(x)=\prod _{j\in F}(x+c+j).$
Assume that $a\not =0,1$,
$c\not =0,-1,-2, \cdots$ and that
$$
m_{k}^{1/a,-c-k+1}(-n)\not =0, \quad n\ge 1.
$$
Then
the orthogonal polynomials with respect to the measure $\ps\rho _{a,c}$ are common eigenfunctions of a $(2k+2)$-order difference operator
of genre $(-k-1,k+1)$,
where $\rho_{a,c}$ is the moment functional for the Meixner polynomials.
\end{corollary}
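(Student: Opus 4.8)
The plan is to deduce this Corollary from Theorem~\ref{lm62} in exactly the same spirit in which Corollary~\ref{jodch} was deduced from Theorem~\ref{lm51} in the Charlier case. The central observation is that the Christoffel transform $\ps\rho_{a,c}$ considered here is \emph{literally} the moment functional $\tilde\rho_{k,a,c}$ of (\ref{tmew2}) after the single parameter substitution $c\mapsto c+k+1$; unlike the Charlier situation, where $\pp\rho_a$ turned out to be a \emph{translate} of $\tilde\rho_{k,a}$, here the whole transformation is absorbed by changing the parameter $c$ alone.

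First I would verify this identification directly from the definition (\ref{tmew2}). Setting $c'=c+k+1$ one has $c'-j=c+k+1-j$ for $1\le j\le k$ and $c'-k-1=c$, so that
\begin{align*}
\tilde\rho_{k,a,c'}&=(x+c'-1)(x+c'-2)\cdots(x+c'-k)\,\rho_{a,c'-k-1}\\
&=(x+c+k)(x+c+k-1)\cdots(x+c+1)\,\rho_{a,c}=\ps\rho_{a,c},
\end{align*}
since $\prod_{j=1}^k(x+c+j)=\ps(x)$. Hence the two moment functionals coincide, and with them their orthogonal polynomials and any associated difference operator. Next I would check that the hypotheses of Theorem~\ref{lm62} written for the parameter $c'$ are precisely those stated in the Corollary for $c$: the condition $a\ne 0,1$ is untouched; $c'\ne k+1,k,k-1,\dots$ is equivalent to $c\ne 0,-1,-2,\dots$, which is exactly the requirement that $\rho_{a,c}$ possess a sequence of orthogonal polynomials; and, since $-c'+2=-c-k+1$, the nonvanishing assumption $m_k^{1/a,-c'+2}(-n)\ne 0$, $n\ge 1$, becomes the Corollary's hypothesis $m_k^{1/a,-c-k+1}(-n)\ne 0$, $n\ge 1$.

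With the measures identified and the hypotheses matched, the conclusion is immediate: applying Theorem~\ref{lm62} with $c'$ in place of $c$ yields a sequence $(q_n)_n$ orthogonal with respect to $\tilde\rho_{k,a,c'}=\ps\rho_{a,c}$, together with the explicit operator $D=P_1(D_{a,c'})+\frac{a}{1-a}\nabla P_2(D_{a,c'})$ of order $2k+2$ and genre $(-k-1,k+1)$ satisfying $D(q_n)=\lambda_nq_n$. By the uniqueness (up to scalars) of orthogonal polynomials recalled in the Preliminaries, these $(q_n)_n$ are the orthogonal polynomials of $\ps\rho_{a,c}$, which proves the Corollary. There is no serious obstacle here; the only point demanding care is the bookkeeping of the parameter substitution --- in particular confirming that, in contrast to the Charlier case, no translation of the variable is required once $c$ is shifted to $c+k+1$.
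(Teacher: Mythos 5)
Your proof is correct and follows exactly the paper's argument: the paper's own proof is the one-line observation that $\ps\rho_{a,c}=\tilde\rho_{k,a,c+k+1}$, after which Theorem~\ref{lm62} applies verbatim with $c$ replaced by $c+k+1$. Your verification of the measure identity and of the translation of the hypotheses (in particular $-c'+2=-c-k+1$) is accurate and merely spells out the details the paper leaves implicit.
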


\begin{proof}
The proof is an easy consequence of the previous Theorem, taking into account that the measure $\ps\rho_{a,c}$
is equal to the measure $\tilde \rho _{k,a,c+k+1}$.
\end{proof}

\subsection{Meixner II}
There is other choice of the polynomial $P_1$ for which the polynomials $(q_{n,2})_n$ in Theorem \ref{th5.1me1} are also orthogonal
with respect to a moment functional. Indeed, for $k$ a positive integer, $a\not =0,1$ and $c\not =k+1,k,\cdots $ let $\tilde \rho_{k,a,c}$ be the moment
functional defined by
\begin{equation}\label{tmew}
\tilde \rho_{k,a,c}=(x+1)\cdots (x+k)\rho_{a,c-k-1}(x+k+1).
\end{equation}
In particular, for $0<\vert a\vert <1$ and $c\not = k+1,k,\cdots $ the moment funcional
$\tilde \rho_{k,a,c}$ can be represented by the measure
\begin{equation}\label{tmewb}
\tilde \rho_{k,a,c}=(1-a)^{c-k-1}\left((-1)^kk!\Gamma(c-k-1)\delta_{-k-1}+\sum _{x=0}^\infty \frac{a^{x+k+1}\Gamma(x+c)}{(x+k+1)x!}\delta _x \right) .
\end{equation}
To simplify the notation we remove the dependence of $a,c$ and $k$ and write $\tilde \rho =\tilde \rho_{k,a,c}$
(the proofs of the following theorems are similar to the ones in the previous Sections and are omitted).

\begin{lemma} For $a\not =0,1$, $c\not =k+1,k,\cdots$ and $n\ge 0$, we have
$$
\langle \tilde \rho_{k,a,c},m_{n}^{a,c}\rangle =\frac{(-1)^kk!\Gamma (c-k-1)m_{k}^{a,-c+2}(-n-1)}{a^{n-k}(1-a)^{k}}.
$$
\end{lemma}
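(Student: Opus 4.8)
The plan is to reproduce, in this shifted setting, the argument used for the Charlier measure in Lemma~\ref{chxx} and for Meixner~I in Lemma~\ref{lme1x}: introduce the two sequences
$$
\xi_n=\langle \tilde\rho_{k,a,c},m_n^{a,c}\rangle,\qquad
\zeta_n=\frac{(-1)^kk!\Gamma(c-k-1)m_k^{a,-c+2}(-n-1)}{a^{n-k}(1-a)^k},
$$
show that both satisfy one and the same three-term recurrence, and then match initial values. Since $\tilde\rho_{k,a,c}=(x+1)\cdots(x+k)\rho_{a,c-k-1}(x+k+1)$ combines a Christoffel factor with a shift by $k+1$, the relevant model is the Charlier case of Lemma~\ref{chxx} (shift plus Christoffel by $(x+1)\cdots(x+k)$) rather than the pure Christoffel transform of Meixner~I; I would mirror that argument, with the $a^{-n}$ factor playing the decisive extra role.

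First I would establish the recurrence for $(\xi_n)_n$. Applying the functional $\tilde\rho_{k,a,c}$ to the Meixner three-term recurrence~(\ref{trme}) for $(m_n^{a,c})_n$ and using the Pearson relation $(x+c)\rho_{a,c}=\rho_{a,c+1}/(1-a)$ together with the normalization $\langle\rho_{a,c},1\rangle=\Gamma(c)$ — exactly as~(\ref{mecme1}) is obtained in Lemma~\ref{lme1x}, but now carrying the shift by $k+1$ as in Lemma~\ref{chxx} — I expect a relation of the form
$$
a_n\xi_{n+1}+(b_n+k+1)\xi_n+c_n\xi_{n-1}=0,\qquad n\ge 1,
$$
where $a_n,b_n,c_n$ are the Meixner recurrence coefficients; the modification $b_n\mapsto b_n+(k+1)$ is precisely the analogue of the Charlier shift $(n+a)\mapsto(n+a+k+1)$ in~(\ref{mecc1}). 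The boundary identity at $n=0$ would acquire an inhomogeneous constant coming from the point mass at $-k-1$ (equivalently from the normalization), so that a single initial value will then suffice.

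Next I would check that $(\zeta_n)_n$ obeys the same recurrence. Writing $\zeta_n=Ca^{-n}m_k^{a,-c+2}(-n-1)$ with $C=(-1)^kk!\Gamma(c-k-1)a^k/(1-a)^k$, I substitute into the displayed recurrence and divide by $Ca^{-n}$; the three terms become $a^{-1}m_k^{a,-c+2}(-n-2)$, $m_k^{a,-c+2}(-n-1)$ and $a\,m_k^{a,-c+2}(-n)$, and the resulting identity is exactly the second-order difference equation~(\ref{sodeme}) for $m_k^{a,-c+2}$ (eigenvalue $k(a-1)$) evaluated at $x=-n-1$. Here the factor $a^{-n}$ is indispensable: it converts the three-term recurrence, built from multiplication by $x$, into the $\Sh_{\pm1}$ difference equation, and it is precisely this twist that replaces the parameter $1/a$ occurring in the Meixner~I answer of Lemma~\ref{lme1x} by $a$ in the present shifted case.

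Finally I would verify the initial value $\xi_0=\zeta_0$, after which the recurrence propagates the equality. As in Lemma~\ref{lme1x} I would set $\eta_{k,c}=\langle\rho_{a,c-k-1}(x+k+1),(x+1)\cdots(x+k)\rangle=\xi_0$, derive from the shift-plus-Christoffel structure a recursion in $k$ for $\eta_{k,c}$, and check that the closed form $\zeta_0$ satisfies the same recursion using the difference relation~(\ref{sdm}) and the explicit expansion~(\ref{defmep}). The main obstacle is the bookkeeping in the first two steps — confirming that the constant shift in the middle coefficient is indeed $k+1$ and carrying the $a^{-n}$ factor consistently between the $\xi$- and $\zeta$-recurrences — after which both the recurrence match and the base case are routine, entirely parallel to the proofs already given.
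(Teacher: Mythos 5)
Your proposal is correct and follows exactly the template the paper indicates for this omitted proof (the paper states only that it is ``similar to the ones in the previous Sections''), namely the two-sequence/same-recurrence argument of Lemmas \ref{chxx} and \ref{lme1x}; your identification of the middle-coefficient shift $b_n\mapsto b_n+k+1$, of the inhomogeneous $n=0$ term coming from the mass at $-k-1$, and of the role of the $a^{-n}$ twist in reducing the $\zeta$-recurrence to the difference equation (\ref{sodeme}) for $m_k^{a,-c+2}$ at $x=-n-1$ all check out.
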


\begin{theorem} \label{lm61}
For $k\ge 1$, let $a$ and $c$ be real numbers satisfying that $a\not =0,1$, $c\not =k+1, k, ,k-1, \cdots $
and $m_{k}^{a,-c+2}(-n)\not =0$, $n\ge 1$, where $m_{k}^{a,-c+2}$, $k\ge 1$, are  Meixner polynomials (see (\ref{defmep})).
We define the sequences of numbers $(\lambda _n)_n$, $(\gamma _n)_{n\ge 1}$ and $(\beta _n)_{n\ge 1}$ by
\begin{align}\label{eigmek}
\lambda _n&=-\frac{a}{a-1}m_{k+1}^{a,-c+1}(-n),\\\label{defdelme}
\gamma_n&=m_{k}^{a,-c+2}(-n),\\
\label{defbetme}
\beta_n&=-\frac{1}{a}\frac{\gamma_{n+1}}{\gamma_n},
\end{align}
and the sequence of polynomials $(q_n)_n$ by $q_0=1$, and
\begin{equation}\label{defqme}
q_n(x)=m_{n}^{a,c}(x)+\beta _nm_{n-1}^{a,c}(x), \quad n\ge 1.
\end{equation}
Then the polynomials $(q_n)_n$ are orthogonal with respect
to the moment functional $\tilde \rho $ (\ref{tmew}). Moreover,
consider the difference operator of order $2k+2$ and genre $(-k-1,k+1)$ defined by
$$
D=P_1(D_{a,c})+\frac{1}{1-a}\nabla P_2(D_{a,c}),
$$
where $D_{a,c}$ is the second order difference operator for the Meixner polynomials (\ref{sodeme})
and  $P_1$ and $P_2$ are the polynomials of degrees $k+1$ and $k$, respectively, defined by
\begin{align}\label{defp1me}
P_1(x)&=-\frac{a}{a-1}m_{k+1}^{a,-c+1}\left( -\frac{x}{a-1}\right) ,\\\label{defp2me}
P_2(x)&=m_{k}^{a,-c+2}\left( -\frac{x}{a-1}-1\right) .
\end{align}
Then $D(q_n)=\lambda_nq_n$.
\end{theorem}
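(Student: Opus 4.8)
The plan is to mirror the proof of Theorem~\ref{lm51} (Charlier) and its already-stated Meixner~I analogue Theorem~\ref{lm62}, splitting the statement into the orthogonality assertion, handled by Lemma~\ref{addel}, and the difference-equation assertion, handled by Theorem~\ref{th5.1me1} in its case $i=2$. Since the target moment functional $\tilde\rho_{k,a,c}$ (\ref{tmew}) carries a Dirac delta at $-k-1$ exactly as in the Charlier case, I would invoke Lemma~\ref{addel} with $\lambda=-k-1$, $\nu=\tilde\rho_{k,a,c}$, $M=0$ and $p_n=m_n^{a,c}$. Note that the running hypothesis $m_k^{a,-c+2}(-n)\neq0$, $n\ge1$, guarantees that the denominators below never vanish.

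For the orthogonality part, the first step is to identify the auxiliary measure $\mu=(x+k+1)\tilde\rho_{k,a,c}$ of Lemma~\ref{addel}. Multiplying the representation (\ref{tmewb}) by $x+k+1$ annihilates the delta at $-k-1$ and rescales the remaining masses, giving $\mu=\frac{a^{k+1}}{(1-a)^{k+1}}\rho_{a,c}$; this is a direct check on the masses, or it follows from the shift relation $(x+c)\rho_{a,c}=\frac1{1-a}\rho_{a,c+1}$ used in the moment Lemma preceding the Theorem. As $\mu$ is a nonzero scalar multiple of $\rho_{a,c}$, the Meixner polynomials $m_n^{a,c}$ are orthogonal with respect to $\mu$, so the hypotheses of Lemma~\ref{addel} hold. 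The second step is to read off $\alpha_n=\langle\tilde\rho_{k,a,c},m_n^{a,c}\rangle$ from that Lemma and verify that the numbers $\beta_n=-\alpha_n/\alpha_{n-1}$ produced by (\ref{hk2}) (with $M=0$) collapse, the ratio of the $a$-powers contributing the factor $-1/a$, to $\beta_n=-\frac1a\,m_k^{a,-c+2}(-n-1)/m_k^{a,-c+2}(-n)=-\frac1a\,\gamma_{n+1}/\gamma_n$, matching (\ref{defbetme}). Lemma~\ref{addel} then yields orthogonality of $(q_n)_n$ with respect to $\tilde\rho_{k,a,c}$.

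For the difference-equation part I would appeal to Theorem~\ref{th5.1me1} with $i=2$, whose operator $D_2=P_1(D_{a,c})+\frac1{1-a}\nabla P_2(D_{a,c})$ is precisely the operator $D$ of the present statement. It remains to check that the data $P_1,P_2$ of (\ref{defp1me})--(\ref{defp2me}) fit that Theorem: the relations $\lambda_n=P_1(n(a-1))$ and $\gamma_{n+1}=P_2(n(a-1))$ are immediate, since the argument $-x/(a-1)$ sends $n(a-1)$ to $-n$, and the vanishing hypothesis $P_2(n(a-1))\neq0$ is exactly $m_k^{a,-c+2}(-n-1)\neq0$. The one structural identity is $P_2(x)=P_1(x+a-1)-P_1(x)$. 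Writing $t=-x/(a-1)$, this reads $m_k^{a,-c+2}(t-1)=\frac{a}{a-1}\bigl(m_{k+1}^{a,-c+1}(t)-m_{k+1}^{a,-c+1}(t-1)\bigr)$, and since $m_{k+1}^{a,-c+1}(t)-m_{k+1}^{a,-c+1}(t-1)=\Delta(m_{k+1}^{a,-c+1})(t-1)$, it follows at once from the Meixner difference relation (\ref{sdm}), $\Delta(m_{k+1}^{a,-c+1})=\frac{a-1}{a}m_k^{a,-c+2}$, evaluated at $t-1$.

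The routine bookkeeping is the cancellation of the $a$-powers and Gamma factors in $\alpha_n$; the only genuinely structural point is the identity $P_2(x)=P_1(x+a-1)-P_1(x)$, where the shift $a-1$ in the argument must be matched against the parameter increment $c\mapsto c+1$ in (\ref{sdm}). Once that is verified, both conclusions of the Theorem follow from the cited results with no further computation, exactly as in Theorems~\ref{lm51} and~\ref{lm62}.
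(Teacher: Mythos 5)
Your proof is correct and follows exactly the route the paper intends: the paper omits the proof of Theorem \ref{lm61} with the remark that it is analogous to Theorems \ref{lm51} and \ref{lm62}, i.e.\ orthogonality via Lemma \ref{addel} (with $\lambda=-k-1$, $M=0$, $\mu=\tfrac{a^{k+1}}{(1-a)^{k+1}}\rho_{a,c}$ and $\alpha_n$ read off from the preceding moment lemma) and the difference equation via Theorem \ref{th5.1me1} with $i=2$ after checking $P_2(x)=P_1(x+a-1)-P_1(x)$ from the relation (\ref{sdm}). All the identifications you make (the $-1/a$ factor from the $a$-powers, the matching of the nonvanishing hypotheses, and the shift computation for $P_1$, $P_2$) check out.
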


Conjecture 2 in the Introduction for the Meixner weight $\rho _{a,c}$
and the finite set $F=\{ 1, 2, \cdots, k\}$ can be deduced as a Corollary of the previous Theorem:

\begin{corollary}\label{lm613} For a positive integer $k$, write $F=\{ 1, 2, \cdots , k\}$ and $\pp(x)=\prod _{j\in F}(x-j)$. Assume
that $a\not =0,1$, $c\not =0, -1, -2, \cdots$ and that
$$
m_{k}^{a,-c-k+1}(-n)\not =0, \quad n\ge 1.
$$
Then, the orthogonal polynomials with respect to the measure $\pp\rho _{a,c}$
are common eigenfunctions of a $(2k+2)$-order difference operator of genre $(-k-1,k+1)$,
where $\rho_{a,c}$ is the moment functional for the Meixner polynomials $(m_n^{a,c})_n$.
\end{corollary}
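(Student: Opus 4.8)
The plan is to reduce the Corollary to Theorem \ref{lm61} by a translation of the variable, exactly as Corollary \ref{jodch} was deduced from Theorem \ref{lm51} in the Charlier case. The decisive observation is that the Christoffel transform $\pp\rho_{a,c}$ is just a shifted copy of the moment functional $\tilde\rho_{k,a,c+k+1}$ appearing in Theorem \ref{lm61}. Replacing $c$ by $c+k+1$ in the definition (\ref{tmew}) gives
$$
\tilde\rho_{k,a,c+k+1}=(x+1)\cdots (x+k)\,\rho_{a,c}(x+k+1),
$$
and translating by $x\mapsto x-k-1$ turns the factor $(x+1)\cdots(x+k)$ into $(x-k)\cdots(x-1)=\prod_{j\in F}(x-j)=\pp(x)$ while undoing the shift of $\rho_{a,c}$. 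Using that multiplication by a polynomial commutes with the translation, $(r\mu)(x+\lambda)=r(x+\lambda)\mu(x+\lambda)$, and that shifts compose additively, I would conclude
$$
\pp\rho_{a,c}=\tilde\rho_{k,a,c+k+1}(x-k-1).
$$

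Next I would check that the hypotheses transport correctly under $c\mapsto c+k+1$. Theorem \ref{lm61} requires $a\not=0,1$, $c\not=k+1,k,k-1,\dots$ and $m_{k}^{a,-c+2}(-n)\not=0$ for $n\ge 1$; after the substitution these become precisely $a\not=0,1$, $c\not=0,-1,-2,\dots$ and $m_{k}^{a,-c-k+1}(-n)\not=0$ for $n\ge 1$, which are exactly the assumptions of the Corollary. Hence Theorem \ref{lm61} applies with $c$ replaced by $c+k+1$ and yields a sequence $(q_n)_n$ of orthogonal polynomials with respect to $\tilde\rho_{k,a,c+k+1}$ that are common eigenfunctions of a difference operator $D=\sum_{l=-k-1}^{k+1}f_l\Sh_l$ of order $2k+2$ and genre $(-k-1,k+1)$, say $D(q_n)=\lambda_n q_n$.

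It then remains to carry orthogonality and the eigenvalue equation through the translation. By the shift property recorded in the Preliminaries, since $\pp\rho_{a,c}=\tilde\rho_{k,a,c+k+1}(x-k-1)$, the polynomials $\tilde q_n(x)=q_n(x-k-1)$ are orthogonal with respect to $\pp\rho_{a,c}$. For the operator, I would write the eigenvalue equation $\sum_l f_l(x)q_n(x+l)=\lambda_n q_n(x)$ and replace $x$ by $x-k-1$, obtaining
$$
\sum_{l=-k-1}^{k+1} f_l(x-k-1)\,\tilde q_n(x+l)=\lambda_n\tilde q_n(x);
$$
that is, the $\tilde q_n$ are common eigenfunctions of the conjugated operator $\tilde D=\sum_{l=-k-1}^{k+1}f_l(x-k-1)\Sh_l$. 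Since translating a nonzero polynomial keeps it nonzero, the coefficients of $\Sh_{k+1}$ and $\Sh_{-k-1}$ in $\tilde D$ remain nonzero, so $\tilde D$ again has order $2k+2$ and genre $(-k-1,k+1)$, which is the desired conclusion.

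There is no substantial obstacle here: the whole content is the identity $\pp\rho_{a,c}=\tilde\rho_{k,a,c+k+1}(x-k-1)$ together with the two elementary transport facts. The only points that deserve a moment's care are the exact translation of the parameter constraints under $c\mapsto c+k+1$ and the verification that conjugation by the shift preserves the genre and order rather than collapsing the extreme coefficients.
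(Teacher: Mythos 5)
Your proof is correct and follows exactly the paper's (omitted, but clearly intended) argument: the analogous Corollaries \ref{jodch} and \ref{lm623} are proved precisely by identifying the Christoffel transform as a shifted copy of the $\tilde\rho$ measure and invoking the corresponding theorem, which is what you do via the identity $\pp\rho_{a,c}=\tilde\rho_{k,a,c+k+1}(x-k-1)$. Your additional checks on the transported hypotheses and on the conjugated operator's order and genre are accurate and merely make explicit what the paper leaves to the reader.
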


\begin{remark}
Let us note that in some cases the measure $\pp \rho_{a,c}$ can be positive even though the measure $\rho _{a,c}$ is not.
Indeed, that it the case, for instance, when $k$ is odd, $0<a<1$ and $-2j-1<c<-2j$, $j=0,\cdots , (k-1)/2$, or
when $k$ is even, $0<a<1$ and $-2j<c<-2j-1$, $j=1,\cdots , k/2$.

\end{remark}

\section{Krawtchouk case}
For $a\not=0,-1$, we write $(k_{n}^{a,N})_n$ for the sequence of  Krawtchouk polynomials defined by
\begin{equation}\label{defkrp}
k_{n}^{a,N}(x)=\frac{1}{n!}\sum_{j=0}^n(-1)^{n+j}\frac{(1+a)^{j-n}}{a^{j-n}}\frac{(-n)_j(-x)_j(N-n)_{n-j}}{j!}.
\end{equation}
Krawtchouk polynomials are eigenfunctions of the following second order difference operator ($n\ge 0$)
\begin{equation}\label{sodekr}
D_{a,N} =x\Sh_{-1}-(x-a(x-N+1))\Sh_0-a(x-N+1)\Sh_1,\qquad D_{a,N} (k_{n}^{a,N})=-n(1+a)k_{n}^{a,N}.
\end{equation}
For $a\not=0,-1$ and $N\not=1, 2, \cdots$, they are always orthogonal with respect to a moment functional $\rho _{a,N}$,
which we normalize by taking
$\langle \rho_{a,N},1\rangle=1$.
When $N$ is a positive integer and $a>0$, the first $N$ polynomials are orthogonal with respect to the positive Krawtchouk measure
\begin{equation}\label{krw}
\rho_{a,N}=\frac{\Gamma(N)}{(1+a)^{N-1}}\sum _{x=0}^{N-1} \frac{a^{x}}{\Gamma(N-x)x!}\delta _x.
\end{equation}
The structural formulas for $(k_{n}^{a,N})_n$ can be found in \cite{NSU}, pp. 30-53 (see
also \cite{KLS}, pp, 204-211).

Proceeding as in the previous Sections, we have identified two $\D$-operators for Krawtchouk polynomials (the proof is omitted).

\begin{lemma}\label{lTkr}
For $a\not =0,-1$ and $N\in \RR$, consider the Krawtchouk polynomials $(k_{n}^{a,N})_n$ (\ref{defkrp}). Then
the operators $\D _i$, $i=1,2$, defined by (\ref{defTo}) from the sequences ($n\ge 0$)
\begin{align*}
\varepsilon _{n,1}&=1/(1+a),\\
\varepsilon _{n,2}&=-a/(1+a),
\end{align*}
are $\D$-operators for $(k_{n}^{a,N})_n$ and the algebra $\A$  (\ref{algdiff}) of difference
operators  with polynomial coefficients.
More precisely
\begin{align*}
\displaystyle \D_1&=\frac{1}{1+a}\nabla ,\\
\displaystyle \D_2&=-\frac{a}{1+a}\Delta .
\end{align*}
\end{lemma}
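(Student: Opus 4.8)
The plan is to follow the same strategy that proved Lemma \ref{lTch} for the Charlier case, namely to compute the action of the operator $\D_i$ (defined by (\ref{defTo}) from the constant sequence $\varepsilon_{n,i}$) on a convenient basis of polynomials and recognize the result as a first-order difference operator. Since $\varepsilon_{n,i}$ is constant in $n$ for each $i$, the auxiliary operator $\zeta$ acts as $\zeta(k_n^{a,N})=\varepsilon_{n,i}k_{n-1}^{a,N}$, so $\zeta$ is just a scalar multiple of the lowering operator on the Krawtchouk family. The first task is therefore to identify that lowering operator explicitly: I expect Krawtchouk polynomials to satisfy a forward-difference relation of the shape $\Delta(k_n^{a,N})=\mu\, k_{n-1}^{a,N'}$ for some constant $\mu$ and possibly a shifted parameter, entirely analogous to (\ref{sdch}) and (\ref{sdm}). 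Establishing this structural identity from the hypergeometric expression (\ref{defkrp}) is the natural starting point.

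With the lowering relation in hand I would proceed as in Lemma \ref{lTme}. For $\D_1$, using $\varepsilon_{n,1}=1/(1+a)$, I write
$$
\D_1(k_n^{a,N})=\sum_{j=1}^n(-1)^{j+1}\zeta^j(k_n^{a,N})=\sum_{j=1}^n(-1)^{j+1}(1+a)^{-j}k_{n-j}^{a,N},
$$
and similarly for $\D_2$ with the factor $(-a/(1+a))^j$. The goal is to resum these alternating sums into a closed form. The key will be a set of connection/summation identities for Krawtchouk polynomials paralleling (\ref{sdm2b}) and (\ref{sdm3}): I would look for relations expressing $\sum_j (1+a)^{-j}k_{n-j}^{a,N}$ as a single Krawtchouk polynomial with shifted parameters, and an expression for $k_{n-1}^{a,N}$ in terms of $k_n$ evaluated at shifted arguments. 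Combining these should collapse the telescoping sum, and after using the lowering relation once more, yield $\D_1=\frac{1}{1+a}\nabla$ and $\D_2=-\frac{a}{1+a}\Delta$. Finally one checks $\nabla,\Delta\in\A$, which is immediate since both are finite difference operators with polynomial (indeed constant) coefficients.

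The main obstacle I anticipate is locating the precise structural identities for Krawtchouk polynomials that play the role of (\ref{sdm2b})–(\ref{sdm3}) in the Meixner proof. Krawtchouk polynomials are a terminating/limiting case of Meixner polynomials, so these identities should be derivable either by a direct (if tedious) manipulation of the hypergeometric sum (\ref{defkrp}) or by specializing the known Meixner relations; getting the parameter shifts and the constants $1/(1+a)$ and $-a/(1+a)$ exactly right is where the real bookkeeping lies. Given the parallel structure, I would expect the two cases $\D_1$ and $\D_2$ to be mirror images of each other under the symmetry that interchanges $\Delta$ and $\nabla$ (as already visible in the answer), so proving one carefully and indicating that the other follows ``in a similar way'' — exactly as the paper does for $\D_2$ in Lemma \ref{lTme} — is the economical route. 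Since the paper explicitly states that the proof of Lemma \ref{lTkr} is omitted, my proposal is essentially to reconstruct this omitted computation along the Charlier/Meixner template.
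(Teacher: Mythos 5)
Your proposal is correct and follows exactly the route the paper intends: the paper omits this proof, saying only that one proceeds ``as in the previous Sections,'' and your plan is precisely the Charlier/Meixner template applied to Krawtchouk (the anticipated lowering relation does hold in the form $\Delta (k_{n}^{a,N})=k_{n-1}^{a,N-1}$, with the analogues of (\ref{sdm2b})--(\ref{sdm3}) obtainable from (\ref{defkrp}) or by specializing the Meixner identities, and the constants $1/(1+a)$ and $-a/(1+a)$ check out). So there is no divergence from the paper's (omitted) argument; the only caveat is that your write-up is a plan rather than the carried-out computation, and the remaining work is exactly the bookkeeping you identify.
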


Each one of the $\D$-operators displayed in the previous Lemma together with Lemma \ref{fl1v} gives rise to the corresponding
class of polynomials satisfying higher order difference equations.
However, we only proceed with the first  $\D$-operator because due to the symmetries of Krawtchouk polynomials,
the examples generated from the second $\D$-operator are essentially the same: if we write $q^{(1)}_{n,a,N}$, $q^{(2)}_{n,a,N}$,
respectively, for
the polynomials defined by (\ref{qnkr}) below from the first and the second $\D$-operators above, it is not difficult to see (using
that $k_{n}^{a,N}(x)=(-1)^nk_{n}^{1/a,N}(N-1-x)$)
that
$$
q^{(1)}_{n,a,N}(x)=(-1)^nq^{(2)}_{n,1/a,N}(-x+N-1)
$$
(using the first $\D$-operator we will prove Conjecture 2 in the Introduction for the Krawtchouk polynomials, while using the second $\D$-operator one can
similarly prove the equivalent Conjecture 2' in \cite{du1} for the Krawtchouk polynomials).

\begin{theorem}\label{th5.1kr} Let $P_1$ be a polynomial of degree $k+1$, $k\ge 1$,
and write $P_2(x)=P_1(x-1-a)-P_1(x)$ (so that $P_2$ is a polynomial of degree $k$). We assume that $P_2(-n(1+a))\not =0$, $n\ge 0$, and
define the sequences of numbers
\begin{align}\nonumber
\gamma_{n+1}&=P_2(-n(1+a)),\quad n\ge 0,\\\nonumber
\lambda_n&=P_1(-n(1+a)),\quad n\ge 0,\\\label{bekr}
\beta_n&=\frac{1}{1+a}\frac{\gamma_{n+1}}{\gamma_n}\quad n\ge 1,
\end{align}
and the sequence of polynomials $q_0=1$, and for $n\ge 1$
\begin{equation}\label{qnkr}
q_n(x)=k_{n}^{a,N}(x)+\beta_nk_{n-1}^{a,N}(x),
\end{equation}
where $k_{n}^{a,N}$ is the Krawtchouk polynomial (\ref{defkrp}) ($a\not =0,-1$).
Consider the second order difference operator $D_{a,N}$ (\ref{sodekr}) with respect to which
the Krawtchouk polynomials are eigenfunctions. Write finally $D$ for the difference operator of order $2k+2$ and genre $(-k-1,k+1)$
$$
D=P_1(D_{a,N})+\frac{1}{1+a}\nabla P_2(D_{a,N}).
$$
Then $D(q_n)=\lambda_nq_n$, $n\ge 0$.
\end{theorem}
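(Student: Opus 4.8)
The plan is to read off this statement as a direct instance of Lemma \ref{fl1v}, precisely as Theorem \ref{th5.1ch} was for the Charlier family, and then to identify the extreme shift coefficients to confirm the order and genre. First I would fix the main characters: take $D_p=D_{a,N}$, so that by (\ref{sodekr}) the eigenvalues are $\theta_n=-n(1+a)$, and take the $\D$-operator furnished by Lemma \ref{lTkr}, namely $\D_1=\frac{1}{1+a}\nabla$, which is generated by the constant sequence $\varepsilon_n=1/(1+a)$. With these choices the operator $D=P_1(D_{a,N})+\frac{1}{1+a}\nabla P_2(D_{a,N})$ is exactly the operator $D_q=P_1(D_p)+\D_1 P_2(D_p)$ of (\ref{defD}), and the numbers $\beta_n$ of (\ref{bekr}) are precisely $\beta_n=\varepsilon_n\gamma_{n+1}/\gamma_n$ as in (\ref{defbetng}).

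The one hypothesis of Lemma \ref{fl1v} that is not automatic is the antidifference condition $\lambda_n-\lambda_{n-1}=\gamma_n$, which certifies that $\lambda_n=P_1(\theta_n)$ is the required polynomial. I would verify it in one line from $P_2(x)=P_1(x-1-a)-P_1(x)$. Since $\gamma_n=P_2(\theta_{n-1})$ and $\theta_{n-1}-1-a=-(n-1)(1+a)-(1+a)=-n(1+a)=\theta_n$, we get $\gamma_n=P_1(\theta_n)-P_1(\theta_{n-1})=\lambda_n-\lambda_{n-1}$. All the hypotheses of Lemma \ref{fl1v} being met, the Lemma yields at once both $D(q_n)=\lambda_n q_n$ and $D\in\A$, so the eigenfunction part of the statement requires no computation beyond this bookkeeping.

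It then remains to establish that $D$ has order $2k+2$ and genre $(-k-1,k+1)$. Since $P_1,P_2$ have degrees $k+1,k$ and $D_{a,N}$ has genre $(-1,1)$ with $\Sh_{-1}$-coefficient $f_{-1}=x$ and $\Sh_{1}$-coefficient $f_{1}=-a(x-N+1)$, the term $P_1(D_{a,N})$ has genre $(-k-1,k+1)$ while $\frac{1}{1+a}\nabla P_2(D_{a,N})$ has genre $(-k-1,k)$. Hence the coefficient of $\Sh_{k+1}$ comes only from $P_1(D_{a,N})$ and equals $u_1 f_1(x)f_1(x+1)\cdots f_1(x+k)$, where $u_1$ is the leading coefficient of $P_1$; because $a\neq 0$ this is a genuine degree-$(k+1)$ polynomial and is nonzero. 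The coefficient of $\Sh_{-k-1}$ receives contributions from both terms and works out to $f_{-1}(x-1)\cdots f_{-1}(x-k)\bigl(u_1 f_{-1}(x)-\frac{u_2}{1+a}\bigr)$, with $u_2$ the leading coefficient of $P_2$; this is again a nonzero polynomial. The only place where any care is needed is precisely this verification that the two extreme coefficients do not vanish identically, but it is a routine computation with no genuine obstacle, running exactly parallel to the Charlier case, so the argument closes.
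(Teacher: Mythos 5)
Your proposal is correct and follows exactly the route the paper intends: the paper omits the proof of this theorem, indicating only that it follows from Lemma \ref{fl1v} and Lemma \ref{lTkr} in the same way as Theorem \ref{th5.1ch}, and your verification of the antidifference identity $\gamma_n=\lambda_n-\lambda_{n-1}$ via $\theta_{n-1}-(1+a)=\theta_n$ together with the computation of the extreme coefficients of $\Sh_{\pm(k+1)}$ (correctly carrying the $1/(1+a)$ factor from $\D_1$) is precisely the Charlier argument transposed to the Krawtchouk data.
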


For a convenient choice of the polynomial $P_1$, the polynomials $(q_n)_n$ in the previous Lemma turn out to be orthogonal with respect
to a moment functional. Indeed, for $k$ a positive integer, $a\not =0,-1$ and $N\not =0, -1 , \cdots , -k$ let $\tilde \rho_{k,a,N}$
be the measure defined by
\begin{equation}\label{tkrw}
\tilde \rho_{k,a,N}=(x+1)\cdots (x+k)\rho_{a,N+k+1}(x+k+1)
\end{equation}
where $\rho _{a,N}$ is the orthogonalizing moment functional  for the Krawtchouk polynomials $(k_{n}^{a,N})_n$.
For $N$ a positive integer, we have that
\begin{equation}\label{tkr2}
\tilde \rho_{k,a,N}=\frac{1}{(1+a)^{N+k}}
\left( (-1)^kk!\delta_{-k-1}+\sum _{x=0}^{N-1}\frac{\Gamma(N+k+1)a^{x+k+1}}{(x+k+1)\Gamma(N-x)x!}\delta _x \right) .
\end{equation}
To simplify the notation we remove the dependence of $a,N$ and $k$ and write $\tilde \rho =\tilde \rho_{k,a,N}$.

The proofs of the following theorems are similar to the ones in the previous Sections and are omitted.

\begin{lemma} For $k\ge 1$, $a\not =0,-1$, $N\in \RR$ and $n\ge 0$, we have
$$
\langle \tilde \rho_{k,a,N},k_{n}^{a,N}\rangle =\frac{(-1)^nk!k_{k}^{a,-N}(-n-1)}{(1+a)^{n}}.
$$
\end{lemma}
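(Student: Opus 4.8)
The plan is to reproduce the argument used for Lemma~\ref{chxx} (Charlier), which is the closest structural analogue, since the measure $\tilde\rho_{k,a,N}$ in (\ref{tkrw}) is, like the Charlier $\tilde\rho_{k,a}$, a Christoffel transform of a shifted weight. Write $\xi_n=\langle\tilde\rho_{k,a,N},k_n^{a,N}\rangle$ and set $\zeta_n=\frac{(-1)^nk!k_k^{a,-N}(-n-1)}{(1+a)^n}$. The goal is to show that $(\xi_n)_n$ and $(\zeta_n)_n$ obey one and the same three-term recurrence and coincide at two consecutive indices, whence $\xi_n=\zeta_n$ for all $n\ge0$.

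First I would produce a three-term recurrence for $(\xi_n)_n$. Exactly as the Charlier recurrence (\ref{repnch}) gives (\ref{mecc1}), the three-term recurrence $xk_n^{a,N}=a_nk_{n+1}^{a,N}+b_nk_n^{a,N}+c_nk_{n-1}^{a,N}$ (which exists by Favard's theorem, since the Krawtchouk polynomials are orthogonal with respect to $\rho_{a,N}$), combined with the structure of $\tilde\rho_{k,a,N}$ in (\ref{tkrw}), yields a recurrence of the form $a_n\xi_{n+1}+\hat b_n\xi_n+c_n\xi_{n-1}=0$. The only change from the bare recurrence is a correction to the middle coefficient, produced by the shift $x\mapsto x+k+1$ together with the Krawtchouk weight lowering identity $(N-1-x)\rho_{a,N}=\frac{N-1}{1+a}\rho_{a,N-1}$ (the analogue of $x\rho_a(x)=a\rho_a(x-1)$ and $(x+c)\rho_{a,c}=\rho_{a,c+1}/(1-a)$ used in the earlier proofs).

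Next I would check that $(\zeta_n)_n$ satisfies exactly this recurrence. After dividing out the factor $(-1)^nk!/(1+a)^n$, the claim reduces to a single linear identity among $k_k^{a,-N}(-n-2)$, $k_k^{a,-N}(-n-1)$ and $k_k^{a,-N}(-n)$, the geometric factor $(1+a)^{-n}$ and the sign $(-1)^n$ converting the recurrence coefficients $a_n,\hat b_n,c_n$ into the coefficients of the difference operator. This identity is precisely what one obtains by substituting $x=-n-1$ into the second order difference equation (\ref{sodekr}) for the polynomials $(k_k^{a,-N})_k$, whose eigenvalue is $-k(1+a)$: the terms $\Sh_{-1}$, $\Sh_0$, $\Sh_1$ evaluated at $x=-n-1$ produce exactly the three values $k_k^{a,-N}(-n-2)$, $k_k^{a,-N}(-n-1)$, $k_k^{a,-N}(-n)$.

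It then remains to match initial conditions, and this is where the real work lies. Agreement at one convenient index (e.g.\ a normalized $\xi_{-1}$, as in Lemma~\ref{lme1x}) is an immediate computation; the substantive identity is $\xi_0=\langle\tilde\rho_{k,a,N},1\rangle=\zeta_0=k!\,k_k^{a,-N}(-1)$. Following the Charlier and Meixner templates, I would set $\eta_{k,N}=\langle\rho_{a,N+k+1}(x+k+1),(x+1)\cdots(x+k)\rangle$ (which equals $\xi_0$), use the weight lowering identity to derive a recurrence for $\eta_{k,N}$ in the index $k$, and then verify that $k!\,k_k^{a,-N}(-1)$ satisfies the same $k$-recurrence (using the Krawtchouk difference relation and the value $k_k^{a,-N}(0)$), concluding by induction on $k$. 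The main obstacle is exactly this auxiliary induction: forcing the $\eta_{k,N}$ recurrence and the Krawtchouk-value recurrence to coincide requires careful handling of the parameter changes $N\mapsto N+k+1$ and $N\mapsto-N$ and of the accompanying normalizing constants, just as the $\eta_k$ and $\tau_{k,c}$ bookkeeping was the delicate point in Lemmas~\ref{chxx} and \ref{lme1x}.
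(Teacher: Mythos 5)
Your proposal follows exactly the template the paper has in mind: the paper omits this proof, stating it is similar to those of Lemmas \ref{chxx} and \ref{lme1x}, and your plan (same three-term recurrence for $\xi_n$ and $\zeta_n$ via the Krawtchouk recurrence with the middle coefficient shifted by $k+1$, the substitution $x=-n-1$ in the second order difference equation for $(k_k^{a,-N})_k$, and the auxiliary induction on $k$ for the initial condition) is precisely that argument transposed to the Krawtchouk parameters. The identities you invoke, e.g.\ $(N-1-x)\rho_{a,N}=\tfrac{N-1}{1+a}\rho_{a,N-1}$ and the fact that $(x+k+1)\tilde\rho_{k,a,N}$ is a constant multiple of $\rho_{a,N}$, are correct, so this is the paper's approach and no gap is present.
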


\begin{theorem}\label{lmkr}
For $k\ge 1$, let $a$ and $N$ be real numbers  satisfying $a\not =0,-1$
and $k_{k}^{a,-N}(-n)\not =0$, $n\ge 1$, where $k_{k}^{a,-N}$, $k\ge 1$, are Krawtchouk polynomials (see (\ref{defkrp})).
We define the sequences of numbers $(\lambda _n)_n$, $(\gamma _n)_{n\ge 1}$ and $(\beta _n)_{n\ge 1}$  by
\begin{align}\label{eigkrk}
\lambda _n&=-k_{k+1}^{a,-N+1}(-n),\\\label{defdelkr}
\gamma_n&=k_{k}^{a,-N}(-n),\\
\label{defbetkr}
\beta_n&=\frac{n}{1+a}\frac{\gamma_{n+1}}{\gamma_n},
\end{align}
and the sequence of polynomials $(q_n)_n$ by $q_0=1$, and
\begin{equation}\label{defqkr}
q_n(x)=k_{n}^{a,N}(x)+\beta _nk_{n-1}^{a,N}(x), \quad n\ge 1.
\end{equation}
If $N$ is not a positive integer, then the polynomials $(q_n)_n$ are orthogonal with respect
to the moment functional $\tilde \rho $ (\ref{tkrw}). If $N$ is a positive integer, then the polynomials $(q_n)_{0\le n\le N-1}$ are orthogonal
with respect
to the measure $\tilde \rho $ (\ref{tkr2}). Moreover, consider the difference operator of order $2k+2$ and genre $(-k-1,k+1)$ defined by
$$
D=P_1(D_{a,N})+\frac{1}{1+a}\nabla P_2(D_{a,N}),
$$
where $D_{a,N}$ is the second order difference operator for the Krawtchouk polynomials $(k_{n}^{a,N})_n$ (see \ref{sodekr}),
and  $P_1$ and $P_2$ are the polynomials of degrees $k+1$ and $k$, respectively, defined by
\begin{align}\label{defp1kr}
P_1(x)&=-k_{k+1}^{a,-N+1}\left( \frac{x}{1+a}\right) ,\\\label{defp2kr}
P_2(x)&=k_{k}^{a,-N}\left( \frac{x}{1+a}-1\right) .
\end{align}
Then $D(q_n)=\lambda_nq_n$.
\end{theorem}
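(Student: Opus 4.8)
The plan is to follow the template already established for the Charlier case (Theorem \ref{lm51}) and the Meixner case (Theorem \ref{lm62}), splitting the statement into an orthogonality part, handled through Lemma \ref{addel}, and a difference-operator part, handled through Theorem \ref{th5.1kr}.

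For the orthogonality I would invoke Lemma \ref{addel} with $\nu=\tilde\rho_{k,a,N}$, $\lambda=-k-1$, $M=0$ and $p_n=k_n^{a,N}$. The hypothesis of that Lemma requires $(k_n^{a,N})_n$ to be orthogonal with respect to $\mu=(x-\lambda)\nu=(x+k+1)\tilde\rho_{k,a,N}$, so the first step is the structural identity
$$(x+1)(x+2)\cdots(x+k+1)\,\rho_{a,N+k+1}(x+k+1)=C\,\rho_{a,N}$$
for a suitable nonzero constant $C$. This is checked by a direct computation: the polynomial factor $(x+1)\cdots(x+k+1)$ annihilates exactly the $k+1$ extra mass points $x=-k-1,\dots,-1$ carried by the shifted weight $\rho_{a,N+k+1}(x+k+1)$, and on the remaining support $\{0,\dots,N-1\}$ it rescales the masses so as to reproduce $\rho_{a,N}$ (\ref{krw}) up to the constant $C$. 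Since $\mu$ is then a nonzero multiple of the Krawtchouk functional, the $k_n^{a,N}$ are its orthogonal polynomials, and the numbers $\alpha_n=\langle\nu,k_n^{a,N}\rangle$ are supplied by the preceding Lemma as $\alpha_n=(-1)^nk!\,k_k^{a,-N}(-n-1)/(1+a)^n$. With $M=0$ the recipe (\ref{hk2}) gives
$$\beta_n=-\frac{\alpha_n}{\alpha_{n-1}}=\frac{1}{1+a}\,\frac{k_k^{a,-N}(-n-1)}{k_k^{a,-N}(-n)}=\frac{1}{1+a}\,\frac{\gamma_{n+1}}{\gamma_n},$$
which reproduces the value required in Theorem \ref{th5.1kr} (see (\ref{bekr})) with $\gamma_n=k_k^{a,-N}(-n)$ as in (\ref{defdelkr}); Lemma \ref{addel} then yields orthogonality with respect to $\tilde\rho$. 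When $N$ is a positive integer the support of $\rho_{a,N}$ (\ref{krw}) is the finite set $\{0,\dots,N-1\}$, so only the finite segment $(q_n)_{0\le n\le N-1}$ exists, which is why the statement is phrased separately in that case.

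For the difference-operator part I would appeal to Theorem \ref{th5.1kr} with $P_1$ and $P_2$ as in (\ref{defp1kr})--(\ref{defp2kr}). Writing $u=x/(1+a)$ one has $(x-1-a)/(1+a)=u-1$, so
$$P_1(x-1-a)-P_1(x)=k_{k+1}^{a,-N+1}(u)-k_{k+1}^{a,-N+1}(u-1),$$
and the matching $P_2(x)=P_1(x-1-a)-P_1(x)$ demanded by Theorem \ref{th5.1kr} reduces to the single contiguous relation $k_{k+1}^{a,-N+1}(u)-k_{k+1}^{a,-N+1}(u-1)=k_k^{a,-N}(u-1)$, i.e.\ a lowering relation in degree and in the second parameter, entirely analogous to the Charlier identity (\ref{sdch}). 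Granting this, the substitutions $\lambda_n=P_1(-n(1+a))=-k_{k+1}^{a,-N+1}(-n)$ and $\gamma_{n+1}=P_2(-n(1+a))=k_k^{a,-N}(-n-1)$ reproduce (\ref{eigkrk}) and (\ref{defdelkr}), so Theorem \ref{th5.1kr} immediately gives $D(q_n)=\lambda_nq_n$ together with the claimed order $2k+2$ and genre $(-k-1,k+1)$.

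The main obstacle is the pair of Krawtchouk structural facts: the Christoffel-type identity $(x+1)\cdots(x+k+1)\rho_{a,N+k+1}(x+k+1)=C\rho_{a,N}$, with the correct bookkeeping of the constant $C$ and of the parameter shifts, and the lowering relation $k_{k+1}^{a,-N+1}(u)-k_{k+1}^{a,-N+1}(u-1)=k_k^{a,-N}(u-1)$ in the normalization (\ref{defkrp}). Both are routine once the Krawtchouk structural formulas (and the second-order equation (\ref{sodekr}) underlying the preceding Lemma) are recorded in this normalization, but getting every parameter shift right is where the care is needed.
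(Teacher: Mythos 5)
Your proof is correct and follows exactly the template the paper intends here (the paper omits this proof, declaring it analogous to the Charlier case, Theorem \ref{lm51}): Lemma \ref{addel} with $M=0$, $\lambda=-k-1$, $\nu=\tilde\rho_{k,a,N}$ for the orthogonality, together with Theorem \ref{th5.1kr} after verifying $P_2(x)=P_1(x-1-a)-P_1(x)$ via the lowering relation $\Delta k_{n}^{a,M}=k_{n-1}^{a,M-1}$, which does hold in the normalization (\ref{defkrp}). Your computation moreover gives $\beta_n=\frac{1}{1+a}\frac{\gamma_{n+1}}{\gamma_n}$, consistent with (\ref{bekr}) and confirming that the extra factor $n$ in (\ref{defbetkr}) is a typo in the statement.
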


Conjecture 2 in the Introduction for the Krawtchouk weight $\rho _{a,N}$
and the finite set $F=\{ 1, 2, \cdots, k\}$ can be deduced as a Corollary of the previous Theorem:

\begin{corollary}\label{lmkr3} For a positive integer $k$, write $F=\{ 1, 2, \cdots , k\}$ and $\pp(x)=\prod _{j\in F}(x-j).$
Assume that $a\not =0,-1$
and $k_{k}^{a,-N-k-1}(-n)\not =0$, $n\ge 1$. Then,
the orthogonal polynomials with respect to the moment functional $\pp\rho _{a,N}$ (\ref{tkrw}) are common eigenfunctions of a $(2k+2)$-order
difference operator of genre $(-k-1,k+1)$,
where $\rho_{a,N}$ is the moment functional for the Krawtchouk polynomials.
\end{corollary}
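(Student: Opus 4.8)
The plan is to reduce the Corollary to Theorem \ref{lmkr} by exhibiting $\pp\rho_{a,N}$ as a translate of one of the moment functionals $\tilde\rho_{k,a,N'}$ treated there, exactly as was done for the Charlier and Meixner weights in Corollaries \ref{jodch} and \ref{lm623}. First I would pin down the correct value of $N'$. Starting from the definition \ref{tkrw}, $\tilde\rho_{k,a,N'}=(x+1)\cdots(x+k)\rho_{a,N'+k+1}(x+k+1)$, and applying the translation $x\mapsto x-k-1$, the factor $(x+1)\cdots(x+k)$ becomes $(x-k)(x-k+1)\cdots(x-1)=\pp(x)$ while $\rho_{a,N'+k+1}(x+k+1)$ becomes the unshifted $\rho_{a,N'+k+1}$. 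This yields
\[
\tilde\rho_{k,a,N'}(x-k-1)=\pp\,\rho_{a,N'+k+1},
\]
so the choice $N'=N-k-1$ gives the key identity $\pp\rho_{a,N}=\tilde\rho_{k,a,N-k-1}(x-k-1)$. For $N$ a positive integer one may instead verify this directly against the explicit formula \ref{tkr2}, matching the mass $(-1)^kk!$ that both functionals place at the origin and the masses they place at $x=k+1,\dots,N-1$.

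Second, I would transfer both conclusions of Theorem \ref{lmkr} across this translation. Since $\pp\rho_{a,N}$ is the translate by $k+1$ of $\tilde\rho_{k,a,N-k-1}$, its orthogonal polynomials are the translates $\tilde q_n(x)=q_n(x-k-1)$ of the polynomials $(q_n)_n$ produced by Theorem \ref{lmkr}; this is the elementary fact recorded in the Preliminaries that $(p_n(x+\lambda))_n$ is orthogonal with respect to $\mu(x+\lambda)$. For the difference equation, if $D=\sum_l f_l(x)\Sh_l$ is the operator of order $2k+2$ and genre $(-k-1,k+1)$ furnished by Theorem \ref{lmkr}, I would conjugate it by the translation and set $\tilde D=\sum_l f_l(x-k-1)\Sh_l$; the change of variable $y=x-k-1$ reduces $\tilde D(\tilde q_n)(x)$ to $D(q_n)(y)=\lambda_nq_n(y)$, so that $\tilde D(\tilde q_n)=\lambda_n\tilde q_n$. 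Because only the polynomial coefficients are translated while the shifts $\Sh_l$ are left untouched, $\tilde D$ keeps order $2k+2$ and genre $(-k-1,k+1)$, which is exactly the assertion of the Corollary.

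Finally I would account for the nonvanishing hypothesis. Applying Theorem \ref{lmkr} with parameter $N-k-1$ requires $\gamma_n=k_k^{a,-(N-k-1)}(-n)\neq0$ for $n\ge1$, which is what makes the numbers $\beta_n$ of \ref{defbetkr} well defined and, through the Christoffel-transform criterion \ref{mata0}, guarantees that $\pp\rho_{a,N}$ genuinely admits a sequence of orthogonal polynomials. I expect the only delicate point to be this bookkeeping: tracking how the Krawtchouk parameter and the evaluation point transform under the translation so that the nonvanishing condition recorded in the statement coincides with the one demanded by Theorem \ref{lmkr}. This is the Krawtchouk counterpart of the Casorati-determinant identity used for the Charlier case in Remark \ref{remch} (proved in \cite{duc}), which rewrites the determinantal existence condition $\det\Lambda_n\neq0$ as a single Krawtchouk evaluation; the self-duality of the Krawtchouk family is what lets one pass between the two forms. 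One should also keep the two regimes of Theorem \ref{lmkr} apart: when $N-k-1$ is not a positive integer the translated family is infinite, whereas when it is a positive integer one lands in the finite regime and obtains a finite orthogonal family.
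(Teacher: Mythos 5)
Your reduction is exactly the paper's (omitted) argument: as in Corollaries \ref{jodch} and \ref{lm613}, one writes $\pp\rho_{a,N}$ as the shift by $k+1$ of one of the functionals of Theorem \ref{lmkr} and conjugates the $(2k+2)$-order operator by the translation, which leaves order and genre untouched; your identification $\pp\rho_{a,N}=\tilde\rho_{k,a,N-k-1}(x-k-1)$ is the correct one, since the Krawtchouk parameter enters (\ref{tkrw}) as $N+k+1$.

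The one step you deferred --- checking that the nonvanishing condition demanded by Theorem \ref{lmkr} for the parameter $N-k-1$ agrees with the one printed in the Corollary --- does not close as stated. Theorem \ref{lmkr} applied with $N$ replaced by $N-k-1$ requires $k_{k}^{a,-(N-k-1)}(-n)=k_{k}^{a,-N+k+1}(-n)\neq 0$ for $n\ge 1$, whereas the Corollary asks for $k_{k}^{a,-N-k-1}(-n)\neq 0$; these are genuinely different conditions. A check at $k=1$, where $k_1^{a,M}(x)=x-a(M-1)/(1+a)$, gives
$$
k_n^{a,N}(1)=\frac{(-1)^n}{n!}\left(\frac{a}{1+a}\right)^{n-1}(N-n)_{n-1}\,k_1^{a,-N+2}(-n),
$$
so the Christoffel-transform criterion $\det\Lambda_n\neq0$ for $\pp\rho_{a,N}$ is (generically) equivalent to $k_1^{a,-N+k+1}(-n)\neq0$, not to $k_1^{a,-N-k-1}(-n)\neq0$. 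The exponent $-N-k-1$ in the printed statement appears to be a sign slip carried over from the Meixner pattern, where the parameter enters the analogue of (\ref{tkrw}) as $c-k-1$ rather than $N+k+1$. So your proof is correct and is the intended one, provided the hypothesis of the Corollary is read as $k_{k}^{a,-(N-k-1)}(-n)\neq0$, $n\ge1$.
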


\section{Hahn case}
For $\alpha+c-N\not =-1, -2, \cdots$ we write $(h_{n}^{\alpha ,c,N})_n$ for the sequence of monic Hahn polynomials defined by
\begin{equation}\label{defhap}
h_{n}^{\alpha ,c,N}(x)=\sum_{j=0}^n\frac{(-n)_j(-x)_j(1-N+j)_{n-j}(c+j)_{n-j}}{(n+\alpha +c-N+j)_{n-j}j!}
\end{equation}
(we have taken a slightly different normalization from the one used in \cite{NSU}, pp. 30-53, from where
the next formulas can be easily derived; for more details see \cite{KLS}, pp, 204-211).
Hahn polynomials can be defined using the hypergeometric function ${}_3F_2$:
\begin{align}\nonumber
h_{n}^{\alpha ,c,N}(x)&=\frac{(c)_n(1-N)_n}{(n+\alpha+c-N)_n}\sum_{j=0}^n\frac{(-n)_j(-x)_j(n+\alpha +c-N)_{j}}{(1-N)_{j}(c)_{j}j!}
\\\label{defha3F2}
&=\frac{(c)_n(1-N)_n}{(n+\alpha+c-N)_n}\pFq{3}{2}{-n,-x,n+\alpha+c-N}{c,1-N}{1}.
\end{align}
Hahn polynomials are eigenfunctions of the following second order difference operator
\begin{align}\label{sodeha}
D_{\alpha,c,N}&=x(x-\alpha)\Sh_{-1}+(x+c)(x-N+1)\Sh_1\\ \nonumber &\hspace{1.5cm}+[-2x^2+(\alpha-c+N-1)x+\alpha+N(c-1)-1]\Sh_0,\\\nonumber
D_{\alpha,c,N}(h_{n}^{\alpha,c,N})&=(n+1)(n+\alpha+c-N-1)h_{n}^{\alpha,c,N},\quad n\ge 0.
\end{align}

They satisfy the following three term recurrence formula ($h_{-1}^{\alpha,c,N}=0$)
\begin{equation}\label{trha}
xh_n=h_{n+1}+b_nh_n+c_nh_{n-1}, \quad n\ge 0
\end{equation}
where
\begin{align*}
b_n&=\frac{c(N-1)(\alpha +c-N-1)+n(\alpha-c+N-1)(n+\alpha+c-N)}{(2n+\alpha +c-N-1)(2n+\alpha +c-N+1)},\\
c_n&=\frac{n(N-n)(n+\alpha +c-N-1)(n+\alpha-N)(n+c-1)(n+\alpha+c-1)}{(2n+\alpha +c-N-2)(2n+\alpha+c-N-1)^2(2n+\alpha +c-N)}
\end{align*}
(to simplify the notation we remove the parameters in some formulas).

Assume that $\alpha+c-N+1, \alpha-N+1, \alpha +c, c\not =0,-1, -2, \cdots $.
If, in addition, $N$ is not a positive integer, then the Hahn polynomials are always orthogonal with respect to
a moment functional $\rho_{\alpha,c,N}$, which we normalize by taking
$$
\langle \rho_{\alpha,c,N},1 \rangle=\frac{\Gamma(\alpha+1-N)\Gamma(c)\Gamma(\alpha+c)}{\Gamma(\alpha+c+1-N)}.
$$
When $N$ is a positive integer,  the first $N$ Hahn polynomials are orthogonal
with respect to the  Hahn measure
\begin{equation}\label{haw}
\rho_{\alpha,c,N}=\Gamma(N)\sum _{x=0}^{N-1} \frac{\Gamma(\alpha-x)\Gamma(x+c)}{\Gamma(N-x)x!}\delta _x
\end{equation}
(which it is positive when $\alpha>N-1$ and $c>0$).

We also need the so-called (monic) dual Hahn polynomials.
\begin{equation}\label{defdualhap}
h_{k}^{*,\alpha ,c,N}(x)=\sum_{j=0}^k\frac{(-k)_j(1-N+j)_{k-j}(c+j)_{k-j}}{j!}s_{j,N-\alpha-c}(x),
\end{equation}
where $s_{j,N-\alpha-c}$, $j\ge 0$, are the polynomials defined by (\ref{defsj}), that is, $s_{0,N-\alpha-c}=1$ and for $j\ge 1$
$$
s_{j,N-\alpha-c}(x)=(-1)^j\prod_{i=0}^{j-1}[x+i(N-\alpha-c-i)].
$$
Using Lemma \ref{nlp} we can also write the dual Hahn polynomials in terms of the hypergeometric function ${}_3F_2$
\begin{align}\nonumber
h_{k}^{*,\alpha,c,N}(x(x+\alpha+c-N))&=(c)_k(1-N)_k\sum_{j=0}^k\frac{(-k)_j(-x)_j(x+\alpha +c-N)_{j}}{(1-N)_{j}(c)_{j}j!}
\\\label{defdha3F2}
&=(c)_k(1-N)_k\pFq{3}{2}{-k,-x,x+\alpha+c-N}{c,1-N}{1}.
\end{align}
(\ref{defha3F2}) and (\ref{defdha3F2}) give the duality of the Hahn and dual Hahn polynomials with respect to the sequences
$n$ and $n(n+\alpha+c-N)$, that is, for $k,n\ge 0$
$$
(c)_n(1-N)_nh_k^{*,\alpha,c,N}(n(n+\alpha+c-N))=(c)_k(1-N)_k(n+a+c-N)_nh_n^{\alpha,c,N}(k).
$$

The computations in the Hahn case are technically more involved because
the eigenvalue sequence $(\theta _n)_n$ (\ref{sodeha}) for the Hahn polynomials (with respect to its  second order difference operator)
is quadratic in $n$.
We first need some notation.

We write $(\sigma_n)_n$ and $(\theta_n)_n$ for the sequences
\begin{align}\label{osigj}
\theta_n&=(n+1)(n+\alpha+c-N-1), \\\label{osigj2}
\sigma_n&=2n+\alpha+c-N-2,
\end{align}
($(\sigma_n)_n$ is
one of sequences which define the first and four $\D$-operators in Lemma \ref{lTha} below).
For $j\ge 0$, we also consider the sequence
$$
u_j(n)=(n)_j(-n-\alpha -c+N+2)_j,
$$
and the polynomials $r_0=1$ and for $j\ge 1$
\begin{equation}\label{defrj}
r_j(x)=(-1)^j\prod _{i=0}^{j-1}[x+i(\alpha +c-N-2-i)].
\end{equation}
Since the polynomial $r_j$ has degree just $j$, all polynomial
$P\in  \PP$ can be written as $P(x)=\sum_{j=0}^{\deg(P)}w_jr_j(x)$ for certain numbers $w_j$, $j=0,\cdots , \deg(P)$.

We will need the following technical Lemma.

\begin{lemma}\label{ulll}
Let $\alpha, c, N \in \RR$ be real numbers. Then for $j,n\ge 0$
\begin{align} \label{l721}
u_j(n)&=r_{j}(\theta_{n-1}), \\\label{l722}
\sigma _nu_j(n)+\sigma _{n+1}u_j(n+1)&=-2\frac{u_{j+1}(n+1)-u_{j+1}(n)}{j+1}\\\nonumber &\hspace{-1cm}+(-\alpha-c+N+2(j+1))(u_{j}(n+1)-u_{j}(n)).
\end{align}
\end{lemma}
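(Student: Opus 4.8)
The first identity (\ref{l721}) is just a reformulation of Lemma \ref{nlp}. Comparing the definition (\ref{defrj}) of $r_j$ with the polynomials $s_{j,u}$ of (\ref{defsj}), one sees that $r_j=s_{j,u}$ for $u=\alpha+c-N-2$. With this value of $u$ we have $\theta_{n-1}=n(n+\alpha+c-N-2)=n(n+u)=(-n)\bigl((-n)-u\bigr)$, so Lemma \ref{nlp} applied at the point $x=-n$ gives
\[
r_j(\theta_{n-1})=s_{j,u}\bigl((-n)((-n)-u)\bigr)=(-(-n))_j((-n)-u)_j=(n)_j(-n-\alpha-c+N+2)_j=u_j(n),
\]
which is exactly (\ref{l721}).

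For the second identity (\ref{l722}) I would argue directly, abbreviating $A=\alpha+c-N$ (so that $\sigma_n=2n+A-2$, $\sigma_{n+1}=2n+A$ and $u_j(n)=(n)_j(-n-A+2)_j$). The key observation is that the four products $u_j(n)$, $u_j(n+1)$, $u_{j+1}(n)$, $u_{j+1}(n+1)$ share a common factor. For $j\ge1$ (the case $j=0$ being immediate, since then $u_0\equiv1$), set $V=(n+1)_{j-1}(-n-A+2)_{j-1}$ and split off the extreme factors of each Pochhammer symbol; this yields
\begin{align*}
u_j(n)&=V\,n(-n-A+j+1), & u_j(n+1)&=V\,(n+j)(-n-A+1),\\
u_{j+1}(n)&=V\,n(n+j)(-n-A+j+1)(-n-A+j+2),\\
u_{j+1}(n+1)&=V\,(n+j)(n+j+1)(-n-A+1)(-n-A+j+1).
\end{align*}

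From these expressions the two differences on the right of (\ref{l722}) factor transparently: pulling out $(n+j)(-n-A+j+1)$ one finds $u_{j+1}(n+1)-u_{j+1}(n)=V(j+1)(n+j)(-n-A+j+1)(-2n-A+1)$ and $u_j(n+1)-u_j(n)=Vj(-2n-A+1)$. In particular the denominator $j+1$ in (\ref{l722}) cancels the factor $j+1$ coming from the first difference. Substituting into the right-hand side and cancelling the common factor $V$, (\ref{l722}) reduces to the single polynomial identity
\[
\sigma_n\,n(-n-A+j+1)+\sigma_{n+1}(n+j)(-n-A+1)=(2n+A-1)\bigl[2(n+j)(-n-A+j+1)-j(-A+2j+2)\bigr]
\]
in $n$ (of degree three, with $A$ and $j$ as parameters), which I would finish by expanding both sides and matching coefficients of $n^3,n^2,n^1,n^0$.

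The only real work is the bookkeeping. Identifying the common factor $V$ is the one genuinely useful idea; once it is in hand, the four factorizations and the two difference computations are mechanical, and the final degree-three comparison is routine. I expect the step most prone to sign errors to be the factorization of $u_{j+1}(n+1)-u_{j+1}(n)$ and the cancellation of $j+1$, so I would cross-check the reduced identity against the small cases $j=0$ and $j=1$ before trusting the general computation.
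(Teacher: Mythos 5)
Your proof is correct and follows essentially the same route as the paper: identity (\ref{l721}) is obtained exactly as there, by applying Lemma \ref{nlp} with $x=-n$ and $u=\alpha+c-N-2$, and for (\ref{l722}) the paper simply asserts ``a direct computation,'' which is precisely what you carry out (your factorizations, the two differences $u_{j+1}(n+1)-u_{j+1}(n)=V(j+1)(n+j)(-n-A+j+1)(-2n-A+1)$ and $u_j(n+1)-u_j(n)=Vj(-2n-A+1)$, and the final cubic identity all check out). The organization via the common factor $V$ is a clean way to tame the bookkeeping, but it is not a different method.
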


\begin{proof}

(\ref{l721}) is just Lemma \ref{nlp} for $x=-n$ and $u=\alpha +c-N-2$.

(\ref{l722}) of the Lemma can be checked by a direct computation.

\end{proof}

\bigskip

Proceeding as in the previous Sections, we have identified four $\D$-operators of type 2 for Hahn polynomials (the proof is omitted).

\begin{lemma}\label{lTha}
For $\alpha , c, N$ satisfying $\alpha+c-N\not =-1, -2, \cdots$, consider the Hahn polynomials $(h_{n}^{\alpha ,c,N})_n$ (\ref{defhap}).
Then, the operators $\D _i$, $i=1, 2, 3, 4$ defined by (\ref{defTo2}) from the sequences ($n\ge 0$)
\begin{align*}
\varepsilon _{n,1}&=\frac{n(N-n)(n+\alpha-N)}{(2n+\alpha+c-N-1)(2n+\alpha+c-N-2)},\quad &\sigma_{n,1}&=2n+\alpha+c-N-2,\\
\varepsilon _{n,2}&=\frac{n(n+\alpha-N)(n+\alpha+c-1)}{(2n+\alpha+c-N-1)(2n+\alpha+c-N-2)},&\sigma_{n,2}&=-(2n+\alpha+c-N-2),\\
\varepsilon _{n,3}&=\frac{-n(N-n)(n+c-1)}{(2n+\alpha+c-N-1)(2n+\alpha+c-N-2)}, &\sigma_{n,3}&=-(2n+\alpha+c-N-2),\\
\varepsilon _{n,4}&=\frac{-n(n+c-1)(n+\alpha+c-1)}{(2n+\alpha+c-N-1)(2n+\alpha+c-N-2)}, &\sigma_{n,4}&=2n+\alpha+c-N-2,
\end{align*}
are $\D$-operators for $(h_{n}^{\alpha ,c,N})_n$ and the algebra $\A$  (\ref{algdiff}) of difference
operators  with polynomial coefficients.
More precisely
\begin{align*}
\displaystyle \D_1&=(-x+N-1)\Delta-\frac{\alpha+c-N}{2}I, \\
\displaystyle \D_2&=(x-\alpha)\nabla+\frac{\alpha+c-N}{2}I,\\
\displaystyle \D_3&=x\nabla+\frac{\alpha+c-N}{2}I,\\
\displaystyle \D_4&=-(x+c)\Delta-\frac{\alpha+c-N}{2}I.
\end{align*}

\end{lemma}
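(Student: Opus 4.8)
The plan is to show that each abstract operator $\D_i$ defined through (\ref{defTo2}) coincides with the displayed concrete difference operator, which manifestly belongs to the algebra $\A$ (\ref{algdiff}); since operators on $\PP$ agree precisely when they agree on the basis $(h_n^{\alpha,c,N})_n$, it suffices to verify, for each $i$ and each $n\ge 0$, that the concrete operator applied to $h_n^{\alpha,c,N}$ reproduces the right-hand side of (\ref{defTo2}). As a first sanity anchor I would compute the leading coefficient: for $\D_1$, writing $\Delta h_n^{\alpha,c,N}=nx^{n-1}+\cdots$ and multiplying by $(-x+N-1)$ gives a degree-$n$ polynomial whose $x^n$-coefficient is $-n$, so the coefficient of $h_n^{\alpha,c,N}$ in $\D_1(h_n^{\alpha,c,N})$ is $-n-\tfrac{\alpha+c-N}{2}=-\tfrac12\sigma_{n+1}$, exactly the $j=0$ term prescribed by (\ref{defTo2}) (recall $\sigma_{n+1}=2n+\alpha+c-N$). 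The substance of the proof is then to identify the remaining coefficients, those of $h_{n-j}^{\alpha,c,N}$ with $j\ge1$, and match them with $(-1)^{j+1}\sigma_{n+1-j}\,\varepsilon_{n,i}\cdots\varepsilon_{n-j+1,i}$.

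For the main computation I would use the hypergeometric form (\ref{defha3F2}) together with the elementary identity $\Delta(-x)_j=-j(-x)_{j-1}$ to write $(-x+N-1)\Delta h_n^{\alpha,c,N}$ explicitly in the falling-factorial basis $\{(-x)_m\}$, keeping the degree bookkeeping transparent via $(-x+N-1)(-x)_{j-1}=(-x)_j+(N-j)(-x)_{j-1}$. The resulting polynomial must then be re-expanded in the Hahn basis; this is the step where the dual Hahn polynomials enter, since the connection coefficients between $\{(-x)_m\}$ and $\{h_m^{\alpha,c,N}\}$ are governed by the duality (\ref{defha3F2})--(\ref{defdha3F2}). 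Passing through the dual side replaces these connection coefficients by the quantities $u_j(n)=r_j(\theta_{n-1})$ of Lemma \ref{ulll} (see (\ref{l721})), so that the sought coefficient of $h_{n-j}^{\alpha,c,N}$ is expressed through $\sigma$-weighted combinations of the $u_j$.

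The final and most delicate step is the telescoping that collapses these combinations into the prescribed product of $\varepsilon$'s, and this is exactly the purpose of identity (\ref{l722}): the alternating, $\sigma$-weighted sums characteristic of a type-$2$ $\D$-operator produce precisely the combinations $\sigma_n u_j(n)+\sigma_{n+1}u_j(n+1)$, which (\ref{l722}) rewrites as differences of $u_{j+1}$ and $u_j$; these differences telescope so that only the term matching $(-1)^{j+1}\sigma_{n+1-j}\prod_{i=0}^{j-1}\varepsilon_{n-i,1}$ survives. I expect this bookkeeping --- carrying the products of $\varepsilon_{n,i}$ against the $\sigma$-weights and verifying the collapse --- to be the main obstacle, since it is where the specific shape of the four sequences $\varepsilon_{n,i},\sigma_{n,i}$ is forced. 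The same computation applies to $\D_2,\D_3,\D_4$: only the intermediate falling-factorial and duality identities change according to whether $\nabla$ or $\Delta$ and which polynomial coefficient occurs, while the four pairs $(\varepsilon_{n,i},\sigma_{n,i})$ are precisely what makes the telescoping close in each case. Moreover, pairs among the four are linked by the reflection $x\mapsto N-1-x$ of the Hahn family, which preserves $\alpha+c-N$, so in practice two of the four computations transfer to the other two by symmetry.
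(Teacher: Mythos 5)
Your reduction is sound as far as it goes: since $\D_1$ is defined by linearity from its action on the basis $(h_n^{\alpha,c,N})_n$, it does suffice to check that the concrete first-order operator reproduces the right-hand side of (\ref{defTo2}) on each $h_n^{\alpha,c,N}$, and your leading-coefficient computation correctly recovers the $-\tfrac12\sigma_{n+1}$ term (likewise for $\D_2,\D_3,\D_4$). The symmetry remark pairing the four operators also matches the paper's own observation that the first and third (and the second and fourth) are related by the reflection of the Hahn family. Note that the paper omits the proof of this lemma, indicating only that it proceeds as in Lemmas \ref{lTch} and \ref{lTme}, i.e.\ in the opposite direction to yours: one evaluates the defining sum (\ref{defTo2}) in closed form using structural formulas for the family (connection formulas between parameter-shifted polynomials and first-order shift relations) and recognizes the result as a first-order difference operator applied to $h_n^{\alpha,c,N}$.

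The genuine gap is in your treatment of the coefficients of $h_{n-j}^{\alpha,c,N}$ for $j\ge 1$, which is the entire content of the lemma. You must show that the coefficient of $h_{n-j}^{\alpha,c,N}$ in $(-x+N-1)\Delta h_n^{\alpha,c,N}$ equals $(-1)^{j+1}\sigma_{n+1-j}\,\varepsilon_{n,1}\cdots\varepsilon_{n-j+1,1}$, where the product evaluates to
$\frac{(n-j+1)_j\,(N-n)_j\,(n-j+1+\alpha-N)_j}{(2n-2j+\alpha+c-N)_{2j}}$;
these are nontrivial connection coefficients between parameter-shifted Hahn families and must actually be computed (for instance by pairing against $h_{n-j}^{\alpha,c,N}$ with respect to the Hahn weight and summing by parts, or by combining the forward-shift relation for $\Delta h_n^{\alpha,c,N}$ with the three-term recurrence and a two-parameter connection formula). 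Your proposal defers this to ``duality plus Lemma \ref{ulll} plus telescoping,'' but Lemma \ref{ulll} is not the right tool: identities (\ref{l721}) and (\ref{l722}) concern the quantities $u_j(n)=(n)_j(-n-\alpha-c+N+2)_j=r_j(\theta_{n-1})$ and the combination $\sigma_nu_j(n)+\sigma_{n+1}u_j(n+1)$, and in the paper they serve exclusively to verify the hypotheses of Lemma \ref{fl2v} in the proof of Theorem \ref{th5.1ha} (building $P_1$ from $P_2$ and telescoping the eigenvalues $\lambda_n$); they are not connection coefficients between the falling-factorial basis and the Hahn basis, and the products $\varepsilon_{n,1}\cdots\varepsilon_{n-j+1,1}$ above are of a different shape from $u_j(n)$. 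As written, the key identity is asserted rather than proved, so the argument does not yet establish the lemma.
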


Each one of the $\D$-operators displayed in the previous Lemma together with Lemma \ref{fl1v} gives rise to the corresponding
class of polynomials satisfying higher order difference equations.
We only proceed with the first and second $\D$-operators because due to the symmetries of Hahn polynomials,
the examples generated from the first and third operators are
essentially the same, as well as the examples generated from the second and the fourth operators.

\begin{theorem}\label{th5.1ha} Let $\alpha , c, N$ be real numbers satisfying $\alpha+c-N\not =-1,-2,\cdots $.
Let $P_2$ be an arbitrary polynomial of degree
 $k\ge 1$ which we write in the form
$$
P_2(x)=\sum_{j=0}^{k}w_jr_j(x),
$$
for certain numbers $w_j$, $j=0,\cdots , k$. Consider also the polynomial $P_1$ of degree $k+1$ defined by
$$
P_1(x)=(\alpha+c-N)P_2(x)+2(x-\alpha-c+N+1)\sum_{j=0}^k\frac{w_j}{j+1}r_j(x).
$$
Consider next de sequences of numbers $(\gamma_n)_n$, $(\beta_{n,i})_n$, $i=1,2$, and $(\lambda_n)_n$ defined by
\begin{align}\nonumber
\gamma_n&=\sum _{j=0}^k w_ju_j(n),\quad n\ge 0,\\\label{obnj}
\beta_{n,1}&=\frac{n(N-n)(n+\alpha-N)}{(2n+\alpha+c-N-1)(2n+\alpha+c-N-2)}\frac{\gamma_{n+1}}{\gamma_n},\quad n\ge 1,\\\label{obnj2}
\beta_{n,2}&=\frac{n(n+\alpha-N)(n+\alpha+c-1)}{(2n+\alpha+c-N-1)(2n+\alpha+c-N-2)}\frac{\gamma_{n+1}}{\gamma_n},\quad n\ge 1,\\\label{lnj}
 \lambda_n&=\frac{\sigma _n\gamma_n+P_1(\theta_{n-1})}{2}, \quad n\ge 1,\quad \lambda_0=\frac{P_1(\theta_0)-\sigma_1P_2(\theta_0)}{2},
\end{align}
where $(\theta_n)_n$ and $(\sigma_n)_n$ are defined by (\ref{osigj}) and (\ref{osigj2}), and
we implicitly assume that $\gamma_n\not =0$, $n\ge 1$. Define finally the sequences of polynomials $q_{0,i}=1$, and for $n\ge 1$
\begin{equation}\label{qnnoj}
q_{n,i}=h_{n}^{\alpha,c,N}+\beta_{n,i}h_{n-1}^{\alpha,c,N},\quad i=1,2,
\end{equation}
where $h_{n}^{\alpha,c,N}$ is the Hahn polynomial (\ref{defhap}).
Consider the second order difference operator $D_{\alpha,c,N}$ (\ref{sodeha}) with respect to which
the polynomials $(h_{n}^{\alpha,c,N})_n$ are eigenfunctions. Write finally $D_i$, $i=1,2$, for the difference operators of order $2k+2$
and genre $(-k-1,k+1)$
\begin{align*}
D_1&=\frac{1}{2}P_1(D_{\alpha,c,N})+\left(-\frac{\alpha+c-N}{2}+(-x+N-1)\Delta \right) P_2(D_{\alpha,c,N}),\\
D_2&=\frac{1}{2}P_1(D_{\alpha,c,N})-\left(\frac{\alpha+c-N}{2}+(x-\alpha )\nabla \right) P_2(D_{\alpha,c,N}).
\end{align*}
Then $D_i(q_{n,i})=\lambda_nq_{n,i}$, $n\ge 0$, $i=1,2$.
\end{theorem}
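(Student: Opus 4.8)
The plan is to deduce this theorem from Lemma \ref{fl2v}, applied with the operator $D_p=D_{\alpha,c,N}$ of \eqref{sodeha}, the eigenvalues $\theta_n$ of \eqref{osigj}, and, for $i=1,2$, the $\D$-operator $\D_i$ of Lemma \ref{lTha} determined by the sequences $(\varepsilon_{n,i})_n$ and $(\sigma_{n,i})_n$ listed there. The whole proof then amounts to checking that the explicit sequences $(\gamma_n)_n$, $(\beta_{n,i})_n$, $(\lambda_n)_n$ and the explicit polynomials $P_1,P_2$ of the statement are precisely the objects produced by that Lemma, so that its conclusion $D_q(q_{n,i})=\lambda_nq_{n,i}$ with $D_q=\tfrac12P_1(D_p)+\D_iP_2(D_p)$ becomes the assertion of the theorem.

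First I would identify $(\gamma_n)_n$ and $(\beta_{n,i})_n$. Writing $P_2=\sum_{j=0}^kw_jr_j$ and using \eqref{l721} of Lemma \ref{ulll}, namely $u_j(n)=r_j(\theta_{n-1})$, one gets immediately
\[
\gamma_n=\sum_{j=0}^kw_ju_j(n)=\sum_{j=0}^kw_jr_j(\theta_{n-1})=P_2(\theta_{n-1}),
\]
so that $\gamma_{n+1}=P_2(\theta_n)$, which is exactly the normalisation required in Lemma \ref{fl2v}. Comparing the definitions \eqref{obnj}, \eqref{obnj2} of $\beta_{n,i}$ with the values of $\varepsilon_{n,i}$ in Lemma \ref{lTha} then shows at once that $\beta_{n,i}=\varepsilon_{n,i}\gamma_{n+1}/\gamma_n$, as demanded by \eqref{defbetng2}.

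The technical heart of the argument is to check that the sequence $(\lambda_n)_n$ of \eqref{lnj} is the one governed by Lemma \ref{fl2v}, i.e. that it satisfies both $\lambda_n-\lambda_{n-1}=\sigma_n\gamma_n$ and $\lambda_{n+1}+\lambda_n=P_1(\theta_n)$ with the explicit $P_1$ of the statement. Rewriting \eqref{lnj} as $2\lambda_n=\sigma_n\gamma_n+P_1(\theta_{n-1})$, both relations reduce (after subtracting, resp. adding, consecutive instances) to the single identity
\[
P_1(\theta_n)-P_1(\theta_{n-1})=\sigma_n\gamma_n+\sigma_{n+1}\gamma_{n+1},\qquad n\ge 1,
\]
together with the initial value, which is exactly the reason for the separate form chosen for $\lambda_0$. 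To prove this identity I would expand the right-hand side as $\sum_jw_j\bigl(\sigma_nu_j(n)+\sigma_{n+1}u_j(n+1)\bigr)$ and apply \eqref{l722} of Lemma \ref{ulll}; using $u_j(n)=r_j(\theta_{n-1})$ once more, the right-hand side collapses into $G(\theta_n)-G(\theta_{n-1})$, where $G$ is a fixed combination of the $r_j$ and $r_{j+1}$. The recursion $r_{j+1}(x)=-[x+j(\alpha+c-N-2-j)]\,r_j(x)$, read off from \eqref{defrj}, then lets me re-express $G$ through the $r_j$ alone, and a coefficient-by-coefficient comparison identifies $G$ with the polynomial $P_1=(\alpha+c-N)P_2+2(x-\alpha-c+N+1)\sum_j\tfrac{w_j}{j+1}r_j$ of the statement. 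This bookkeeping with the $r_j$ is where essentially all the work sits and is the main obstacle; everything else is formal.

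Finally, the case $i=1$ follows verbatim from Lemma \ref{fl2v}, since $\sigma_{n,1}=\sigma_n$ and $\D_1=(-x+N-1)\Delta-\tfrac{\alpha+c-N}{2}I$, whence $D_q=\tfrac12P_1(D_p)+\D_1P_2(D_p)=D_1$. For $i=2$ one has $\sigma_{n,2}=-\sigma_n$, so Lemma \ref{fl2v} applies with the sequence $(-\lambda_n)_n$ and the polynomial $-P_1$, producing the operator $-\tfrac12P_1(D_p)+\D_2P_2(D_p)$ with eigenvalue $-\lambda_n$; multiplying through by $-1$ gives exactly $D_2=\tfrac12P_1(D_{\alpha,c,N})-\bigl(\tfrac{\alpha+c-N}{2}+(x-\alpha)\nabla\bigr)P_2(D_{\alpha,c,N})$ with eigenvalue $\lambda_n$, which is the claim. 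The statement about order $2k+2$ and genre $(-k-1,k+1)$ is then obtained exactly as in the Charlier Theorem \ref{th5.1ch}: since $\deg P_1=k+1$, $\deg P_2=k$ and $D_{\alpha,c,N}$ has genre $(-1,1)$, one computes the coefficients of the extreme shifts $\Sh_{\pm(k+1)}$ and checks that they do not vanish.
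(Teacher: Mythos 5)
Your proposal is correct and follows essentially the same route as the paper: reduce to Lemma \ref{fl2v}, use \eqref{l721} to get $\gamma_{n+1}=P_2(\theta_n)$, collapse conditions (2) and (3) into the telescoping identity $P_1(\theta_n)-P_1(\theta_{n-1})=\sigma_n\gamma_n+\sigma_{n+1}\gamma_{n+1}$ via \eqref{l722}, and identify the resulting combination of the $r_j$ with the stated $P_1$ through the recursion $r_{j+1}(x)=-[x+j(\alpha+c-N-2-j)]r_j(x)$. Your explicit sign-flip argument for $i=2$ (applying the lemma to $-\lambda_n$, $-P_1$ since $\sigma_{n,2}=-\sigma_n$) is a point the paper leaves implicit, and is handled correctly.
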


\begin{proof}

The theorem is  a straightforward consequence of the Lemma \ref{fl2v}.
We have just to identify who the main characters are in this example.

Write
\begin{align*}
\varepsilon_{n,1}&=\frac{n(N-n)(n+\alpha-N)}{(2n+\alpha+c-N-1)(2n+\alpha+c-N-2)},\quad \sigma_{n,1}=\sigma_n,\\
\varepsilon_{n,2}&=\frac{n(n+\alpha-N)(n+\alpha+c-1)}{(2n+\alpha+c-N-1)(2n+\alpha+c-N-2)},\quad \sigma_{n,2}=-\sigma_n.
\end{align*}
Lemma \ref{lTha} gives that the sequences $(\varepsilon_{n,i})_n$ and $(\sigma_{n,i})_n$, $i=1,2$,
define two $\D$-operators of type 2 (\ref{defTo2}) for the polynomials $(h_{n}^{\alpha,c,N})_n$
and that
\begin{align*}
\D_1&=-\frac{\alpha+c-N}{2}I+(-x+N-1)\Delta ,\\
\D_2&=\frac{\alpha+c-N}{2}I+(x-\alpha)\nabla.
\end{align*}

Consider now the polynomial
\begin{equation}\label{alP1}
Q(x)=\sum _{j=0}^k w_j\left( \frac{-2}{j+1}r_{j+1}(x)+(-\alpha-c+N+2(j+1))r_j(x)\right).
\end{equation}
A straightforward computation using (\ref{defrj}) gives that $Q=P_1$.

According to the  Lemma \ref{fl2v} we have just to check that
\begin{enumerate}
\item $\gamma_{n+1}=P_2(\theta_n)$, $n\ge 0$.
\item $\lambda_n-\lambda_{n-1}=\sigma_n\gamma_n$, $n\ge 1$.
\item $\lambda_n+\lambda_{n-1}=P_1(\theta_{n-1})$, $n\ge 1$, where $\theta_n$ are the eigenvalues of $(h_{n}^{\alpha,c,N})_n$
with respect to $D_{\alpha,c,N}$.
\end{enumerate}

Formula (1) is a direct consequence of (\ref{l721}) in Lemma \ref{ulll}.

Using the definition of $\lambda _n$, one can easily see  that for $n\ge 2$ conditions 2 and 3 above are equivalent to
$$
\frac{P_1(\theta_n)-P_1(\theta_{n-1})}{2}=\frac{\sigma_n\gamma_n+\sigma_{n+1}\gamma_{n+1}}{2}.
$$
But this is a direct consequence of (\ref{l722}) in Lemma \ref{ulll} and the formula (\ref{alP1}) for $P_1$.

Conditions 2 and 3 above for $n=1$ can be checked by a straightforward computation.
\end{proof}

\subsection{Hahn I}
For a convenient choice of the polynomial $P_2$, the polynomials $(q_{n,1})_n$ in the Theorem \ref{th5.1ha} turn out to be orthogonal with respect
to a moment functional. To identify that polynomial $P_2$, we need  the family of dual Hahn polynomials
$h_{1,k}^*=h_k^{*,N+c-1,2-c,\alpha +c-1}$. Using (\ref{defdualhap}) and (\ref{defdha3F2}) we get
\begin{align}\label{dhp}
h^*_{1,k}(x)&=\sum _{j=0}^k\frac{(-k)_j(2-\alpha-c+j)_{k-j}(2-c+j)_{k-j}}{j!}r_j(x),\\\nonumber
h^*_{1,k}(x(x+2+N&-\alpha-c))\\\nonumber
&=(2-\alpha-c)_k(2-c)_k\pFq{3}{2}{-k,-x,x+2+N-\alpha-c}{2-\alpha-c,2-c}{1} .
\end{align}
This coincides with the hypergeometric representation for the (monic) dual Hahn polynomials given in \cite{KLS}, p. 208 (with
the notation of \cite{KLS},
$$
h^*_{1,k}(\lambda (x))=(2-\alpha-c)_k(2-c)_kR_k(\lambda (x),1-\alpha -c, N, c-2)
$$
where $\lambda(x)=x(x+2+N-\alpha-c)$).
They satisfy the following second order difference equation in the lattice $x(x+2+N-\alpha-c)$ (see \cite{KLS} pp. 209)
\begin{equation}\label{sodedh}
r(x)\tilde \Sh _{-1}(h^*_{1,k})-(r(x)+s(x))\tilde \Sh _{0}(h^*_{1,k})+s(x)\tilde \Sh _{1}(h^*_{1,k})=ku(x)\tilde \Sh _{0}(h^*_{1,k}),
\end{equation}
where $\tilde \Sh _{l}(p)=p((x+l)(x+l+2+N-\alpha-c))$, $l\in \ZZ$, and
\begin{align}\nonumber
r(x)&= -x(x+N)(x-\alpha+N)(2x-\alpha-c+N+3),\\\label{rsu}
s(x)&=(x-\alpha -c+N+2)(x-\alpha -c +2)(x-c+2)(2x-\alpha-c+N+1) ,\\ \nonumber
u(x)&=(2x-\alpha-c+N+1)(2x-\alpha-c+N+2)(2x-\alpha-c+N+3).
\end{align}
They also satisfy the following first order difference equation (\cite{KLS} pp. 210): if we write $\tilde \Delta=\tilde \Sh _{1}-\tilde \Sh _{0}$, then
\begin{equation}\label{fodedh}
\tilde \Delta (h_{1,k}^{*,c})=k(2x-\alpha-c+N+3)h_{1,k-1}^{*,c-1}(x(x+3+N-\alpha-c)).
\end{equation}
\bigskip
For $k$ a positive integer, and $\alpha, c, N\in \RR$ satisfying
\begin{equation}\label{conacN}
\alpha +c-N+1, \alpha-N+1,\alpha+c-k-1, c-k-1\not\not =0,-1,-2,\cdots ,
\end{equation}
let $\tilde \rho_{k,\alpha ,c,N}$ be the moment functional defined by
\begin{equation}\label{thaw}
\tilde \rho_{k,\alpha ,c,N}=(x+c-1)\cdots (x+c-k)\rho_{\alpha ,c-k-1,N}
\end{equation}
where $\rho_{\alpha ,c,N}$ is the orthogonalizing moment functional for the Hahn polynomials $(h_{n}^{\alpha ,c,N})_n$.
When $N$ is a positive integer we have that
\begin{equation}\label{thaw2}
\tilde \rho_{k,\alpha ,c,N}=\Gamma(N)\sum _{x=0}^{N-1} \frac{\Gamma(\alpha-x)\Gamma(x+c)}{(x+c-k-1)\Gamma(N-x)x!}\delta _x,
\end{equation}
(only the first $N$ Hahn polynomials are then orthogonal).
To simplify the notation we remove the dependence of $\alpha , c, N$ and $k$ and write $\tilde \rho =\tilde \rho_{k,\alpha ,c,N}$.

In order to find the orthogonal polynomials with respect to $\tilde \rho _{k,\alpha ,c,N}$ we need the following Lemma.

\begin{lemma} For $k$ a positive integer, let $\alpha, c, N$ be real numbers satisfying
(\ref{conacN}). Then, we have
$$
\langle \tilde\rho_{k,\alpha ,c,N},h_{n}^{\alpha ,c,N}\rangle =
\frac{(-1)^nn!(N-n)_n\Gamma(c-k-1)\Gamma(\alpha+n-N+1)\Gamma(\alpha+c-k-1)h^*_{1,k}(\theta_n)}
{\Gamma(\alpha+c-N+2n)},
$$
where $(\theta_n)$ is the sequence defined by (\ref{osigj}).
\end{lemma}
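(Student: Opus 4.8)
The plan is to follow the template of Lemmas \ref{chxx} and \ref{lme1x}: set $\xi_n=\langle\tilde\rho_{k,\alpha,c,N},h_n^{\alpha,c,N}\rangle$, let $\zeta_n$ denote the proposed right hand side, show that both sequences satisfy one and the same three term recurrence in $n$, and finally match them at a single point. The structural fact that drives everything is that
\begin{equation*}
(x+c-k-1)\tilde\rho_{k,\alpha,c,N}=\rho_{\alpha,c,N}.
\end{equation*}
Indeed, since $\tilde\rho_{k,\alpha,c,N}=(x+c-1)\cdots(x+c-k)\rho_{\alpha,c-k-1,N}$, multiplying by the extra factor $(x+c-k-1)$ telescopes the Gamma factor $\Gamma(x+c-k-1)$ of the Hahn functional (\ref{haw}) up to $\Gamma(x+c)$ (for integer $N$ this is immediate from (\ref{haw}); in general it follows by iterating the Pearson-type relation $(x+c')\rho_{\alpha,c',N}=\rho_{\alpha,c'+1,N}$), recovering exactly $\rho_{\alpha,c,N}$. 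In particular $\langle(x+c-k-1)\tilde\rho_{k,\alpha,c,N},1\rangle$ is the normalization constant of $\rho_{\alpha,c,N}$.

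First I would produce the recurrence for $\xi_n$. Writing $x=(x+c-k-1)-(c-k-1)$ and using the three term recurrence (\ref{trha}) for the monic Hahn polynomials, one gets $\langle x\tilde\rho,h_n\rangle=\xi_{n+1}+b_n\xi_n+c_n\xi_{n-1}$, while the structural fact gives $\langle x\tilde\rho,h_n\rangle=\langle\rho_{\alpha,c,N},h_n\rangle-(c-k-1)\xi_n$. Since $\langle\rho_{\alpha,c,N},h_n\rangle=0$ for $n\ge1$ (orthogonality), this yields the homogeneous recurrence
\begin{equation*}
\xi_{n+1}+(b_n+c-k-1)\xi_n+c_n\xi_{n-1}=0,\qquad n\ge1,
\end{equation*}
together with the inhomogeneous relation $\xi_1+(b_0+c-k-1)\xi_0=\langle\rho_{\alpha,c,N},1\rangle$ at $n=0$ (recall $c_0=0$).

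The heart of the proof is to show that $\zeta_n$ obeys the same recurrence, and here the duality between Hahn and dual Hahn polynomials enters. Writing $\lambda(x)=x(x+2+N-\alpha-c)$ for the dual Hahn lattice, a direct check gives $\theta_n=\lambda(-(n+1))$, so that $h^*_{1,k}(\theta_n)=h^*_{1,k}(\lambda(-(n+1)))$ and, moreover, $\tilde\Sh_{\pm1}$ at $x=-(n+1)$ produce $h^*_{1,k}(\theta_{n\mp1})$. Substituting $x=-(n+1)$ into the second order difference equation (\ref{sodedh}) for $h^*_{1,k}$ therefore converts it into a three term recurrence among $h^*_{1,k}(\theta_{n-1})$, $h^*_{1,k}(\theta_n)$, $h^*_{1,k}(\theta_{n+1})$, with coefficients $r$, $-(r+s)-ku$ and $s$ evaluated at $-(n+1)$. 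Dividing $\zeta_n$ by its Gamma/Pochhammer prefactor $A_n$ and forming the ratios $A_{n+1}/A_n$ and $A_{n-1}/A_n$, one checks that this recurrence is proportional to the one for $\xi_n$; that is, that $A_{n+1}/A_n$, $b_n+c-k-1$ and $c_nA_{n-1}/A_n$ are proportional to $r(-(n+1))$, $-[(r+s)+ku](-(n+1))$ and $s(-(n+1))$, respectively. This reconciliation of the explicit Hahn coefficients $b_n,c_n$ of (\ref{trha}) with the dual Hahn coefficients (\ref{rsu}) is the main obstacle: it is a routine but lengthy manipulation of Gamma quotients, and the case $n=0$ (where $x=-1$, $\theta_{-1}=0$) must be treated separately to reproduce the inhomogeneous term, using the value of $h^*_{1,k}(0)$.

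Finally I would match the two sequences at a single point. Since both satisfy the homogeneous recurrence for $n\ge1$ and the same inhomogeneous relation at $n=0$, it suffices to prove $\xi_0=\zeta_0$. I would compute $\xi_0=\langle\rho_{\alpha,c-k-1,N},(x+c-1)\cdots(x+c-k)\rangle$ by induction on $k$, peeling off one factor at a time through the relation $(x+c')\rho_{\alpha,c',N}=\rho_{\alpha,c'+1,N}$, exactly as the quantities $\eta_{k,c}$ were handled in the proof of Lemma \ref{lme1x}; on the other side $\zeta_0$ is a multiple of $h^*_{1,k}(\theta_0)$ with $\theta_0=\lambda(-1)$, and the matching of the two $k$-recursions is carried out using the first order difference equation (\ref{fodedh}) for the dual Hahn polynomials together with a small base case. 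Assembling these pieces gives $\xi_n=\zeta_n$ for all $n\ge0$, which is the assertion.
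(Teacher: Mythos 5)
Your proposal is correct and follows essentially the same route as the paper: derive the three term recurrence for $\xi_n$ from (\ref{trha}) and the Christoffel relation between $\tilde\rho$ and $\rho_{\alpha,c,N}$, obtain the matching recurrence for $\zeta_n$ by setting $x=-n-1$ in the dual Hahn difference equation (\ref{sodedh}) via the identification $\theta_n=\lambda(-(n+1))$, and close the argument by proving $\xi_0=\zeta_0$ through a $k$-induction using (\ref{fodedh}). The only differences are expository (your explicit telescoping identity and the remark on $h^*_{1,k}(0)$ at $n=0$), not mathematical.
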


\begin{proof}
We proceed in an analogous way to the Lemmas \ref{chxx} or \ref{lme1x}.
For $n\ge 0$ write
\begin{align*}
\xi_n&=\langle \tilde\rho_{k,\alpha ,c,N},h_{n}^{\alpha ,c,N}\rangle , \\
\zeta_n&=\frac{(-1)^nn!(N-n)_n\Gamma(c-k-1)\Gamma(\alpha+n-N+1)\Gamma(\alpha+c-k-1)h^*_{1,k}(\theta_n)}
{\Gamma(\alpha+c-N+2n)}.
\end{align*}
The three term recurrence relation (\ref{trha}) for $(h_{n}^{\alpha,c,N})_n$ and the normalization for the Hahn weight $\rho_{\alpha,c,N}$ give that
\begin{align}\label{ultdva}
\xi_{1}+(b_0+c-k-1)\xi_{0}-\frac{\Gamma(c)\Gamma(\alpha+1-N)\Gamma(\alpha+c)}{\Gamma(\alpha+c+1-N)}&=0, \\
\label{ultdv}
\xi_{n+1}+(b_n+c-k-1)\xi_{n}+c_n\xi_{n-1}=0, \quad n\ge 1,
\end{align}
where $b_n$ and $c_n$ are the recurrence coefficients of the Hahn polynomials $(h_{n}^{\alpha,c,N})_n$.

We now prove that also
\begin{align}\label{mecme3a}
\zeta_{1}+(b_0+c-k-1)\zeta_{0}-\frac{\Gamma(c)\Gamma(\alpha+1-N)\Gamma(\alpha+c)}{\Gamma(\alpha+c+1-N)}&=0, \\
\label{mecme3}
\zeta_{n+1}+(b_n+c-k-1)\zeta_{n}+c_n\zeta_{n-1}=0, \quad n\ge 1.
\end{align}
After straightforward computations taking into account the definition of $\zeta _n$, (\ref{mecme3}) is equivalent to
\begin{align*}
r(-n-1)h^*_{1,k}(\theta _{n+1})-&(r(-n-1)+s(-n-1)+ku(-n-1))h^*_{1,k}(\theta _{n})\\&+s(-n-1)h^*_{1,k}(\theta _{n-1})=0
\end{align*}
where the polynomials $r, s$ and $u$ are given by (\ref{rsu}). But this follows just by writing  $x=-n-1$
in the second order difference equation (\ref{sodedh}) for the dual Hahn polynomials $(h^*_{1,k})_k$. (\ref{mecme3a}) follows in
a similar way.

Since the sequences $(\xi_n)_n$ and $(\zeta_n)_n$ satisfy the same recurrence relation, it is enough to prove
that $\xi_0=\zeta_0$. If\ we write
$$
\eta_{k,c}=\xi_0=\langle \tilde\rho_{k,\alpha ,c,N},1\rangle =\langle \rho_{\alpha ,c,N},(x+c-1)\cdots (x+c-k)\rangle ,
$$
and proceed as in Lemma \ref{lme1x}, we find that
$$
\eta_{k,c}-k\eta_{k-1,c-1}=\frac{\Gamma(c-1)\Gamma(\alpha-N+1)\Gamma(\alpha+c-1)}
{\Gamma(\alpha+c-N)}.
$$
On the other hand, if we write
$$
\tau _{k,c}=\zeta_0=\frac{\Gamma(c-k-1)\Gamma(\alpha-N+1)\Gamma(\alpha+c-k-1)h_{1,k}^{*,c}(\alpha+c-N-1)}
{\Gamma(\alpha+c-N)},
$$
a simple computation using (\ref{fodedh}) gives
$$
\tau_{k,c}-k\tau_{k-1,c-1}=\frac{\Gamma(c-1)\Gamma(\alpha-N+1)\Gamma(\alpha+c-1)}
{\Gamma(\alpha+c-N)}.
$$
From where we easily get $\xi_0=\zeta_0$.

\end{proof}

We are now ready to prove that the orthogonal polynomials with respect to $\tilde \rho _{k,a,c,N}$ (\ref{thaw}) are a particular case
of Theorem \ref{th5.1ha}.

\begin{theorem} \label{lm61ha}
For $k\ge 1$, let $\alpha, c, N$ be real numbers satisfying (\ref{conacN})
and $h^*_{1,k}(n(n-2-N+a+c))\not =0$, $n\ge 0$, where $h_{1,k}^*$, $k\ge 1$, are the dual Hahn polynomials (\ref{dhp}).
We then define the polynomials $P_2$ and $P_1$, of degrees $k$ and $k+1$, respectively, by
\begin{align}\label{defp1ha}
P_2(x)&=h^*_{1,k}(x) ,\\\label{defp2ha}
P_1(x)&=(\alpha +c-N)h^*_{1,k}(x)+2(x-\alpha-c+N+1)\\ \nonumber
&\hspace
{2cm} \times\sum _{j=0}^k\frac{(-k)_j(2-\alpha-c+j)_{k-j}(2-c+j)_{k-j}}{(j+1)!}r_j(x).
\end{align}
We also define the sequences of numbers $(\lambda _n)_n$, $(\gamma _n)_{n\ge 1}$ and $(\beta _n)_{n\ge 1}$  by
\begin{align}\label{eighak}
 \lambda_n&=\frac{\sigma _n\gamma_n+P_1(\theta_{n-1})}{2}, \quad n\ge 1,\quad \lambda_0=
 \frac{P_1(\theta_0)-\sigma_1P_2(\theta_0)}{2},\\\label{defdelha}
\gamma_n&=h^*_{1,k}(n(n-2-N+a+c)),\\
\label{defbetha}
\beta_n&=\frac{n(N-n)(n+\alpha-N)}{(2n+\alpha+c-N-1)(2n+\alpha+c-N-2)}\frac{\gamma_{n+1}}{\gamma_n},
\end{align}
and the sequence of polynomials $(q_n)_n$ by $q_0=1$, and
\begin{equation}\label{defqha}
q_n(x)=h_{n}^{a,c,N}(x)+\beta _nh_{n-1}^{a,c,N}(x), \quad n\ge 1,
\end{equation}
where $(\theta_n)_n$ and $(\sigma_n)_n$ are defined by (\ref{osigj}) and (\ref{osigj2}).
 If $N$ is not a positive integer, then the polynomials $(q_n)_n$ are orthogonal with respect
to the moment functional $\tilde \rho $ (\ref{thaw}). If $N$ is a positive integer, then the polynomials
$(q_n)_{0\le n\le N}$ are orthogonal with respect to the measure $\tilde \rho$ (\ref{thaw2}). Moreover,
if we consider the difference operator of order $2k+2$ and genre $(-k-1,k+1)$ defined by
$$
D=\frac{1}{2}P_1(D_{\alpha,c,N})+\left(-\frac{\alpha+c-N}{2}+(-x+N-1)\Delta \right) P_2(D_{\alpha,c,N}),
$$
where $D_{\alpha ,c,N}$ is the second order difference operator for the Hahn polynomials (\ref{sodeha}), then
$D(q_n)=\lambda_nq_n$.
\end{theorem}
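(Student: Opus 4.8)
The plan is to establish the two assertions separately. The orthogonality of $(q_n)_n$ with respect to $\tilde\rho$ will follow from Lemma \ref{addel} together with the inner-product formula in the Lemma immediately preceding this Theorem, while the difference equation $D(q_n)=\lambda_nq_n$ will follow by specializing Theorem \ref{th5.1ha} to the choice $P_2=h^*_{1,k}$. No new machinery is needed; the work is to identify the main characters and match the two prescriptions for $\beta_n$.

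For the orthogonality I would cast the situation into the notation of Lemma \ref{addel}, taking $\nu=\tilde\rho_{k,\alpha,c,N}$, $M=0$, $\lambda=k+1-c$ and $p_n=h_n^{\alpha,c,N}$. One must then identify $\mu=(x-\lambda)\nu=(x+c-k-1)\tilde\rho_{k,\alpha,c,N}$. Using the elementary identity $(x+c')\rho_{\alpha,c',N}=\rho_{\alpha,c'+1,N}$ (immediate from the weights (\ref{haw})), the extra factor $(x+c-k-1)$ together with the $k$ factors $(x+c-1)\cdots(x+c-k)$ built into $\tilde\rho_{k,\alpha,c,N}$ (\ref{thaw}) telescopes, raising the parameter $c-k-1$ back up to $c$, so that $\mu=\rho_{\alpha,c,N}$. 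Hence the $h_n^{\alpha,c,N}$ are indeed the orthogonal polynomials for $\mu$. The preceding Lemma supplies $\alpha_n=\langle\tilde\rho_{k,\alpha,c,N},h_n^{\alpha,c,N}\rangle=A_nh^*_{1,k}(\theta_n)$, where $A_n$ is the Gamma-function prefactor. Since $M=0$, Lemma \ref{addel} gives $\beta_n=-\alpha_n/\alpha_{n-1}$, so the whole point reduces to checking that $-A_n/A_{n-1}$ equals the rational factor in (\ref{defbetha}); this is a routine cancellation of Pochhammer and Gamma ratios, after which $\beta_n=\varepsilon_{n,1}\,h^*_{1,k}(\theta_n)/h^*_{1,k}(\theta_{n-1})$. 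Noting that $n(n-2-N+\alpha+c)=\theta_{n-1}$, so that $\gamma_n=h^*_{1,k}(\theta_{n-1})$, this is exactly (\ref{defbetha}).

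For the difference equation I would simply observe that the present data form the instance of Theorem \ref{th5.1ha} obtained by choosing $P_2=h^*_{1,k}$. Writing $h^*_{1,k}(x)=\sum_{j=0}^kw_jr_j(x)$ with $w_j=\frac{(-k)_j(2-\alpha-c+j)_{k-j}(2-c+j)_{k-j}}{j!}$ (the expansion (\ref{dhp})), the polynomial $P_1$ in (\ref{defp2ha}) is precisely $(\alpha+c-N)P_2+2(x-\alpha-c+N+1)\sum_j\frac{w_j}{j+1}r_j$, i.e. the $P_1$ prescribed in Theorem \ref{th5.1ha}. By (\ref{l721}) of Lemma \ref{ulll}, $u_j(n)=r_j(\theta_{n-1})$, whence the sequence $\gamma_n=\sum_jw_ju_j(n)$ of Theorem \ref{th5.1ha} coincides with $h^*_{1,k}(\theta_{n-1})$, matching (\ref{defdelha}); likewise $\beta_{n,1}$ and $\lambda_n$ there reproduce (\ref{defbetha}) and (\ref{eighak}). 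The conclusion $D(q_n)=\lambda_nq_n$ with the stated first $\D$-operator is then immediate.

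The delicate point will be bookkeeping rather than conceptual: one must confirm that the two a priori different recipes for $\beta_n$ agree, namely the ratio of inner products furnished by the Christoffel mechanism of Lemma \ref{addel} and the factor $\varepsilon_{n,1}$ times a ratio of $\gamma$'s coming from the $\D$-operator of Theorem \ref{th5.1ha}. This is exactly the identity $-A_n/A_{n-1}=\varepsilon_{n,1}$, where the specific Gamma-function normalization of the inner-product Lemma is forced to reproduce the coefficient $\varepsilon_{n,1}$ of the first Hahn $\D$-operator; it is a direct computation. The only further care concerns the range of $n$: for $N$ not a positive integer all degrees are allowed, whereas for $N$ a positive integer the measure (\ref{thaw2}) is finitely supported and the argument applies only up to the stated degree, the non-degeneracy hypothesis $h^*_{1,k}(n(n-2-N+\alpha+c))\ne0$ guaranteeing $\gamma_n\ne0$ and hence the existence of each $q_n$; this is handled exactly as in the Charlier, Meixner and Krawtchouk sections.
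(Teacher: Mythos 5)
Your proposal is correct and follows essentially the same route as the paper: the orthogonality is obtained from Lemma \ref{addel} with $M=0$, $\nu=\tilde\rho_{k,\alpha,c,N}$ and $(x+c-k-1)\nu=\rho_{\alpha,c,N}$, combined with the inner-product Lemma to identify $\beta_n=-\alpha_n/\alpha_{n-1}=\varepsilon_{n,1}\gamma_{n+1}/\gamma_n$, while the difference equation is the specialization of Theorem \ref{th5.1ha} to $P_2=h^*_{1,k}$ with $w_j=\frac{(-k)_j(2-\alpha-c+j)_{k-j}(2-c+j)_{k-j}}{j!}$. The paper merely states these two reductions without detail, and your write-up supplies exactly the bookkeeping it leaves implicit.
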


\begin{proof}
The first part of the Theorem can be proved in an analogous way to Theorem \ref{lm51}.

The second part is the particular case of  Theorem \ref{lm61ha} for $i=1$  and
$$
w_j=\frac{(-k)_j(2-\alpha-c+j)_{k-j}(2-c+j)_{k-j}}{j!}.
$$
\end{proof}

Conjecture 1 in the Introduction for the Hahn weight $\rho _{\alpha ,c,N}$
and the finite set $F=\{ 1, 2, \cdots, k\}$ can be deduced as a Corollary of the previous Theorem:

\begin{corollary}\label{cha555} For a positive integer $k$, write $F=\{ 1, 2, \cdots , k\}$ and $\ps(x)=\prod _{j\in F}(x+c+j).$ Let $\alpha , c,N$ be
real numbers satisfying that $\alpha+c-N+k+2,\alpha -N+1, \alpha +c, c\not =0, -1, -2, \cdots$
and that
$$
h_{1,k}^{*,\alpha,c+k+1,N}(n(n+\alpha+c+k-N-1))\not =0.
$$
Then, the orthogonal polynomials with respect to the measure $\ps\rho _{a,c,N}$
are common eigenfunctions of a $(2k+2)$-order difference operator of genre $(-k-1,k+1)$.
\end{corollary}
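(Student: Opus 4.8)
The plan is to reduce Corollary \ref{cha555} to Theorem \ref{lm61ha} by recognizing $\ps\rho_{\alpha,c,N}$ as one of the moment functionals $\tilde\rho_{k,\alpha,c',N}$ already analysed there, for the single shift $c'=c+k+1$. First I would record the elementary identity that makes this work. Substituting $c\to c+k+1$ in the definition (\ref{thaw}) and reindexing the product gives
\begin{align*}
\tilde\rho_{k,\alpha,c+k+1,N}&=(x+c+k)(x+c+k-1)\cdots(x+c+1)\,\rho_{\alpha,c,N}\\
&=\Big(\prod_{j=1}^{k}(x+c+j)\Big)\rho_{\alpha,c,N}=\ps\rho_{\alpha,c,N}.
\end{align*}
Thus the orthogonal polynomials with respect to $\ps\rho_{\alpha,c,N}$ are exactly the polynomials $(q_n)_n$ produced by Theorem \ref{lm61ha} with parameter $c+k+1$ in place of $c$.

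Next I would check that the hypotheses of the Corollary are precisely those of Theorem \ref{lm61ha} after this shift. The four Pochhammer-type conditions (\ref{conacN}) become, with $c$ replaced by $c+k+1$, the conditions $\alpha+c-N+k+2,\ \alpha-N+1,\ \alpha+c,\ c\neq 0,-1,-2,\dots$, which are exactly the ones assumed in the Corollary. Similarly, the dual Hahn non-vanishing hypothesis of Theorem \ref{lm61ha} reads $h^*_{1,k}(n(n-2-N+\alpha+c))\neq 0$ for the family $h^*_{1,k}=h_k^{*,N+c-1,2-c,\alpha+c-1}$; under $c\to c+k+1$ the lattice argument becomes $n(n+\alpha+c+k-N-1)$ and the parameter string becomes that of $h_{1,k}^{*,\alpha,c+k+1,N}$, matching the condition stated in the Corollary.

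Finally I would invoke Theorem \ref{lm61ha} directly: with the shifted parameter it furnishes polynomials $P_1,P_2$ of degrees $k+1$ and $k$ and a difference operator of order $2k+2$ and genre $(-k-1,k+1)$, built from the Hahn operator $D_{\alpha,c+k+1,N}$ and the first $\D$-operator of Lemma \ref{lTha}, for which the orthogonal polynomials of $\tilde\rho_{k,\alpha,c+k+1,N}=\ps\rho_{\alpha,c,N}$ are common eigenfunctions. This is exactly the assertion of the Corollary, and it is the same one-line reduction already used in Corollaries \ref{jodch}, \ref{lm623}, \ref{lm613} and \ref{lmkr3}. The only point requiring care is the bookkeeping: one must propagate the shift $c\to c+k+1$ consistently through (\ref{conacN}) and through the parameters $(N+c-1,2-c,\alpha+c-1)$ of the dual Hahn family, since a single misaligned parameter would break the identification of the non-vanishing hypothesis. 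I expect this parameter-matching, rather than any genuine analytic difficulty, to be the main obstacle.
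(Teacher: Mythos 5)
Your reduction is exactly the paper's intended argument: the paper proves this corollary (as it does the analogous Meixner ones, e.g.\ Corollary \ref{lm623}) by observing that $\ps\rho_{\alpha,c,N}=\tilde\rho_{k,\alpha,c+k+1,N}$ and invoking Theorem \ref{lm61ha} with $c$ replaced by $c+k+1$, and your verification that the hypotheses (\ref{conacN}) and the dual Hahn non-vanishing condition transform into precisely those stated in the corollary is the same bookkeeping the paper leaves implicit. The proposal is correct and takes the same route.
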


\subsection{Hahn II}
There is other choice of the polynomial $P_2$ for which the polynomials $(q_{n,2})_n$ in the Theorem \ref{th5.1ha} are also orthogonal with respect
to a moment functional. Indeed, write $(h^*_{2,k})_k$  for the family of dual Hahn polynomials
$h^*_{2,k}=h_{k}^{*,-\alpha,2-c,-N}$. Using (\ref{defdualhap}) and (\ref{defdha3F2}) we get
\begin{align}\label{dhp2}
h^*_{2,k}(x)&=\sum _{j=0}^k\frac{(-k)_j(2-c+j)_{k-j}(N+1+j)_{k-j}}{j!}r_j(x),\\
h^*_{2,k}(x(x+2&+N-\alpha-c))\\ \nonumber
&=(2-c)_k(N+1)_k\pFq{3}{2}{-k,-x,x+2+N-\alpha-c}{2-c,N+1}{1}.
\end{align}
This coincides with the hypergeometric representation for the dual Hahn polynomials given in \cite{KLS}, p. 208 (with
the notation of \cite{KLS},
$$
h^*_{2,k}(\lambda (x))=(2-c)_k(N+1)_kR_k(\lambda (x),1-c, N-\alpha, -N-1)
$$
where $\lambda(x)=x(x+2+N-\alpha-c)$).

\bigskip
For $k$ a positive integer, and $\alpha, c, N\in \RR$ satisfying
\begin{equation}\label{conacN2}
\alpha +c-N+1, \alpha -N+1,\alpha+c,c-k-1\not =0,-1,-2,\cdots ,
\end{equation}
let $\tilde \rho_{k,\alpha ,c,N}$ be the measure defined by
\begin{equation}\label{thawb}
\tilde \rho_{k,\alpha ,c,N}=(x+1)\cdots (x+k)\rho_{\alpha +k+1,c-k-1,N+k+1}(x+k+1).
\end{equation}
When $N$ is a positive integer we have
\begin{align}\label{thawb2}
\tilde \rho_{k,\alpha ,c,N}=(-1)^kk!&\Gamma(\alpha +k+1)\Gamma(c-k-1)\delta_{-k-1}\\ \nonumber &+
\Gamma(N+k+1)\sum _{x=0}^{N-1} \frac{\Gamma(\alpha-x)\Gamma(x+c)}{(x+k+1)\Gamma(N-x)x!}\delta _x.
\end{align}
To simplify the notation we remove the dependence of $a, c, N$ and $k$ and write $\tilde \rho =\tilde \rho_{k,a,c,N}$.

The proofs of the following Theorems are similar to that of the previous Sections and are omitted.

\begin{lemma} For $k$ a positive integer, let $\alpha, c, N$ be real numbers satisfying
(\ref{conacN2}). Then, we have
$$
\langle \tilde \rho_{k,\alpha ,c,N},h_{n}^{\alpha ,c,N}\rangle =
\frac{(-1)^{n+k}n!\Gamma(c-k-1)\Gamma(\alpha+c+n)\Gamma(\alpha+1+n-N)h^*_{2,k}(\theta_n)}{\Gamma(\alpha+c-N+2n)},
$$
where $(\theta_n)$ is the sequence defined by (\ref{osigj}).
\end{lemma}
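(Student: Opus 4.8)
The plan is to follow verbatim the strategy of Lemma \ref{chxx} and Lemma \ref{lme1x}: I set $\xi_n=\langle \tilde \rho_{k,\alpha ,c,N},h_{n}^{\alpha ,c,N}\rangle$ and let $\zeta_n$ denote the claimed right-hand side, and I prove the identity by showing that $(\xi_n)_n$ and $(\zeta_n)_n$ satisfy one and the same three-term recurrence relation in $n$ and then matching the initial data. The measure $\tilde \rho_{k,\alpha ,c,N}$ in (\ref{thawb}) is a shifted Christoffel transform of the Hahn functional (exactly as $\tilde\rho_{k,a}$ was in the Charlier case), so the Dirac term appearing in the representation (\ref{thawb2}) is carried along automatically by the moment functional and needs no separate treatment.

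First I would derive the recurrence for $(\xi_n)_n$. Applying the three term recurrence (\ref{trha}) for $(h_n^{\alpha,c,N})_n$ under the pairing gives $\langle\tilde\rho,xh_n\rangle=\xi_{n+1}+b_n\xi_n+c_n\xi_{n-1}$, and I would then recompute $\langle\tilde\rho,xh_n\rangle$ using the shift by $k+1$ and the parameter-lowering identities of the Hahn weight (the analogues of $x\rho_a(x)=a\rho_a(x-1)$ exploited in Lemma \ref{chxx}). Comparing the two expressions yields a three term recurrence for $(\xi_n)_n$ whose middle coefficient is shifted relative to $b_n$, together with a separate equation fixing the initial index.

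Next I would check that $(\zeta_n)_n$ obeys the same recurrence. Substituting the explicit formula, dividing by the common constant $\Gamma(c-k-1)$, and writing the ratios of consecutive Gamma factors as polynomials in $n$, the recurrence becomes a three term relation among $h^*_{2,k}(\theta_{n+1})$, $h^*_{2,k}(\theta_n)$ and $h^*_{2,k}(\theta_{n-1})$. The decisive alignment is that $\theta_n=(n+1)(n+\alpha+c-N-1)$ equals the dual-Hahn lattice $x(x+2+N-\alpha-c)$ evaluated at $x=-n-1$; hence this relation is precisely the second order difference equation for the family $h^*_{2,k}=h_{k}^{*,-\alpha,2-c,-N}$ (the analogue for these parameters of equation (\ref{sodedh})) written at $x=-n-1$. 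What makes this step work, and what I expect to be the main obstacle, is the bookkeeping that turns the ratios $\Gamma(\alpha+c+n)\Gamma(\alpha+1+n-N)/\Gamma(\alpha+c-N+2n)$ into exactly the coefficient polynomials $r,s,u$ of that difference equation; one must track the substitution $(\alpha,c,N)\mapsto(-\alpha,2-c,-N)$ and the half-integer shifts encoded in $\sigma_n=2n+\alpha+c-N-2$ of (\ref{osigj2}) with care, since a single misaligned shift destroys the match.

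Finally I would verify the initial value $\xi_0=\zeta_0$ by induction on $k$, as in Lemma \ref{lme1x}. Writing $\eta_{k,c}=\xi_0=\langle\rho_{\alpha+k+1,c-k-1,N+k+1}(x+k+1),(x+1)\cdots(x+k)\rangle$ and peeling off one factor with the same weight identities produces a recursion of the form $\eta_{k,c}-k\eta_{k-1,c-1}=(\cdots)$; the companion quantity $\tau_{k,c}=\zeta_0$, which involves $h^*_{2,k}(\theta_0)=h^*_{2,k}(\alpha+c-N-1)$, is shown to satisfy the identical recursion by means of the first order difference equation for $h^*_{2,k}$ (the analogue of (\ref{fodedh})). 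Together with the separate initial-index equation, the induction then closes and establishes $\xi_0=\zeta_0$, completing the proof.
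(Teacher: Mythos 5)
Your proposal is correct and follows essentially the same route as the paper, which omits this proof precisely because it is ``similar to that of the previous Sections'': the three-term-recurrence comparison with the shifted middle coefficient, the identification of the $\zeta_n$-recurrence with the second order difference equation of the dual Hahn family $h^*_{2,k}=h_k^{*,-\alpha,2-c,-N}$ at $x=-n-1$ on the lattice $x(x+2+N-\alpha-c)$, and the induction on $k$ for $\xi_0=\zeta_0$ via the first order difference relation are exactly the steps of the proofs of the Charlier, Meixner~I and Hahn~I lemmas that the paper points to.
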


\begin{theorem} \label{lm61ha2}
For $k\ge 1$, let $\alpha, c, N$ be real numbers satisfying (\ref{conacN2})
and $h^*_{2,k}(n(n-2-N+a+c))\not =0$, $n\ge 0$, where $h_{2,k}^*$, $k\ge 1$, are the dual Hahn polynomials (\ref{dhp2}).
We then define the polynomials $P_2$ and $P_1$, of degrees $k$ and $k+1$, respectively, by
\begin{align}\label{defp1ha2}
P_2(x)&=h^*_{2,k}(x) ,\\\label{defp2ha2}
P_1(x)&=(\alpha +c-N)h^*_{2,k}(x)+2(x-\alpha-c+N+1)\\ \nonumber
&\hspace
{2cm} \times\sum _{j=0}^k\frac{(-k)_j(2-c+j)_{k-j}(N+1+j)_{k-j}}{(j+1)!}r_j(x).
\end{align}
We also define the sequences of numbers $(\lambda _n)_n$, $(\gamma _n)_{n\ge 1}$ and $(\beta _n)_{n\ge 1}$  by
\begin{align}\label{eigmek2}
 \lambda_n&=\frac{\sigma _n\gamma_n+P_1(\theta_{n-1})}{2}, \quad n\ge 1,\quad \lambda_0
 =\frac{P_1(\theta_0)-\sigma_1P_2(\theta_0)}{2},\\\label{defdelha2}
\gamma_n&=h^*_{2,k}(n(n-2-N+a+c)),\\
\label{defbetha2}
\beta_n&=\frac{n(n+\alpha -N)(n+\alpha+c-1)}{(2n+\alpha+c-N-1)(2n+\alpha+c-N-2)}\frac{\gamma_{n+1}}{\gamma_n},
\end{align}
and the sequence of polynomials $(q_n)_n$ by $q_0=1$, and
\begin{equation}\label{defqha2}
q_n(x)=h_{n}^{a,c,N}(x)+\beta _nh_{n-1}^{a,c,N}(x), \quad n\ge 1,
\end{equation}
where $(\theta_n)_n$ and $(\sigma_n)_n$ are defined by (\ref{osigj}) and (\ref{osigj2}).
If $N$ is not a positive integer, then the polynomials $(q_n)_n$ are orthogonal with respect
to the moment functional $\tilde \rho $ (\ref{thawb}). If $N$ is a positive integer, then the polynomials
$(q_n)_{0\le n\le N}$ are orthogonal with respect to the measure $\tilde \rho$ (\ref{thawb2}). Moreover, if we
consider the difference operator of order $2k+2$ and genre $(-k-1,k+1)$ defined by
$$
D=\frac{1}{2}P_1(D_{\alpha,c,N})-\left(\frac{\alpha+c-N}{2}+(x-\alpha)\nabla \right) P_2(D_{\alpha,c,N}),
$$
where $D_{\alpha ,c,N}$ is the second order difference operator for the Hahn polynomials (\ref{sodeha}), then
$D(q_n)=\lambda_nq_n$.
\end{theorem}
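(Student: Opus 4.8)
The plan is to prove the two assertions of the statement separately—the eigenvalue equation $D(q_n)=\lambda_nq_n$ and the orthogonality of $(q_n)_n$—reducing each to a result already established, exactly as was done in the Hahn I case (Theorem~\ref{lm61ha}). For the eigenvalue equation I would simply specialize Theorem~\ref{th5.1ha} to $i=2$. The key observation is that the polynomial $P_2=h^*_{2,k}$ selected in (\ref{defp1ha2}) is, by its expansion (\ref{dhp2}) in the basis $(r_j)_j$, precisely of the form $P_2=\sum_{j=0}^kw_jr_j$ with
\[
w_j=\frac{(-k)_j(2-c+j)_{k-j}(N+1+j)_{k-j}}{j!}.
\]
With this choice of the $w_j$, the polynomial $P_1$ produced by the formula of Theorem~\ref{th5.1ha} coincides term by term with (\ref{defp2ha2}), while the sequence $\beta_{n,2}$, the eigenvalues $\lambda_n$, and the operator $D_2$ of that theorem coincide with $\beta_n$, $\lambda_n$ and $D$ in the present statement. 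Hence $D(q_n)=\lambda_nq_n$ is nothing but the conclusion of Theorem~\ref{th5.1ha} (whose engine is the abstract Lemma~\ref{fl2v} for $\D$-operators of type 2), and no further computation is needed.

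For the orthogonality I would invoke Lemma~\ref{addel} with $\lambda=-k-1$, $M=0$, $\nu=\tilde\rho_{k,\alpha,c,N}$ and $p_n=h_n^{\alpha,c,N}$. The first step is to check that $\mu=(x+k+1)\nu$ is a nonzero scalar multiple of the Hahn functional $\rho_{\alpha,c,N}$, so that $(h_n^{\alpha,c,N})_n$ are genuinely orthogonal with respect to $\mu$. When $N$ is a positive integer this is immediate from (\ref{thawb2}): the factor $(x+k+1)$ annihilates the mass at $-k-1$ and cancels the denominator $(x+k+1)$ at the remaining points, leaving $\frac{\Gamma(N+k+1)}{\Gamma(N)}\rho_{\alpha,c,N}$; for $N$ not a positive integer the same identity is read off at the level of moment functionals from the definition (\ref{thawb}). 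The second step is to identify $\alpha_n=\langle\tilde\rho_{k,\alpha,c,N},h_n^{\alpha,c,N}\rangle$, which is supplied by the Lemma immediately preceding the statement (and rests on the second-order difference equation (\ref{sodedh}) for the dual Hahn polynomials $h^*_{2,k}$), and then to verify that the numbers $\beta_n$ produced by (\ref{hk2}) with $M=0$ agree with (\ref{defbetha2}). Since $\alpha_n$ is the product of $\gamma_{n+1}=h^*_{2,k}(\theta_n)$ with an explicit quotient of Gamma functions, the ratio $-\alpha_n/\alpha_{n-1}$ telescopes, after cancelling those Gamma factors, to
\[
\frac{n(n+\alpha-N)(n+\alpha+c-1)}{(2n+\alpha+c-N-1)(2n+\alpha+c-N-2)}\frac{\gamma_{n+1}}{\gamma_n},
\]
which is exactly $\beta_n$. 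The non-degeneracy hypothesis $h^*_{2,k}(n(n-2-N+\alpha+c))\neq0$ together with (\ref{conacN2}) guarantees $\gamma_n\neq0$, hence $\alpha_{n-1}\neq0$, so Lemma~\ref{addel} applies; and when $N$ is a positive integer the support of $\tilde\rho$ is the set of $N+1$ points $\{-k-1,0,1,\dots,N-1\}$, which is precisely why only $(q_n)_{0\le n\le N}$ are obtained.

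The two genuine computations are therefore the Gamma-function telescoping in $-\alpha_n/\alpha_{n-1}$ and the matching of the $r_j$-coefficients of $P_2=h^*_{2,k}$. Neither is a real obstacle, because the substantial analytic work—the closed form for $\langle\tilde\rho,h_n^{\alpha,c,N}\rangle$ and the abstract eigenvalue mechanism of Lemma~\ref{fl2v}—has already been carried out. I expect the only point demanding care to be the bookkeeping of the parameter constraints (\ref{conacN2}): they are exactly what keep every Gamma factor finite and nonzero and what ensure that $\mu\propto\rho_{\alpha,c,N}$ remains a bona fide orthogonality functional, so that the Christoffel step underlying Lemma~\ref{addel} does not degenerate.
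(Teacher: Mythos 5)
Your proposal is correct and follows exactly the route the paper intends (the paper omits this proof, stating it is analogous to the Charlier and Hahn~I cases): the differential--difference part is the specialization of Theorem~\ref{th5.1ha} with $i=2$ and $w_j=(-k)_j(2-c+j)_{k-j}(N+1+j)_{k-j}/j!$, and the orthogonality part is Lemma~\ref{addel} with $\lambda=-k-1$, $M=0$ and the value of $\alpha_n$ from the preceding Lemma, with the Gamma-quotient telescoping to $\beta_n$ as you describe.
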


Conjecture 2 in the Introduction for the Hahn weight $\rho _{\alpha ,c,N}$
and the finite set $F=\{ 1, 2, \cdots, k\}$ can be deduced as a Corollary of the previous Theorem:

\begin{corollary}\label{hahaha} For a positive integer $k$, write $F=\{ 1, 2, \cdots , k\}$ and $\pp(x)=\prod _{j\in F}(x-j).$
Let $\alpha , c, N$ be real numbers satisfying $\alpha+c-N+k+2, \alpha -N+1, \alpha+c, c\not =0,-1,\cdots$,
and that
$$
h_{2,k}^{*,\alpha-k-1,c+k+1,N-k-1}(n(n+\alpha+c-N+k-1))\not =0.
$$
Then, the orthogonal polynomials with respect to the moment functional $\pp\rho _{a,c,N}$
are common eigenfunctions of a $(2k+2)$-order difference operator of genre $(-k-1,k+1)$.
\end{corollary}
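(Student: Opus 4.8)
The plan is to deduce this Corollary from Theorem \ref{lm61ha2} in exactly the way Corollaries \ref{jodch} and \ref{lm623} were deduced from their respective Theorems: by identifying $\pp\rho_{\alpha,c,N}$, up to a translation, with one of the Hahn II moment functionals $\tilde\rho_{k,\alpha',c',N'}$ defined in (\ref{thawb}). The parameters $\alpha',c',N'$ are forced by the definition (\ref{thawb}): since $\tilde\rho_{k,\alpha',c',N'}=(x+1)\cdots (x+k)\rho_{\alpha'+k+1,c'-k-1,N'+k+1}(x+k+1)$, I would choose $\alpha'=\alpha-k-1$, $c'=c+k+1$, $N'=N-k-1$, so that the inner Hahn functional is exactly $\rho_{\alpha,c,N}$. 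The claim I would then establish is the moment-functional identity
\begin{equation*}
\pp\rho_{\alpha,c,N}=\tilde\rho_{k,\alpha-k-1,c+k+1,N-k-1}(x-k-1).
\end{equation*}

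This identity is the one computation of the proof, and it is routine. Using only the definitions of the Christoffel transform and of the translation of a moment functional recalled in the Preliminaries, I would unwind the right-hand side: applying $\langle\mu(x-k-1),p\rangle=\langle\mu,p(x+k+1)\rangle$ and then the definition of the Christoffel transform converts the factor $(x+1)\cdots(x+k)$ together with the inner shift $\rho_{\alpha,c,N}(x+k+1)$ into the factor $(x-1)(x-2)\cdots(x-k)=\pp(x)$ acting on $\rho_{\alpha,c,N}$. When $N$ is a positive integer one can cross-check this directly on the explicit weights: the factor $\pp(x)$ kills the masses at $x=1,\dots,k$, turns the mass at $x=0$ (where $\pp(0)=(-1)^kk!$) into the Dirac delta at the shifted endpoint $\delta_{-k-1}$ appearing in (\ref{thawb2}), and on $x\ge k+1$ reproduces the shifted Hahn II weight through the Gamma identities $\pp(x)=\Gamma(x)/\Gamma(x-k)$ and $\Gamma(x)/x!=1/x$. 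I would also record that, under $\alpha'=\alpha-k-1$, $c'=c+k+1$, $N'=N-k-1$, the admissibility conditions (\ref{conacN2}) become $\alpha+c-N+k+2,\alpha-N+1,\alpha+c,c\neq 0,-1,\dots$, and that the non-vanishing hypothesis of Theorem \ref{lm61ha2} becomes the stated one: since $\alpha'+c'-N'=\alpha+c-N+k+1$, the argument $n(n-2-N'+\alpha'+c')$ equals $n(n+\alpha+c-N+k-1)$, and the dual Hahn family $h^*_{2,k}$ built from the primed parameters is precisely $h_{2,k}^{*,\alpha-k-1,c+k+1,N-k-1}$. This bookkeeping, rather than any deep fact, is the only place where care is needed, so I would regard it as the main (mild) obstacle.

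With the identity in hand the conclusion is immediate. By the translation-invariance of orthogonality noted in the Preliminaries, the orthogonal polynomials for $\pp\rho_{\alpha,c,N}$ are the orthogonal polynomials $(q_n)_n$ produced by Theorem \ref{lm61ha2} for $\tilde\rho_{k,\alpha-k-1,c+k+1,N-k-1}$, each evaluated at $x-k-1$ (a finite family when $N$, hence the primed $N-k-1$, is a positive integer, exactly as in the Theorem). Theorem \ref{lm61ha2}, applied with the primed parameters, furnishes a difference operator $D$ of order $2k+2$ and genre $(-k-1,k+1)$ with $D(q_n)=\lambda_nq_n$. Conjugating $D$ by the translation $x\mapsto x-k-1$ replaces each coefficient $f_l(x)$ of $\Sh_l$ by $f_l(x-k-1)$ while leaving the shifts $\Sh_l$, $-k-1\le l\le k+1$, and in particular the non-vanishing of the extreme coefficients $f_{-k-1},f_{k+1}$, unchanged; hence the conjugated operator still has order $2k+2$ and genre $(-k-1,k+1)$ and has the shifted polynomials $q_n(x-k-1)$ as eigenfunctions with the same eigenvalues $\lambda_n$. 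This is precisely the assertion of the Corollary.
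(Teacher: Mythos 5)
Your proof is correct and follows exactly the route the paper intends (and uses verbatim for the analogous Corollaries in the Charlier, Meixner, Krawtchouk and Hahn I cases): identify $\pp\rho_{\alpha,c,N}$ with $\tilde\rho_{k,\alpha-k-1,c+k+1,N-k-1}$ shifted by $k+1$ and invoke Theorem \ref{lm61ha2}; your parameter bookkeeping for the conditions (\ref{conacN2}) and the non-vanishing hypothesis checks out. The paper omits this proof as "similar to the previous Sections", so your version simply supplies the details it leaves implicit.
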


\section{Appendix}

In this Appendix, we apply our method to the Laguerre and Jacobi polynomials to generate families of orthogonal polynomials
which are eigenfunctions of higher order differential operators.

\subsection{Laguerre polynomials}
For $\alpha\in \RR $, we use the standard definition of the Laguerre polynomials $(L_{n}^{\alpha})_n$
\begin{equation}\label{deflap}
L_{n}^{\alpha}(x)=\sum _{j=0}^n \frac{(-x)^{j}}{j!}\binom{n+\alpha}{n-j}
\end{equation}
(that and the next formulas can be found in \cite{EMOT}, pp. 188-192).

When $\alpha \not =-1,-2,\cdots $, they are  orthogonal with respect to a measure $\mu_\alpha=\mu_\alpha(x)dx$,
which it is positive only when $\alpha >-1$,
and then
\begin{equation}\label{Laguerrew}
\mu_\alpha (x) =x^\alpha e^{-x}, \quad 0<x.
\end{equation}
Laguerre polynomials are eigenfunctions of the following second order differential operator
\begin{equation}\label{sodolag}
D_{\alpha} =x\left(\frac{d}{dx}\right) ^2+(\alpha +1-x)\frac{d}{dx} ,\qquad D_{\alpha} (L_{n}^{\alpha})=-nL_{n}^{\alpha},\quad n\ge 0.
\end{equation}
We first identify a $\D$-operator for the Laguerre polynomials.

\begin{lemma}\label{lTlag}
For $\alpha \in \RR $, let $p_{n}=L_{n}^{\alpha }$, $n\ge 0$, the Laguerre polynomials given by (\ref{deflap}).
Then the operator $\D$ defined by (\ref{defTo}) from the sequence $\varepsilon _n=-1$, $n\ge 0$,
is a $\D$-operator for the Laguerre polynomials and the algebra $\A$  (\ref{algdiff1}).
More precisely  $\D=d/dx$.
\end{lemma}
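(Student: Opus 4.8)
The plan is to mirror the argument used for the Charlier polynomials in Lemma~\ref{lTch}: compute $\D$ explicitly on the basis $(L_n^\alpha)_n$ and recognize the outcome as the derivative. First I would unwind the definition of $\zeta$. Since $\varepsilon_n=-1$, we have $\zeta(L_n^\alpha)=-L_{n-1}^\alpha$, and iterating gives $\zeta^j(L_n^\alpha)=(-1)^jL_{n-j}^\alpha$ for $0\le j\le n$ (and $0$ for $j>n$). Substituting into (\ref{defTo}), the sign factors cancel, since $(-1)^{j+1}(-1)^j=-1$, so that
\begin{equation*}
\D(L_n^\alpha)=\sum_{j=1}^n(-1)^{j+1}\zeta^j(L_n^\alpha)=-\sum_{j=1}^nL_{n-j}^\alpha=-\sum_{i=0}^{n-1}L_i^\alpha.
\end{equation*}

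Next I would show that this finite sum equals $\frac{d}{dx}L_n^\alpha$. The two classical facts I would invoke are the derivative rule $\frac{d}{dx}L_n^\alpha=-L_{n-1}^{\alpha+1}$ and the contiguous relation $L_i^\alpha=L_i^{\alpha+1}-L_{i-1}^{\alpha+1}$ (with the convention $L_{-1}^{\alpha+1}=0$), both available from the standard formulas quoted after (\ref{deflap}). The contiguous relation makes $\sum_{i=0}^{n-1}L_i^\alpha$ telescope to $L_{n-1}^{\alpha+1}$, whence $-\sum_{i=0}^{n-1}L_i^\alpha=-L_{n-1}^{\alpha+1}=\frac{d}{dx}L_n^\alpha$. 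Thus $\D$ and $d/dx$ agree on every $L_n^\alpha$, and since $(L_n^\alpha)_n$ is a basis of $\PP$, they coincide as operators on the whole space: $\D=d/dx$.

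Finally, the operator $d/dx$ belongs to the algebra $\A$ of (\ref{algdiff1}): it is the single term $f_1(d/dx)^1$ with the constant coefficient $f_1=1$, whose degree $0$ meets the constraint $\deg(f_1)\le 1$. Hence $\D\in\A$, and $\D$ is indeed a $\D$-operator for the Laguerre polynomials. The only non-formal ingredient is the telescoping identity, so the main obstacle, such as it is, is merely selecting the right pair of contiguous and derivative relations so that the alternating sum collapses cleanly; everything else is routine bookkeeping with signs.
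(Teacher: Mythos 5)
Your proposal is correct and follows essentially the same route as the paper: compute $\D(L_n^\alpha)=-\sum_{i=0}^{n-1}L_i^\alpha$ and identify this with $-L_{n-1}^{\alpha+1}=(L_n^\alpha)'$. The only cosmetic difference is that you obtain the identity $\sum_{i=0}^{n-1}L_i^\alpha=L_{n-1}^{\alpha+1}$ by telescoping the contiguous relation $L_i^\alpha=L_i^{\alpha+1}-L_{i-1}^{\alpha+1}$, whereas the paper quotes the summation formula $L_n^{\alpha+1}=\sum_{j=0}^nL_{n-j}^\alpha$ directly; these are trivially equivalent.
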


\begin{proof}
Since $p_{n}=L_{n}^{\alpha }$, using the definition above of the operator $\D$ and the well-known formulas
$(L_{n}^{\alpha})'=-L_{n-1}^{\alpha +1}$, $L_{n}^{\alpha+1}=\sum_{j=0}^nL_{n-j}^{\alpha}$, we have
\begin{align*}
\D(L_{n}^{\alpha})&=\sum_{j=1}^\infty (-1)^{j+1}\zeta ^j(L_{n}^{\alpha})=\sum_{j=1}^n (-1)^{j+1}(-1)^jL_{n-j}^{\alpha}=\\
&=-\sum_{j=1}^n L_{n-j}^{\alpha}=-\sum_{j=0}^{n-1} L_{n-1-j}^{\alpha}=-L_{n-1}^{\alpha+1}=\frac{d(L_{n}^{\alpha})}{dx}.
\end{align*}
This gives $\D=d/dx$.

\end{proof}

The previous Lemma together with Lemma \ref{fl1v} gives rise to the corresponding families of polynomials satisfying higher order differential equations.

\begin{theorem}\label{th5.1lag} Let $P_1$ be a polynomial of degree $k+1$, $k\ge 1$,
and write $P_2(x)=P_1(x-1)-P_1(x)$ (so that $P_2$ is a polynomial of degree $k$). We assume that $P_2(-n)\not =0$, $n\ge 0$, and
define the sequences of numbers
\begin{align}\nonumber
\gamma_{n+1}&=P_2(-n),\quad n\ge 0,\\\nonumber
\lambda_n&=P_1(-n),\quad n\ge 0,\\\label{becc}
\beta_n&=-\frac{\gamma_{n+1}}{\gamma_n}\quad n\ge 1,
\end{align}
and the sequence of polynomials $q_0=1$, and for $n\ge 1$
\begin{equation}\label{qnnolag}
q_n=L_{n}^{\alpha}+\beta_nL_{n-1}^{\alpha},
\end{equation}
where $L_{n}^{\alpha}$ is the $n$-th Laguerre polynomial.
Write finally $D$ for the differential operator of order $2k+2$
$$
D=P_1(D_\alpha)+\frac{d}{dx} P_2(D_\alpha),
$$
where $D_\alpha$ is the second order differential operator  with respect to which
the Laguerre polynomials $(L_{n}^{\alpha})_n$ are eigenfunctions.

Then $D(q_n)=\lambda_nq_n$, $n\ge 0$.
\end{theorem}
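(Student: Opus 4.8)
The plan is to obtain the statement as an immediate application of Lemma~\ref{fl1v}, exactly as in the proof of Theorem~\ref{th5.1ch} for the Charlier case: once the main characters are identified, the conclusion is essentially automatic. Here the sequence of polynomials is $(p_n)_n=(L_{n}^{\alpha})_n$, the algebra is the algebra $\A$ of differential operators (\ref{algdiff1}), and the operator $D_p$ is the second order differential operator $D_\alpha$ of (\ref{sodolag}), whose eigenvalues are $\theta_n=-n$. By Lemma~\ref{lTlag}, the constant sequence $\varepsilon_n=-1$ defines a $\D$-operator of type~1 for $(L_{n}^{\alpha})_n$ and $\A$, with $\D=d/dx\in\A$.

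It then remains to check that the data of the Theorem match the hypotheses of Lemma~\ref{fl1v}. Since $\theta_n=-n$, the assumption $P_2(-n)\neq 0$ is precisely $P_2(\theta_n)\neq 0$, and the definitions give $\gamma_{n+1}=P_2(-n)=P_2(\theta_n)$ together with $\lambda_n=P_1(-n)=P_1(\theta_n)$, so that $\lambda_n=P_1(\theta_n)$ holds with the very polynomial $P_1$ of the statement. The one identity needing a short computation is $\lambda_n-\lambda_{n-1}=\gamma_n$: evaluating $P_2(x)=P_1(x-1)-P_1(x)$ at $x=\theta_{n-1}=-n+1$ gives $\gamma_n=P_2(-n+1)=P_1(-n)-P_1(-n+1)=\lambda_n-\lambda_{n-1}$. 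Finally $\beta_n=-\gamma_{n+1}/\gamma_n=\varepsilon_n\gamma_{n+1}/\gamma_n$, so every hypothesis is verified and Lemma~\ref{fl1v} yields $D(q_n)=\lambda_n q_n$ with $D=P_1(D_\alpha)+\D\, P_2(D_\alpha)=P_1(D_\alpha)+\frac{d}{dx}P_2(D_\alpha)$; moreover $D\in\A$ because $\D\in\A$.

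To justify the stated order $2k+2$, I would examine the top-order terms, as in Theorem~\ref{th5.1ch}. The operator $D_\alpha$ has order $2$ with leading term $x(d/dx)^2$, so an easy induction shows that $D_\alpha^{\,m}$ has leading term $x^{m}(d/dx)^{2m}$; hence $P_1(D_\alpha)$ has order $2(k+1)=2k+2$ with top coefficient $u_1 x^{k+1}$, where $u_1\neq 0$ is the leading coefficient of $P_1$, while $\frac{d}{dx}P_2(D_\alpha)$ has order only $2k+1$. Therefore the coefficient of $(d/dx)^{2k+2}$ in $D$ equals $u_1 x^{k+1}\neq 0$, so $D$ has order exactly $2k+2$. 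The main point of the argument is thus pure bookkeeping: the only genuine verification is the telescoping identity $\lambda_n-\lambda_{n-1}=\gamma_n$ forced by $P_2(x)=P_1(x-1)-P_1(x)$ and the linearity of $\theta_n$ in $n$, together with the check that the leading term of $P_1(D_\alpha)$ does not cancel.
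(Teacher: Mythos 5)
Your proposal is correct and follows essentially the same route as the paper: the paper states this theorem as a direct application of Lemma \ref{fl1v} together with the $\D$-operator of Lemma \ref{lTlag}, exactly mirroring the explicit proof given for the Charlier analogue (Theorem \ref{th5.1ch}), including the identification $\theta_n=-n$, the telescoping check $\lambda_n-\lambda_{n-1}=\gamma_n$ forced by $P_2(x)=P_1(x-1)-P_1(x)$, and the leading-term computation for the order. Nothing is missing.
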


The case $k=1$ in the previous Theorem is included in \cite{GrH3}, and for general $k$ is implicit in \cite{Plamen2}.

We next consider some important particular cases of the previous theorem.

For $\alpha \not =0,-1,-2, \cdots $ and $M\not =-\Gamma(n)/\Gamma(n+\alpha)$, consider
the measure $\tilde \rho_{\alpha,M}$ defined by
\begin{equation}\label{darlag}
\tilde \rho_{\alpha,M}=\alpha \Gamma^2(\alpha )M\delta _0+\mu _{\alpha-1}(x) dx, \quad 0<x,
\end{equation}
where $\mu_\alpha(x)$ is the orthogonalizing function for the Laguerre polynomials $(L_{n}^{\alpha})_n$.
When $\alpha>0$ and $M>0$, the measure $\tilde \rho _{\alpha, M}$ is the
Krall-Laguerre-Koornwinder measure
$$
\alpha \Gamma^2(\alpha )M\delta _0+x^{\alpha -1}e^{-x}, \quad 0<x.
$$
To simplify the notation we remove the dependence of $\alpha$ and $M$ and write $\tilde \rho =\tilde \rho_{\alpha,M}$.

Using Lemma \ref{addel}, we
 find explicitly a sequence of orthogonal polynomials with respect to $\tilde \rho $
in terms of $p_{n}=L_{n}^{\alpha }$, $n\ge 0$.

\begin{lemma}\label{l4.1}
Let $(\gamma _{n})_{n\ge 1}$ be the sequence of numbers defined by
\begin{equation}\label{defdellagnn}
\gamma_{n}=1+M\frac{\Gamma(n+\alpha)}{\Gamma(n)}.
\end{equation}
Write
\begin{equation}\label{defbetlagnn}
\beta_n=-\frac{\gamma_{n+1}}{\gamma_{n}},\quad n\ge 1.
\end{equation}
Then the polynomials defined by $q_0=1$ and
\begin{equation}\label{defqnlagnn}
q_{n}=L_{n}^{\alpha }+\beta _nL_{n-1}^{\alpha }, \quad n\ge 1,
\end{equation}
are orthogonal with respect to $\tilde \rho$ (\ref{darlag}).
\end{lemma}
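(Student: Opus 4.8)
The statement is a direct application of Lemma~\ref{addel}, and the plan is simply to match the data of that lemma to the present situation and then compute two explicit quantities. Take $\lambda=0$ and $\nu=\mu_{\alpha-1}(x)dx$, so that $\mu=(x-\lambda)\nu=x\,\mu_{\alpha-1}(x)dx=\mu_\alpha(x)dx$ is the Laguerre weight, with respect to which the Laguerre polynomials $p_n=L_{n}^{\alpha}$ are orthogonal. The mass appearing in Lemma~\ref{addel} is not $M$ itself but $\widetilde M=\alpha\Gamma^2(\alpha)M$, since the Dirac part of $\tilde\rho$ in (\ref{darlag}) is $\alpha\Gamma^2(\alpha)M\,\delta_0$. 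With this identification it remains only to evaluate $\alpha_n=\int L_{n}^{\alpha}\,d\nu$ and $L_{n}^{\alpha}(0)$ and to check that the resulting $\beta_n$ coincides with (\ref{defbetlagnn}).

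The key step, and the one I expect to be the main obstacle, is the evaluation of $\alpha_n=\int_0^\infty L_{n}^{\alpha}(x)x^{\alpha-1}e^{-x}\,dx$; I would show that $\alpha_n=\Gamma(\alpha)$ for all $n$. A clean route, parallel to the proofs of Lemmas~\ref{chxx} and~\ref{lme1x}, is to use the three-term recurrence relation for the Laguerre polynomials, $xL_{n}^{\alpha}=(2n+\alpha+1)L_{n}^{\alpha}-(n+1)L_{n+1}^{\alpha}-(n+\alpha)L_{n-1}^{\alpha}$. Integrating this against $\nu$, and using that $x\,d\nu=d\mu_\alpha$ together with the orthogonality $\int L_{n}^{\alpha}\,d\mu_\alpha=0$ for $n\ge1$, gives the second order recurrence $(n+1)\alpha_{n+1}=(2n+\alpha+1)\alpha_n-(n+\alpha)\alpha_{n-1}$ for $n\ge1$, which the constant sequence $\Gamma(\alpha)$ satisfies. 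Since the initial values $\alpha_0=\int_0^\infty x^{\alpha-1}e^{-x}dx=\Gamma(\alpha)$ and $\alpha_1=(1+\alpha)\Gamma(\alpha)-\Gamma(\alpha+1)=\Gamma(\alpha)$ are also equal to $\Gamma(\alpha)$, the recurrence forces $\alpha_n=\Gamma(\alpha)$ for every $n$. (Alternatively one may quote the classical integral $\int_0^\infty e^{-x}x^{s-1}L_{n}^{\alpha}(x)\,dx=\Gamma(s)(\alpha+1-s)_n/n!$ and specialize to $s=\alpha$.)

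Finally, from the definition (\ref{deflap}) only the $j=0$ term survives at $x=0$, so $L_{n}^{\alpha}(0)=\binom{n+\alpha}{n}=\Gamma(n+\alpha+1)/(n!\,\Gamma(\alpha+1))$. Using $\Gamma(\alpha+1)=\alpha\Gamma(\alpha)$ one finds $\widetilde M L_{n}^{\alpha}(0)=\Gamma(\alpha)M\,\Gamma(n+\alpha+1)/n!$, whence
\[
\alpha_n+\widetilde M L_{n}^{\alpha}(0)=\Gamma(\alpha)\left(1+M\frac{\Gamma(n+\alpha+1)}{\Gamma(n+1)}\right)=\Gamma(\alpha)\,\gamma_{n+1},
\]
with $\gamma_{n+1}$ as in (\ref{defdellagnn}). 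The common factor $\Gamma(\alpha)$ cancels in the quotient defining $\beta_n$ in Lemma~\ref{addel}, producing exactly $\beta_n=-\gamma_{n+1}/\gamma_n$ as in (\ref{defbetlagnn}). The hypothesis $M\neq-\Gamma(n)/\Gamma(n+\alpha)$ is precisely what guarantees $\gamma_n\neq0$, so every denominator is nonzero, and Lemma~\ref{addel} then yields the orthogonality of $(q_n)_n$ with respect to $\tilde\rho$.
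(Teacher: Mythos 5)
Your proposal is correct and follows essentially the same route as the paper: both apply Lemma~\ref{addel} with $\lambda=0$, $\nu=\mu_{\alpha-1}(x)dx$, $\mu=\mu_\alpha(x)dx$ and mass $\alpha\Gamma^2(\alpha)M$, reduce everything to showing $\alpha_n=\int L_n^{\alpha}d\nu=\Gamma(\alpha)$ and $L_n^{\alpha}(0)=\binom{n+\alpha}{n}$, and then read off $\beta_n=-\gamma_{n+1}/\gamma_n$ from (\ref{hk2}). The only (immaterial) difference is how $\alpha_n=\Gamma(\alpha)$ is obtained: the paper uses the connection formula $L_{n}^{\alpha+1}=\sum_{j=0}^nL_{n-j}^{\alpha}$ so that only the constant term survives integration against $\mu_{\alpha-1}$, while you derive it from the three-term recurrence with the initial values $\alpha_0=\alpha_1=\Gamma(\alpha)$ --- both computations are valid.
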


\begin{proof}
In the notation of Lemma \ref{addel}, we have $\lambda=0$, $\nu =\mu _{\alpha -1}(x)dx$ and $\mu=\mu_{\alpha}(x)dx$.
Hence the formula $L_{n}^{\alpha+1}=\sum_{j=0}^nL_{n-j}^{\alpha}$ gives $\alpha_{n}=\Gamma(\alpha)$. Since we also have $p_n(0)=\binom{n+\alpha}{n}$,
an straightforward computation gives (\ref{defdellagnn}) and (\ref{defbetlagnn}) from (\ref{hk2}).
\end{proof}

The formula (\ref{defqnlagnn}) is the particular case $k=1$ of formula (3.16) in \cite{Plamen2}.

\bigskip

When $\alpha$ is a positive integer, the sequence $(\gamma_n)_n$ (\ref{defdellagnn}) is a polynomial
in $n$, and then Theorem \ref{th5.1lag} gives the  $(2\alpha +2)$-th order differential equation
that the Krall-Laguerre-Koornwinder polynomials (\ref{defqnlagnn}) satisfy.

\begin{theorem} \label{kkk} Let $(q_{n})_n$ be the sequence of orthogonal polynomials (\ref{defqnlagnn}) with respect to the measure $\tilde \rho$ (\ref{darlag}).
Assume that $\alpha$ is a positive integer.
Define the sequence of numbers $\lambda _n$ by
\begin{equation}\label{eiglagk}
\lambda _n=n+\frac{M}{\alpha +1}n(n+1)\cdots (n+\alpha ) ,
\end{equation}
and the polynomials $P_1$ and $P_2$, of degrees $\alpha+1$ and $\alpha$, respectively, by
\begin{align*}
P_1(x)&=-x+\frac{M}{\alpha +1}(-x)(-x+1)\cdots (-x+\alpha),\\
P_2(x)&=1+M(-x+1)(-x+2)\cdots (-x+\alpha).
\end{align*}
Consider the differential operator of order $2\alpha +2$ defined by
$$
D=P_1(D_{\alpha })+\frac{d}{dx} P_2(D_{\alpha }),
$$
where $D_{\alpha }$ is the second order differential operator for the Laguerre polynomials defined by (\ref{sodolag}). Then
$$
D(q_{n})=\lambda_nq_{n}.
$$
\end{theorem}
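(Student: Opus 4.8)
The plan is to recognize Theorem \ref{kkk} as the special case of Theorem \ref{th5.1lag} obtained by feeding in the specific polynomials $P_1,P_2$ displayed here, and to check that the resulting $q_n$ are exactly the orthogonal polynomials produced in Lemma \ref{l4.1}. The decisive point—and the only place where the assumption that $\alpha$ is a positive integer is used—is that under this hypothesis the quantity $\gamma_n=1+M\Gamma(n+\alpha)/\Gamma(n)$ of Lemma \ref{l4.1} becomes a genuine polynomial in $n$, since $\Gamma(n+\alpha)/\Gamma(n)=n(n+1)\cdots(n+\alpha-1)$. Because the eigenvalues of the Laguerre polynomials with respect to $D_\alpha$ are $\theta_n=-n$ (see (\ref{sodolag})), this is precisely what allows $\gamma_n$ and $\lambda_n$ to be realized as polynomials evaluated at $\theta_n$, which is the standing requirement of Theorem \ref{th5.1lag}; for non-integer $\alpha$ no such polynomials exist and the order of the operator is not finite.

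First I would check the degrees and the defining relation between $P_1$ and $P_2$. The product $(-x)(-x+1)\cdots(-x+\alpha)$ has $\alpha+1$ factors and $(-x+1)\cdots(-x+\alpha)$ has $\alpha$ factors, so $\deg P_1=\alpha+1$ and $\deg P_2=\alpha$; thus $k=\alpha$ in the notation of Theorem \ref{th5.1lag}. The heart of the verification is the identity $P_2(x)=P_1(x-1)-P_1(x)$. Writing out $P_1(x-1)-P_1(x)$, the linear terms contribute $(-x+1)-(-x)=1$, while the two products share the common factor $(-x+1)(-x+2)\cdots(-x+\alpha)$ and differ only in the remaining factor, giving
\begin{equation*}
\frac{M}{\alpha+1}(-x+1)\cdots(-x+\alpha)\big[(-x+\alpha+1)-(-x)\big]=M(-x+1)\cdots(-x+\alpha).
\end{equation*}
Adding these yields exactly $P_2(x)$, as required.

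Next I would match the sequences by evaluating at the Laguerre eigenvalues. One gets $P_1(-n)=n+\frac{M}{\alpha+1}n(n+1)\cdots(n+\alpha)=\lambda_n$ of (\ref{eiglagk}), and $P_2(1-n)=1+M\,n(n+1)\cdots(n+\alpha-1)=1+M\Gamma(n+\alpha)/\Gamma(n)$, which is $\gamma_n$ of (\ref{defdellagnn}); equivalently $\gamma_{n+1}=P_2(-n)$. Hence the sequence $\beta_n=-\gamma_{n+1}/\gamma_n$ coincides with both (\ref{defbetlagnn}) and (\ref{becc}), so the polynomials $q_n=L_n^\alpha+\beta_nL_{n-1}^\alpha$ defined here are identical to the orthogonal polynomials of Lemma \ref{l4.1}. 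I would also note that the nonvanishing hypothesis $P_2(-n)\neq0$ of Theorem \ref{th5.1lag} is exactly the condition $\gamma_{n+1}\neq0$ guaranteed by the assumption $M\neq-\Gamma(n)/\Gamma(n+\alpha)$ built into the measure $\tilde\rho_{\alpha,M}$ (\ref{darlag}).

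With all hypotheses of Theorem \ref{th5.1lag} in place for $P_1,P_2$, and using the $\D$-operator $\D=d/dx$ for the Laguerre polynomials from Lemma \ref{lTlag}, Theorem \ref{th5.1lag} immediately yields $D(q_n)=\lambda_nq_n$ with $D=P_1(D_\alpha)+\frac{d}{dx}P_2(D_\alpha)$ of order $2k+2=2\alpha+2$. The only real computation is the telescoping identity $P_2=P_1(\cdot-1)-P_1$ above; everything else is direct substitution, so I do not expect a genuine obstacle once the polynomiality of $\gamma_n$ forced by $\alpha\in\NN$ has been observed.
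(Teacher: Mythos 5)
Your proposal is correct and follows exactly the paper's own route: the paper's entire proof consists of checking the telescoping identity $P_2(x)=P_1(x-1)-P_1(x)$ and then invoking Theorem \ref{th5.1lag}, which is precisely your central computation. Your additional verifications (that $P_1(-n)=\lambda_n$, that $P_2(-n)=\gamma_{n+1}$ matches Lemma \ref{l4.1}, and that the nonvanishing hypothesis is guaranteed by the condition on $M$) are details the paper leaves implicit, and they are all accurate.
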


\begin{proof}

It is enough to check that $P_2(x)=P_1(x-1)-P_1(x)$ and then use Theorem \ref{th5.1lag}.

\end{proof}

Except for a sign, the eigenvalues $(\lambda_n)_n$ (\ref{eiglagk}) coincide with that given by J. and R. Koekoek in \cite{koekoe}. Hence,
we can conclude that the $(2\alpha+2)$-th order differential operator $-D$, where $D$ is displayed in the previous theorem, coincides
with the differential operator found by J. and R. Koekoek for the Krall-Laguerre-Koornwinder polynomials. This provides a nice expression for Koekoek
differential operator as a polynomial combination of the Laguerre second order differential operator.

\bigskip
When $\alpha$ is not equal to $k$, the polynomials $(q_n)_n$ (\ref{qnnolag}) in Theorem \ref{th5.1lag} seem not to be orthogonal
with respect to a measure. Anyway they enjoy certain orthogonality property. Indeed, when $P_2(1)\not =0$, since $P_2$ is a polynomial of degree $k$,
we can choose numbers $w_j$, $j=1,\cdots ,k$, such that
\begin{equation}\label{pqdlc2}
P_2(-x)=P_2(1)\left( 1+\sum_{j=1}^{k}w_j\binom{x+j}{j}\right).
\end{equation}
On the other hand, if $P_2(1) =0$, we can choose numbers $w_j$, $j=0,\cdots ,k$, such that
\begin{equation}\label{pqdlc3}
P_2(-x)= 1+\sum_{j=0}^{k}w_j\binom{x+j}{j}
\end{equation}
(in particular $w_0=-1$).

We then define the polynomial $Q$ by
\begin{equation}\label{pqdlc}
Q(x)=\begin{cases} \sum_{j=1}^k (\alpha-j)_jw_jx^{k-j}, &\mbox{if $P_2(1)\not =0$},\\
 \sum_{j=0}^k(\alpha-j)_jw_jx^{k-j}, &\mbox{if $P_2(1)=0$}.
 \end{cases}
\end{equation}
Notice that the polynomial $Q$ has degree at most $k-1$ for $P_2(1)\not =0$, but $k$ for $P_2(1)=0$.

\begin{lemma} \label{occ}
Let $P_2$ be an arbitrary polynomial of degree $k$ satisfying that $P_2(-n)\not =0$, $n\ge 0$.
For $\alpha \in \RR \setminus \{ k, k-1, \cdots \}$, consider the polynomials $(q_n)_n$ defined by (\ref{qnnolag}).
If we write
\begin{equation}\label{innlag}
\langle f,g\rangle=\int_0 ^\infty f(x)g(x)\mu _{\alpha -1}(x)dx+g(0)\int_0 ^\infty f(x)Q(x)\mu _{\alpha -k-1}(x)dx,
\end{equation}
where $\mu_\alpha$ is the orthogonalizing measure for the Laguerre polynomials $(L_{n}^{\alpha})_n$ and $Q$ is the polynomial defined by (\ref{pqdlc}),
then
\begin{enumerate}
\item $\langle q_n,q_j\rangle=0$, $j=0,\cdots ,n-1$,
\item $\langle q_n,q_n\rangle\not =0$, and
\item $\langle x^{k+1}q_n,q_j\rangle=0$, $n\ge k+2$ and $j=0,\cdots ,n-k-2$.
\end{enumerate}
\end{lemma}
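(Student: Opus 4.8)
My plan is to prove the three orthogonality-type relations directly from the structure of the bilinear form \eqref{innlag} and the known properties of the Laguerre polynomials $(q_n)_n$, without trying to exhibit a measure. The bilinear form splits as $\langle f,g\rangle = B_1(f,g) + g(0) L(f)$, where $B_1(f,g)=\int_0^\infty f g\,\mu_{\alpha-1}\,dx$ is the ordinary Laguerre inner product (with parameter $\alpha-1$) and $L(f)=\int_0^\infty f(x)Q(x)\mu_{\alpha-k-1}(x)\,dx$ is a linear functional. The key point is that this form is \emph{not} symmetric, so the three claims must be handled on their own terms rather than invoked as consequences of a single positive-definite structure. First I would record that for the ordinary Laguerre measure $\mu_{\alpha-1}$, the polynomials $q_n=L_n^\alpha+\beta_n L_{n-1}^\alpha$ satisfy $B_1(q_n, p)=0$ for every polynomial $p$ of degree $<n-1$, since both $L_n^\alpha$ and $L_{n-1}^\alpha$ are orthogonal to such $p$ with respect to $\mu_{\alpha-1}$ after one notices $B_1(L_m^\alpha, \cdot)$ annihilates degrees below $m$ only up to the shift in parameter; the correct bookkeeping via the contiguous relation $L_m^\alpha = L_m^{\alpha-1}+L_{m-1}^{\alpha-1}$ is what I would set up carefully here.

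**Reducing the functional $L$.**
The main technical ingredient is to evaluate $L(q_n)$ and, more generally, $L(x^i q_n)$ for small $i$. Here the choice of $Q$ in \eqref{pqdlc} is engineered precisely so that $L$ acts on $q_n$ in a controlled way. I would use the moment relations for the Laguerre weight, namely $\int_0^\infty x^s \mu_{\alpha-k-1}\,dx = \Gamma(\alpha-k+s)$, together with the binomial expansion \eqref{pqdlc2} or \eqref{pqdlc3} of $P_2(-x)$, to show that the combination defining $Q$ makes $L(L_m^\alpha)$ proportional to $\gamma_{m+1}=P_2(-m)$ (up to a fixed constant and a shift). Concretely, I expect an identity of the shape $L(L_m^\alpha)= c\cdot P_2(-m)$ for a constant $c$ independent of $m$; this is where the Pochhammer factors $(\alpha-j)_j$ in \eqref{pqdlc} earn their keep, since they convert the moments of $\mu_{\alpha-k-1}$ against $Q$ into exactly the falling-factorial coefficients appearing in the binomial expansion of $P_2$. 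Once this is established, $L(q_n)= c\,(P_2(-n)+\beta_n P_2(-n+1)) = c(\gamma_{n+1}+\beta_n\gamma_n)$, which by \eqref{becc} equals $c(\gamma_{n+1}-\gamma_{n+1})=0$.

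**Assembling the three claims.**
For claim (1), $\langle q_n,q_j\rangle = B_1(q_n,q_j)+q_j(0)L(q_n)$ for $j\le n-1$; the first term vanishes by the Laguerre orthogonality discussion above once $j\le n-2$, and the boundary case $j=n-1$ is exactly where the definition of $\beta_n$ was designed to cancel the leftover $B_1(q_n,q_{n-1})$ term against the remaining piece, while $L(q_n)=0$ kills the delta-type contribution for all $j\le n-1$. For claim (2), $\langle q_n,q_n\rangle = B_1(q_n,q_n)+q_n(0)L(q_n)=B_1(q_n,q_n)\ne 0$, since $L(q_n)=0$ and $B_1(q_n,q_n)$ is a nonzero value of a genuine (possibly signed) Laguerre norm. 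For claim (3) I would use that $\langle x^{k+1}q_n,q_j\rangle = B_1(x^{k+1}q_n,q_j)+q_j(0)L(x^{k+1}q_n)$; the functional term vanishes because $L(x^{k+1}q_n)$ again reduces, via the same moment computation shifted by $k+1$, to a telescoping combination that is annihilated by the defining relation for $\beta_n$, while $B_1(x^{k+1}q_n,q_j)=B_1(q_n, x^{k+1}q_j)$ vanishes whenever $\deg(x^{k+1}q_j)=j+k+1<n$, i.e. $j\le n-k-2$.

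**Main obstacle.**
The delicate step—and the one I would spend the most care on—is the moment identity $L(L_m^\alpha)\propto P_2(-m)$ and its shifted analogue $L(x^{k+1}L_m^\alpha)$. These require matching the Pochhammer coefficients in \eqref{pqdlc} against $\int_0^\infty x^{k-j+s}\mu_{\alpha-k-1}\,dx$ and recognizing the resulting sum as the binomial expansion of $P_2$; the split into the cases $P_2(1)\ne 0$ and $P_2(1)=0$ in \eqref{pqdlc2}--\eqref{pqdlc3} is precisely to keep $\mu_{\alpha-k-1}$ integrable and the coefficient $w_0$ normalized. Everything else is the same telescoping cancellation already exploited in the proof of Lemma \ref{fl1v}, so once the functional $L$ is pinned down the three relations follow by the bookkeeping sketched above.
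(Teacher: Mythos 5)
Your overall strategy (split the form as $\langle f,g\rangle=B_1(f,g)+g(0)L(f)$ with $B_1(f,g)=\int_0^\infty fg\,\mu_{\alpha-1}\,dx$ and $L(f)=\int_0^\infty fQ\,\mu_{\alpha-k-1}\,dx$) is workable, but the two intermediate identities on which you hang everything are both false, and they do not fail in a way that your bookkeeping repairs. First, the contiguous relation you invoke is wrong: the correct identity is $L_m^{\alpha}=L_m^{\alpha-1}+L_{m-1}^{\alpha}$, equivalently $L_m^{\alpha}=\sum_{i=0}^{m}L_i^{\alpha-1}$, not $L_m^{\alpha}=L_m^{\alpha-1}+L_{m-1}^{\alpha-1}$. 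Consequently $B_1(q_n,p)$ does \emph{not} vanish for all $p$ of degree at most $n-2$; already $B_1(q_n,1)=\Gamma(\alpha)(1+\beta_n)\neq 0$. What is true is that $B_1(q_n,x^j)=0$ for $1\le j\le n-1$, because $x^j\mu_{\alpha-1}=x^{j-1}\mu_{\alpha}$ reduces the pairing to ordinary $\mu_\alpha$-orthogonality of $q_n=L_n^\alpha+\beta_nL_{n-1}^\alpha$ against a polynomial of degree at most $n-2$; the constant term is precisely the exceptional case. Second, the identity $L(L_m^\alpha)=c\,P_2(-m)$ is not correct: carrying out the moment computation with $Q$ as in (\ref{pqdlc}), using $x^{k-j}\mu_{\alpha-k-1}=\mu_{\alpha-j-1}$, $(\alpha-j)_j\Gamma(\alpha-j)=\Gamma(\alpha)$ and $\int L_m^\alpha\mu_{\alpha-j-1}=\Gamma(\alpha-j)\binom{m+j}{j}$, one gets (when $P_2(1)\neq0$) $L(L_m^\alpha)=\Gamma(\alpha)\bigl(P_2(-m)/P_2(1)-1\bigr)$, which is affine, not linear, in $P_2(-m)$. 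Hence $L(q_n)=-\Gamma(\alpha)(1+\beta_n)\neq 0$ in general. The true mechanism for item (1) whenever $q_j(0)\neq0$ is that the two nonzero quantities $q_j(0)B_1(q_n,1)$ and $q_j(0)L(q_n)$ cancel each other; verifying this cancellation is exactly the computation $\langle q_n,1\rangle=0$ that forces $\beta_n=-P_2(-n)/P_2(-(n-1))$, and it is the one step that actually uses the definitions of $Q$ and $\beta_n$. Your write-up asserts that each of these terms is separately zero, which is false, and thereby skips that step. (Your sentence about the "boundary case $j=n-1$" is also internally inconsistent: if $L(q_n)$ were $0$ there would be nothing left to cancel there either.)

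The paper's proof avoids these pitfalls by testing against the monomials $x^j$ rather than against the $q_j$: for $j\ge1$ the Dirac-type term drops out because $x^j$ vanishes at $0$, and the integral term vanishes because of the shift $x^j\mu_{\alpha-1}=x^{j-1}\mu_\alpha$; the same shift $x^{k+1}\mu_{\alpha-k-1}=\mu_\alpha$ disposes of item (3) and of $L(x^{k+1}q_n)$ (your treatment of item (3) is essentially fine, since $x^{k+1}q_j$ has no constant term). Only the single case $j=0$ requires the moment identity and the cancellation described above, together with $\langle 1,1\rangle=\Gamma(\alpha)P_2(0)/P_2(1)\neq0$ and $\langle q_n,x^n\rangle\neq0$ for item (2). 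If you replace "each piece vanishes" by "the two pieces sum to $q_j(0)\langle q_n,1\rangle=0$" and redo the $B_1$ and $L$ computations with the correct contiguous relation, you recover essentially the paper's argument.
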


\begin{proof}
It is enough to prove that $\langle q_n,x^j\rangle=0$, $j=0,\cdots ,n-1$,
$\langle q_n,x^n\rangle\not =0$, and $\langle x^{k+1}q_n,x^j\rangle=0$, $n\ge k+2$ and $j=0,\cdots ,n-k-2$.

Since $q_n$ is a linear combination of $L_{n}^{\alpha}$ and $L_{n-1}^{\alpha}$, we automatically have that
\begin{align*}
\langle q_n,x^j\rangle&=0,\quad j=1,\cdots ,n-1,\\
\langle x^{k+1}q_n,x^j\rangle&=0, \quad \mbox{$n\ge k+2$ and $j=0,\cdots ,n-k-2$},
\end{align*}
whatever the polynomial $Q$ and the numbers $\beta_n$, $n\ge 0$, are. In the same way, we have for $n\ge 1$
$$
\langle q_n,x^n\rangle =\int q_n(x)x^{n-1}\mu _{\alpha }(x)dx=(-1)^{n-1}(n-1)!\beta_n\Vert L_{n-1}^{\alpha}\Vert_2\not=0.
$$

For $j=0$, we proceed as follows. Using the expansion $L_{n}^{\alpha}=\sum_{j=0}^n\frac{(\alpha-\beta)_j}{j!}L_{n-j}^{\beta}$
(see \cite{EMOT}, p. 192, (39))  we get for $l\ge 1$
$$
\int L_{n}^{\alpha}\mu_{\alpha -l}(x)dx=\Gamma(\alpha-l+1)\binom{n+l-1}{l-1}.
$$
We first prove the Lemma when $P_2(1)\not =0$.
Using the definition of $Q$ (\ref{pqdlc}) and taking into account that $x^j\mu_\alpha=\mu_{\alpha+j}$, we get
$$
\langle q_n,1\rangle =\Gamma (\alpha)\left((1+\beta_n)
+\sum_{j=1}^{k}w_j\left[\binom{n+j}{j}+\beta_n\binom{n+j-1}{j}\right]\right).
$$

Equating $\langle q_n,1\rangle=0$, we get (using (\ref{pqdlc2}))
$$
\beta_n=-\frac{1+\sum_{j=1}^{k}w_j\binom{n+j}{j}}
{1+\sum_{j=1}^{k}w_j\binom{n+j-1}{j}}=-\frac{P_2(-n)}
{P_2(-(n-1))},
$$
which it is precisely the expression for $\beta_n$ in Theorem \ref{th5.1lag}.

Finally,
$$
\langle 1,1\rangle=\Gamma (\alpha)\left(1+\sum_{j=1}^{k}w_j\right)
=\Gamma (\alpha)P_2(0)/P_2(1)\not =0.
$$
When $P_2(1)=0$, the proof is similar.
\end{proof}

The orthogonality properties above for the polynomials $(q_n)_n$ imply that they have to satisfy
a $(2k+3)$-term recurrence relation.

\begin{corollary}
Assume that $\alpha \not =k, k-1, \cdots $, then the polynomials $(q_n)_n$ defined in Theorem \ref{th5.1lag} satisfy a $(2k+3)$-term
recurrence relation of the form
$$
x^{k+1}q_n(x)=\sum_{j=-k-1}^{k+1}c_{n,j}q_{n+j}(x).
$$
\end{corollary}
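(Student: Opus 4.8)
The plan is to expand $x^{k+1}q_n$ in the basis $(q_m)_m$ and to show, using only the orthogonality properties collected in Lemma \ref{occ}, that of the coefficients just the $2k+3$ with $n-k-1\le m\le n+k+1$ can survive. Since $x^{k+1}q_n$ has degree $n+k+1$, I can write
\begin{equation*}
x^{k+1}q_n(x)=\sum_{m=0}^{n+k+1}c_{n,m}q_m(x),
\end{equation*}
and the upper cutoff $m\le n+k+1$ is immediate from the degree count. The substance is therefore to prove that $c_{n,m}=0$ whenever $m\le n-k-2$.

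First I would record that the bilinear form $\langle\cdot,\cdot\rangle$ of (\ref{innlag}) is \emph{not} symmetric, because of the factor $g(0)$, so I must track which argument sits in which slot. The crucial structural fact is that property (1) of Lemma \ref{occ}, applied with $n$ replaced by $m$, says $\langle q_m,q_j\rangle=0$ for all $j<m$; equivalently, the matrix $(\langle q_m,q_j\rangle)_{m,j}$ vanishes whenever $m>j$ and has nonzero diagonal entries $\langle q_m,q_m\rangle$ by property (2). Pairing the expansion above with $q_j$ in the second slot and using this triangularity to truncate the sum gives, for each relevant $j$,
\begin{equation*}
\langle x^{k+1}q_n,q_j\rangle=\sum_{m=0}^{j}c_{n,m}\langle q_m,q_j\rangle.
\end{equation*}

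Next I would run a triangular induction on $j$, assuming $n\ge k+2$. For $j=0$, property (3) gives $\langle x^{k+1}q_n,q_0\rangle=c_{n,0}\langle q_0,q_0\rangle=0$, whence $c_{n,0}=0$ since $\langle q_0,q_0\rangle\ne0$. Inductively, if $c_{n,0}=\cdots=c_{n,j-1}=0$ for some $j\le n-k-2$, then the displayed identity collapses to $\langle x^{k+1}q_n,q_j\rangle=c_{n,j}\langle q_j,q_j\rangle$, which vanishes by property (3) (valid precisely because $j\le n-k-2$), and dividing by $\langle q_j,q_j\rangle\ne0$ yields $c_{n,j}=0$. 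This forces $c_{n,m}=0$ for all $m\le n-k-2$, i.e. for $m<n-k-1$. Re-indexing by $j=m-n$ then produces exactly $x^{k+1}q_n=\sum_{j=-k-1}^{k+1}c_{n,j}q_{n+j}$.

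Finally I would dispose of the small cases $n\le k+1$, where property (3) does not apply: there $n-k-1\le 0$, so the lower limit $m\ge n-k-1$ is automatically met by every $m\ge 0$ occurring in the expansion, and the reindexed sum still runs only over $-k-1\le j\le k+1$. The main obstacle is conceptual rather than computational: one has to recognize that the asymmetry of $\langle\cdot,\cdot\rangle$ is harmless provided the pairing used in property (3) places $q_j$ in the same (second) slot in which property (1) delivers the vanishing, so that the Gram matrix is genuinely triangular with nonzero diagonal and the resulting linear system can be solved from the bottom up. No estimates beyond Lemma \ref{occ} are required.
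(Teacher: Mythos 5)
Your proof is correct and follows essentially the same route as the paper: expand $x^{k+1}q_n$ in the basis $(q_m)_m$, use part (1) of Lemma \ref{occ} to make the Gram matrix triangular with nonzero diagonal (part (2)), and then kill the coefficients $c_{n,m}$ for $m\le n-k-2$ bottom-up via part (3). Your explicit attention to the asymmetry of the bilinear form and to the trivial cases $n\le k+1$ is a slight refinement of the paper's argument, not a different method.
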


\begin{proof}
Write $x^{k+1}q_n=\sum_{j=-n}^{k+1}c_{n,j}q_{n+j}$. Then, part (3) of the previous Lemma gives
$$
0=\langle x^{k+1}q_n,q_0\rangle =c_{n,-n}\langle q_0,q_0\rangle.
$$
Using part (2) of the previous Lemma we get $c_{n,-n}=0$. We now proceed by induction on $m$. Assume $c_{n,-n+j}=0$, $j=0,\cdots, m$, $m<n-k-2$.
Proceeding as before, we find
$$
0=\langle x^{k+1}q_n,q_{-n+m+1}\rangle =c_{n,-n+m+1}\langle q_{-n+m+1},q_{-n+m+1}\rangle,
$$
from where we deduce that also $c_{n,-n+m+1}=0$.

\end{proof}

\subsection{Jacobi polynomials}

For $\alpha,\beta \in \RR , \alpha+\beta \not=-1,-2,\cdots$, we use the standard definition of the Jacobi polynomials $(P_{n}^{\alpha,\beta})_n$
\begin{equation}\label{defjac}
P_{n}^{\alpha,\beta}(x)=2^{-n}\sum _{j=0}^n \binom{n+\alpha}{j}\binom{n+\beta}{n-j}(x-1)^{n-j}(x+1)^{j}
\end{equation}
(that and the next formulas can be found in \cite{EMOT}, pp. 169-173).

For $\alpha,\beta, \alpha+\beta \not =-1,-2,\cdots$,
they are orthogonal with respect to a measure $\mu _{\alpha,\beta}=\mu _{\alpha,\beta}(x)dx$, which it is positive only when
$\alpha ,\beta >-1$, and then
\begin{equation}\label{jacw}
\mu_{\alpha,\beta}(x) =(1-x)^\alpha (1+x)^{\beta}, \quad -1<x<1.
\end{equation}
They are eigenfunctions of the following second order differential operator
\begin{align}\label{sodojac}
D_{\alpha,\beta} &=(1-x^2)\left(\frac{d}{dx}\right) ^2+(\beta-\alpha-(\alpha+\beta+2)x)\frac{d}{dx},\\ \nonumber
D_{\alpha,\beta} &(P_{n}^{\alpha,\beta})=-n(n+\alpha+\beta+1)P_{n}^{\alpha,\beta},\quad n\ge 0.
\end{align}

We omit the proofs in this Section, since they are essentially the same as in the previous Sections.

We first construct two $\D$-operators for the Jacobi polynomials.

\begin{lemma}\label{lTjac}
For $\alpha, \beta \in \RR$ satisfying $\alpha+\beta \not =-1, -2, \cdots $, let $p_{n}=P_{n}^{\alpha ,\beta}$, $n\ge 0$, the Jacobi polynomials given by (\ref{defjac}). Then the operators $\D_1$ and $\D _2$ defined by (\ref{defTo2}) from the sequences $(n\ge 0)$
\begin{align*}
\varepsilon _{n,1}&=\frac{n+\alpha}{n+\alpha+\beta},\quad &\sigma_{n,1}&=2n+\alpha +\beta-1 ,\\
\varepsilon _{n,2}&=-\frac{n+\beta}{n+\alpha+\beta},\quad &\sigma_{n,2}&=-(2n+\alpha +\beta-1) ,
\end{align*}
are $\D$-operator for the Jacobi polynomials and the algebra $\A$  (\ref{algdiff1}).
More precisely
\begin{align*}
\D_1&=\displaystyle -\frac{\alpha +\beta+1}{2}I+(1-x)\frac{d}{dx},\\
\D_2&=\displaystyle \frac{\alpha +\beta+1}{2}I+(1+x)\frac{d}{dx}.
\end{align*}
\end{lemma}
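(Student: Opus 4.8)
The plan is to prove that the abstractly defined operators $\D_1,\D_2$ of (\ref{defTo2}) act on the Jacobi basis $(P_n^{\alpha,\beta})_n$ exactly as the two first order differential operators in the statement; since an operator $f_0I+f_1\frac{d}{dx}$ with $\deg f_l\le l$ lies in $\A$ (\ref{algdiff1}) (here $f_0$ is constant and $f_1=1\mp x$ is linear), this yields $\D_i\in\A$. I treat $\D_1$ first, writing $T_1=-\frac{\alpha+\beta+1}{2}I+(1-x)\frac{d}{dx}$, and reduce $\D_2$ to it by symmetry at the end. First I would evaluate the right hand side of (\ref{defTo2}) for $p_n=P_n^{\alpha,\beta}$: since $\zeta^j(P_n^{\alpha,\beta})=\big(\prod_{i=0}^{j-1}\varepsilon_{n-i,1}\big)P_{n-j}^{\alpha,\beta}$, the product of the $\varepsilon_{n,1}$'s telescopes to the Pochhammer ratio
\[
\varepsilon_n^{(j)}=\frac{(n+\alpha)(n-1+\alpha)\cdots(n-j+1+\alpha)}{(n+\alpha+\beta)(n-1+\alpha+\beta)\cdots(n-j+1+\alpha+\beta)},
\]
while $\sigma_{n+1,1}=2n+\alpha+\beta+1$ and $\sigma_{n+1-j,1}=2(n-j)+\alpha+\beta+1$.

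With these substitutions, $\D_1(P_n^{\alpha,\beta})=-\frac{2n+\alpha+\beta+1}{2}P_n^{\alpha,\beta}+\sum_{j=1}^n(-1)^{j+1}\big(2(n-j)+\alpha+\beta+1\big)\varepsilon_n^{(j)}P_{n-j}^{\alpha,\beta}$. Comparing with $T_1(P_n^{\alpha,\beta})=-\frac{\alpha+\beta+1}{2}P_n^{\alpha,\beta}+(1-x)\frac{d}{dx}P_n^{\alpha,\beta}$ and cancelling the identity part, the equality $\D_1=T_1$ is equivalent to the single structure relation
\[
(1-x)\frac{d}{dx}P_n^{\alpha,\beta}=-n\,P_n^{\alpha,\beta}+\sum_{j=1}^n(-1)^{j+1}\big(2(n-j)+\alpha+\beta+1\big)\varepsilon_n^{(j)}P_{n-j}^{\alpha,\beta}.
\]
The diagonal coefficient $-n$ is immediate from matching leading coefficients, since $(1-x)\frac{d}{dx}$ lowers the top term with factor $-n$; the whole content therefore sits in the off-diagonal coefficients. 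I would establish these by combining the derivative formula $\frac{d}{dx}P_n^{\alpha,\beta}=\frac{n+\alpha+\beta+1}{2}P_{n-1}^{\alpha+1,\beta+1}$ (see \cite{EMOT}) with the standard contiguous relation $\frac{1-x}{2}P_m^{\alpha+1,\gamma}=\frac{m+\alpha+1}{2m+\alpha+\gamma+2}P_m^{\alpha,\gamma}-\frac{m+1}{2m+\alpha+\gamma+2}P_{m+1}^{\alpha,\gamma}$ (with $\gamma=\beta+1$), which turns the left side into $\frac{n+\alpha+\beta+1}{2n+\alpha+\beta+1}\big[(n+\alpha)P_{n-1}^{\alpha,\beta+1}-nP_n^{\alpha,\beta+1}\big]$, and then the one–parameter connection formula that re-expands $P_m^{\alpha,\beta+1}$ in the basis $\{P_l^{\alpha,\beta}\}_{l\le m}$.

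A cleaner self-contained alternative uses the substitution $t=(1-x)/2$, under which $(1-x)\frac{d}{dx}$ becomes the Euler operator $-t\frac{d}{dt}$ and $P_n^{\alpha,\beta}=\frac{(\alpha+1)_n}{n!}\,{}_2F_1(-n,n+\alpha+\beta+1;\alpha+1;t)$; Gauss's relation $t\frac{d}{dt}\,{}_2F_1(a,b;c;t)=a\big[{}_2F_1(a+1,b;c;t)-{}_2F_1(a,b;c;t)\big]$ with $a=-n$ then reduces the claim to one hypergeometric summation to be matched against $\sum_j(-1)^{j+1}(\cdots)\varepsilon_n^{(j)}$. Either way, the main obstacle is precisely this matching of the off-diagonal connection coefficients: verifying that the alternating Pochhammer ratios $(-1)^{j+1}\varepsilon_n^{(j)}$, weighted by $2(n-j)+\alpha+\beta+1$, are exactly what the iterated contiguous relations (or the hypergeometric sum) produce. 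The case $n=1$, where $(1-x)\frac{d}{dx}P_1^{\alpha,\beta}=\frac{\alpha+\beta+2}{2}(1-x)$ is reproduced by $-P_1^{\alpha,\beta}+(\alpha+\beta+1)\frac{1+\alpha}{1+\alpha+\beta}$, already fixes all the constants and serves as a sanity check.

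Finally, for $\D_2$ I would avoid repeating the computation and instead invoke the reflection $x\mapsto -x$ together with the swap $\alpha\leftrightarrow\beta$, using $P_n^{\alpha,\beta}(x)=(-1)^nP_n^{\beta,\alpha}(-x)$. This sends $\frac{\alpha+\beta+1}{2}I+(1+x)\frac{d}{dx}$ to $-\big(-\frac{\alpha+\beta+1}{2}I+(1-x)\frac{d}{dx}\big)$ and carries the data $(\varepsilon_{n,2},\sigma_{n,2})$ for $(\alpha,\beta)$ into $(\varepsilon_{n,1},\sigma_{n,1})$ for $(\beta,\alpha)$ up to the sign changes $\varepsilon\mapsto-\varepsilon$ and $\sigma\mapsto-\sigma$. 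The flip $\sigma\mapsto-\sigma$ only flips the overall sign of $\D$, matching the sign flip of the differential operator, while $\varepsilon\mapsto-\varepsilon$ is exactly absorbed by the invariance of $\D$-operators under the normalization $p_n\mapsto(-1)^np_n$ (the remark at the end of Section~3, taking $a_n=(-1)^n$). Thus the already-proved statement for $\D_1$ at parameters $(\beta,\alpha)$ delivers the statement for $\D_2$ at $(\alpha,\beta)$ with no further work.
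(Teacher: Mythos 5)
Your reduction is correct and is in the spirit of what the paper intends: the paper omits the proof of this lemma, remarking only that it is ``essentially the same'' as the earlier ones, and those earlier proofs (Lemmas \ref{lTch}, \ref{lTme}, \ref{lTlag}) all proceed exactly as you do — unwind the abstract definition of $\D$ on the classical basis and identify the resulting expansion with a first order operator via structural formulas for the family. Your unwinding of (\ref{defTo2}) is right ($\sigma_{n+1,1}=2n+\alpha+\beta+1$, $\sigma_{n+1-j,1}=2(n-j)+\alpha+\beta+1$, and the telescoped product $\varepsilon_n^{(j)}$), the reduction to the single structure relation for $(1-x)\frac{d}{dx}P_n^{\alpha,\beta}$ is right, the $n=1$ check is right, and your treatment of $\D_2$ by the reflection $P_n^{\alpha,\beta}(x)=(-1)^nP_n^{\beta,\alpha}(-x)$ combined with the sign flips $\sigma\mapsto-\sigma$ (which negates the whole operator) and $\varepsilon\mapsto-\varepsilon$ (absorbed by the normalization remark with $a_n=(-1)^n$) is complete and correct.

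The one genuine gap is that you stop at the decisive step: you name the matching of the off-diagonal coefficients as ``the main obstacle'' but do not carry it out, and that matching \emph{is} the content of the lemma. It does close, and cleanly, along the route you propose. Iterating the contiguous relation
$(2m+\alpha+\beta+1)P_m^{(\alpha,\beta)}=(m+\alpha+\beta+1)P_m^{(\alpha,\beta+1)}+(m+\alpha)P_{m-1}^{(\alpha,\beta+1)}$
gives the connection coefficients
\begin{equation*}
P_m^{(\alpha,\beta+1)}=\sum_{j=0}^m(-1)^j\,\frac{(\alpha+m-j+1)_j}{(\alpha+\beta+m-j+2)_j}\cdot\frac{2(m-j)+\alpha+\beta+1}{m-j+\alpha+\beta+1}\,P_{m-j}^{(\alpha,\beta)},
\end{equation*}
and substituting this (for $m=n$ and $m=n-1$) into your expression
$\frac{n+\alpha+\beta+1}{2n+\alpha+\beta+1}\bigl[(n+\alpha)P_{n-1}^{\alpha,\beta+1}-nP_n^{\alpha,\beta+1}\bigr]$,
the coefficient of $P_{n-j}^{\alpha,\beta}$ collapses, after the two Pochhammer simplifications $(n+\alpha)(\alpha+n-j+1)_{j-1}=(\alpha+n-j+1)_j$ and $(\alpha+\beta+n-j+1)(\alpha+\beta+n-j+2)_{j-1}=(\alpha+\beta+n-j+1)_j$ together with $1+\tfrac{n}{\alpha+\beta+n+1}=\tfrac{2n+\alpha+\beta+1}{\alpha+\beta+n+1}$, to exactly $(-1)^{j+1}\bigl(2(n-j)+\alpha+\beta+1\bigr)\varepsilon_n^{(j)}$, with the diagonal term giving $-n$. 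With that computation written out, your argument is a complete proof.
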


To establish the main results in this Section, we need some notation.

We write $(\sigma_n)_n$ and $(\theta_n)_n$ for the sequences
\begin{align*}\nonumber
\theta_n&=-n(n+\alpha +\beta+1), \\
\sigma_n&=2n+\alpha +\beta-1.
\end{align*}
We recall that $(\theta_n)_n$ is just the  eigenvalue sequence for the Jacobi polynomials $P_{n}^{\alpha,\beta}$ (\ref{defjac}).
For $j\ge 0$, we also consider the sequence
$$
u_j(n)=(n+\alpha )_j(n+\beta -j)_j,
$$
and the polynomials $r_0=1$ and for $j\ge 1$
$$
r_j(x)=\prod _{i=0}^{j-1}[-x+(\alpha+i+1)(\beta -i)].
$$

Since the polynomial $r_j$ has degree just $j$, all polynomial
$P\in  \PP$ can be written as $P(x)=\sum_{j=0}^{\deg(P)}w_jr_j(x)$ for certain numbers $w_j$, $j=0,\cdots , \deg(P)$.

Lemma \ref{fl2v} and the two $\D$-operators for the Jacobi polynomials
found in Lemma \ref{lTjac} automatically produce  a large class of
polynomials satisfying higher order differential equations.
We only proceed with the first $\D$-operator because due to
the symmetry of the Jacobi polynomials with respect to the parameters $\alpha$ and $\beta$ and the endpoints of its support $x=1$ and $x=-1$
the examples generated by both $\D$-operators are essentially the same.

\begin{theorem}\label{th5.1jac}
Let $P_2$ be an arbitrary polynomial of degree
 $k\ge 1$ which we write in the form
$$
P_2(x)=\sum_{j=0}^{k}w_jr_j(x),
$$
for certain numbers $w_j$, $j=0,\cdots , k$. Consider also the polynomial $P_1$ of degree $k+1$ defined by
$$
P_1(x)=\sum _{j=0}^k w_j\left( \frac{2}{j+1}r_{j+1}(x)+(\alpha-\beta+2j+1)r_j(x)\right).
$$
Consider next the sequences of numbers $(\gamma_n)_n$, $(\beta_n)_n$ and $(\lambda_n)_n$ defined by
\begin{align*}\nonumber
\gamma_n&=\sum _{j=0}^k w_ju_j(n),\quad n\ge 0,\\
\beta_n&=\frac{n+\alpha}{n+\alpha+\beta}\frac{\gamma_{n+1}}{\gamma_n},\quad n\ge 1,\\
 \lambda_n&=\frac{\sigma _n\gamma_n+P_1(\theta_{n-1})}{2}, \quad n\ge 1,\quad \lambda_0=\frac{P_1(0)-(\alpha+\beta+1)P_2(0)}{2}.
\end{align*}
where we implicitly assume that $\gamma_n\not =0$, $n\ge 0$. Define finally the sequence of polynomials $q_0=1$, and for $n\ge 1$
\begin{equation}\label{qnnojac}
q_n=P_{n}^{\alpha,\beta}+\beta_nP_{n-1}^{\alpha,\beta},
\end{equation}
where $P_{n}^{\alpha,\beta}$ is the Jacobi polynomial (\ref{defjac}) ($\alpha+\beta \not =-1, -2,\cdots$).
Write finally $D$ for the differential operator of order $2k+2$
$$
D=\frac{1}{2}P_1(D_{\alpha,\beta})+\left(-\frac{\alpha+\beta+1}{2}+(1-x)\frac{d}{dx}\right) P_2(D_{\alpha,\beta}),
$$
where $D_{\alpha,\beta}$  is the second order differential operator  with respect to which
the polynomials $(P_{n}^{\alpha,\beta})_n$ are eigenfunctions (see (\ref{sodojac})).

Then $D(q_n)=\lambda_nq_n$, $n\ge 0$.
\end{theorem}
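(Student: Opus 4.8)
The plan is to recognize this statement as the exact Jacobi analogue of Theorem \ref{th5.1ha} and deduce it from Lemma \ref{fl2v} applied to the type-2 $\D$-operator $\D_1$ of Lemma \ref{lTjac}. Thus one sets $p_n=P_n^{\alpha,\beta}$, $D_p=D_{\alpha,\beta}$, $\theta_n=-n(n+\alpha+\beta+1)$, $\varepsilon_n=\frac{n+\alpha}{n+\alpha+\beta}$, $\sigma_n=2n+\alpha+\beta-1$, and $\D_1=-\frac{\alpha+\beta+1}{2}I+(1-x)\frac{d}{dx}\in\A$. With these identifications the operator produced by \eqref{defD2} is precisely the $D$ in the statement, and \eqref{defbetng2} becomes $\beta_n=\frac{n+\alpha}{n+\alpha+\beta}\frac{\gamma_{n+1}}{\gamma_n}$, as required. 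Everything then reduces to verifying the three hypotheses of Lemma \ref{fl2v}: (1) $\gamma_{n+1}=P_2(\theta_n)$, (2) $\lambda_n-\lambda_{n-1}=\sigma_n\gamma_n$, and (3) $\lambda_{n+1}+\lambda_n=P_1(\theta_n)$.

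The crux is a technical lemma playing the role of Lemma \ref{ulll}. First I would prove the duality identity
\[
u_j(n)=r_j(\theta_{n-1}),\qquad j,n\ge 0,
\]
by re-indexing: writing $u_j(n)=\prod_{i=0}^{j-1}(n+\alpha+i)(n+\beta-1-i)$ and expanding each factor, one checks $(n+\alpha+i)(n+\beta-1-i)=-\theta_{n-1}+(\alpha+i+1)(\beta-i)$, which is exactly the $i$-th factor of $r_j(\theta_{n-1})$. Granting this, hypothesis (1) is immediate, since $\gamma_{n+1}=\sum_j w_j u_j(n+1)=\sum_j w_j r_j(\theta_n)=P_2(\theta_n)$. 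The second ingredient is the summation identity
\[
\sigma_n u_j(n)+\sigma_{n+1}u_j(n+1)=\frac{2}{j+1}\bigl(u_{j+1}(n+1)-u_{j+1}(n)\bigr)+(\alpha-\beta+2j+1)\bigl(u_j(n+1)-u_j(n)\bigr),
\]
the analogue of \eqref{l722}, which is a polynomial identity in $n$ verified by direct computation (the case $j=0$ already fixes the coefficient $\alpha-\beta+2j+1$).

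With these two identities in hand, hypotheses (2) and (3) follow exactly as in the proof of Theorem \ref{th5.1ha}. Adding and subtracting the relations in (2) and (3) shows that, for $n\ge 2$, both are equivalent to the single identity $\tfrac12\bigl(P_1(\theta_n)-P_1(\theta_{n-1})\bigr)=\tfrac12\bigl(\sigma_n\gamma_n+\sigma_{n+1}\gamma_{n+1}\bigr)$; substituting $u_j(n+1)=r_j(\theta_n)$ and $u_j(n)=r_j(\theta_{n-1})$ into the summation identity and summing against the coefficients $w_j$ collapses the right-hand side into $P_1(\theta_n)-P_1(\theta_{n-1})$ by the defining formula for $P_1$. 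The boundary cases $n=1$, together with the stated value $\lambda_0=\frac{P_1(\theta_0)-\sigma_1P_2(\theta_0)}{2}$ (note $\theta_0=0$ and $\sigma_1=\alpha+\beta+1$), are checked directly, each reducing once more to $\gamma_1=P_2(\theta_0)$. Lemma \ref{fl2v} then yields $D(q_n)=\lambda_nq_n$ with $D\in\A$, and a count of leading coefficients (as in Theorem \ref{th5.1ch}) confirms that $D$ has order $2k+2$. The only genuinely computational step, and hence the main obstacle, is establishing the summation identity for all $j$; everything else is bookkeeping that transfers verbatim from the Hahn case.
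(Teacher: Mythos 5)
Your proposal is correct and follows essentially the same route the paper intends: the paper omits the proof of Theorem \ref{th5.1jac} precisely because it is the Hahn argument transplanted, i.e.\ Lemma \ref{fl2v} applied to the operator $\D_1$ of Lemma \ref{lTjac}, with the Jacobi analogues $u_j(n)=r_j(\theta_{n-1})$ and the summation identity playing the role of Lemma \ref{ulll}, and your verifications of these identities are accurate. (Only your parenthetical remark that the case $j=0$ fixes the coefficient $\alpha-\beta+2j+1$ is off, since $u_0(n+1)-u_0(n)=0$ there, but this does not affect the argument.)
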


The case $k=1$ in the previous Theorem is included in \cite{GrH3}, and for general $k$ is implicit in \cite{Plamen1}.

We next consider some important particular cases of the previous Theorem.

For $\alpha,\beta,\alpha+\beta \not =-1,-2, \cdots $ and $M\not =-\Gamma(n)\Gamma(n+\alpha)/(\Gamma(n+\alpha+\beta)\Gamma(n+\beta))$, consider
the measure $\tilde \rho_{\alpha,\beta,M}$ defined by
\begin{equation}\label{darjac}
\tilde \rho_{\alpha,\beta,M}=2^{\alpha+\beta}\beta \Gamma^2(\beta )M\delta _{-1}+\mu_{\alpha,\beta-1}(x)dx,
\end{equation}
where $\mu_{\alpha,\beta}(x)$ is the orthogonalizing function for the Jacobi polynomials $(P_{n}^{\alpha,\beta})_n$.
When $\alpha>-1, \beta>0$ and $M>0$, the measure $\tilde \rho _{\alpha,\beta, M}$ is the
Krall-Jacobi-Koornwinder measure
$$
2^{\alpha+\beta}\beta \Gamma^2(\beta )M\delta _{-1}+(1-x)^\alpha (1+x)^{\beta-1}, \quad -1<x<1.
$$
To simplify the notation we remove the dependence of $\alpha$, $\beta$ and $M$ and write $\tilde \rho =\tilde \rho_{\alpha,\beta, M}$.

Using Lemma \ref{addel} we find explicitly a sequence of orthogonal polynomials with respect to $\tilde  \rho$ in terms of $p_n=P_{n}^{\alpha,\beta}$, $n\ge 0$.

\begin{lemma}
Let $(\gamma _{n})_{n\ge 1}$ be the sequence of numbers defined by
\begin{equation}\label{defdeljac}
\gamma_{n}=1+M\frac{\Gamma(n+\alpha+\beta)}{\Gamma(n+\alpha)}\frac{\Gamma(n+\beta)}{\Gamma(n)}.
\end{equation}
Write
\begin{equation*}
\beta_n=\frac{n+\alpha}{n+\alpha+\beta}\frac{\gamma_{n+1}}{\gamma_{n}},\quad n\ge 1.
\end{equation*}
Then the polynomials $q_0=1$ and
\begin{equation}\label{defqnjac}
q_{n,}=P_{n}^{\alpha,\beta }+\beta _nP_{n-1}^{\alpha,\beta }, \quad n\ge 1,
\end{equation}
are orthogonal with respect to $\tilde \rho$ (\ref{darjac}).
\end{lemma}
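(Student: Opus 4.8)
The plan is to obtain the result as a direct application of Lemma \ref{addel}, exactly mirroring the Laguerre computation in Lemma \ref{l4.1}. First I would identify the ingredients of Lemma \ref{addel}: take $\lambda=-1$, let $\nu=\mu_{\alpha,\beta-1}(x)\,dx$ be the continuous part of $\tilde\rho$ in (\ref{darjac}), and set $\mu=(x-\lambda)\nu=(1+x)\mu_{\alpha,\beta-1}(x)\,dx=\mu_{\alpha,\beta}(x)\,dx$ by (\ref{jacw}). Thus $\mu$ is the Jacobi measure and $p_n=P_n^{\alpha,\beta}$. The mass of the Dirac delta in (\ref{darjac}) plays the role of the constant $M$ of Lemma \ref{addel}; to avoid a clash of notation I would write $\widetilde M=2^{\alpha+\beta}\beta\Gamma^2(\beta)M$ for it. With these identifications, (\ref{hk2}) reads $\beta_n=-(\alpha_n+\widetilde M p_n(-1))/(\alpha_{n-1}+\widetilde M p_{n-1}(-1))$, so it remains to evaluate $\alpha_n=\int P_n^{\alpha,\beta}\,d\nu$ and the endpoint value $p_n(-1)$, and then to check that the ratio collapses to the stated $\beta_n$.

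The endpoint value is immediate from (\ref{defjac}): putting $x=-1$ annihilates every term except $j=0$, giving $p_n(-1)=(-1)^n\binom{n+\beta}{n}$.

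The core computation is $\alpha_n$. I would substitute (\ref{defjac}) into $\alpha_n=\int_{-1}^1 P_n^{\alpha,\beta}(x)(1-x)^\alpha(1+x)^{\beta-1}\,dx$, rewrite $(x-1)^{n-j}=(-1)^{n-j}(1-x)^{n-j}$, and evaluate each term with the Beta integral $\int_{-1}^1(1-x)^a(1+x)^b\,dx=2^{a+b+1}\Gamma(a+1)\Gamma(b+1)/\Gamma(a+b+2)$. The factor $\Gamma(n-j+\alpha+1)$ cancels against the one coming from $\binom{n+\alpha}{j}$, and the remaining binomial sum telescopes through the elementary identity $\sum_{j=0}^n(-1)^j\binom{n}{j}/(\beta+j)=n!\,\Gamma(\beta)/\Gamma(\beta+n+1)$ (itself a consequence of $\int_0^1 t^{\beta-1}(1-t)^n\,dt$), producing the clean closed form
$$
\alpha_n=(-1)^n2^{\alpha+\beta}\Gamma(\beta)\frac{\Gamma(n+\alpha+1)}{\Gamma(n+\alpha+\beta+1)}.
$$

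Finally I would combine the two pieces. Using $\beta\Gamma(\beta)=\Gamma(\beta+1)$, a short rearrangement gives
$$
\alpha_n+\widetilde M p_n(-1)=(-1)^n2^{\alpha+\beta}\Gamma(\beta)\frac{\Gamma(n+\alpha+1)}{\Gamma(n+\alpha+\beta+1)}\,\gamma_{n+1},
$$
where the bracketed factor is precisely $\gamma_{n+1}$ as defined in (\ref{defdeljac}). Substituting into (\ref{hk2}), the constants $2^{\alpha+\beta}\Gamma(\beta)$ cancel, the signs $(-1)^n/(-1)^{n-1}$ absorb the outer minus, and the Gamma ratio simplifies via $\Gamma(n+\alpha+1)/\Gamma(n+\alpha)=n+\alpha$ and $\Gamma(n+\alpha+\beta)/\Gamma(n+\alpha+\beta+1)=1/(n+\alpha+\beta)$ to yield exactly $\beta_n=\frac{n+\alpha}{n+\alpha+\beta}\frac{\gamma_{n+1}}{\gamma_n}$; Lemma \ref{addel} then delivers the orthogonality with respect to $\tilde\rho$. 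I expect the evaluation of $\alpha_n$ to be the only genuine obstacle: one must pick the Beta-integral route (or, equivalently, an explicit $P^{\alpha,\beta}$-to-$P^{\alpha,\beta-1}$ connection formula) and carry out the Gamma-function bookkeeping carefully enough that the binomial sum telescopes into the stated closed form.
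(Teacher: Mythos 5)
Your proposal is correct and follows exactly the route the paper intends: the paper omits this proof as being "essentially the same" as the Laguerre case (Lemma \ref{l4.1}), i.e.\ a direct application of Lemma \ref{addel} with $\lambda=-1$, $\nu=\mu_{\alpha,\beta-1}\,dx$, $\mu=\mu_{\alpha,\beta}\,dx$, which is precisely what you do, and your values $p_n(-1)=(-1)^n\binom{n+\beta}{n}$ and $\alpha_n=(-1)^n2^{\alpha+\beta}\Gamma(\beta)\Gamma(n+\alpha+1)/\Gamma(n+\alpha+\beta+1)$ check out, yielding the stated $\beta_n$. The only cosmetic difference is that you evaluate $\alpha_n$ by a direct Beta-integral computation rather than by the connection formula analogous to $L_n^{\alpha+1}=\sum_{j=0}^nL_{n-j}^{\alpha}$ used in the Laguerre case; both give the same closed form.
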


The formula (\ref{defqnlagnn}) is the particular case $k=1$ of formula (3.10) in \cite{Plamen1}.

\bigskip

When $\beta$ is a positive integer, the sequence $(\gamma_n)_n$ (\ref{defdeljac}) is a polynomial
in $n$, and then Theorem \ref{th5.1lag} gives the  $(2\alpha +2)$-th order differential equation
that the Krall-Jacobi-Koornwinder polynomials (\ref{defqnjac}) satisfy.

\begin{theorem} Let $(q_{n})_n$ be the sequence of polynomials (\ref{defqnjac}) orthogonal with respect to the measure $\tilde \rho$ (\ref{darjac}).
Assume that $\beta$ is a positive integer.
Define the sequence of numbers $\lambda _n$ by
\begin{align} \label{eigjack}
\lambda _n&=(n+\alpha+\beta)\left(n+M(n+\alpha)_\beta(n)_\beta\left(1+\frac{n-1}{\beta +1}\right)\right)+\alpha \beta,
\end{align}
and the polynomials $P_1$ and $P_2$, of degrees $\beta+1$ and $\beta$, respectively, by
\begin{align*}
P_1(x)&=\alpha \beta+(\alpha +1)(\beta+1)-2x+M\left(-\frac{2x}{\beta +1}+\alpha+\beta +1\right) \\ \nonumber
&\qquad \qquad \quad \times \prod_{i=0}^{\beta-1}[-x+(\alpha+i+1)(\beta-i)],\\
P_2(x)&=1+M\prod_{i=0}^{\beta-1}[-x+(\alpha+i+1)(\beta-i)].
\end{align*}
Consider the differential operator of order $2\beta +2$ defined by
$$
D=\frac{1}{2}P_1(D_{\alpha,\beta })+\left(-\frac{\alpha+\beta+1}{2}+(1-x)\frac{d}{dx} \right)P_2(D_{\alpha,\beta}),
$$
where $D_{\alpha,\beta }$ is the second order differential operator defined by (\ref{sodojac}). Then
$$
D(q_{n})=\lambda_nq_{n}.
$$
\end{theorem}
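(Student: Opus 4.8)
The plan is to recognize this Theorem as the particular case of Theorem~\ref{th5.1jac} corresponding to the choice $w_0=1$, $w_\beta=M$, and $w_j=0$ for $1\le j\le\beta-1$, with $k=\beta$. First I would observe that, with $r_j$ as defined before Theorem~\ref{th5.1jac}, the product $r_\beta$ runs over $i=0,\dots,\beta-1$, so that $r_\beta(x)=\prod_{i=0}^{\beta-1}[-x+(\alpha+i+1)(\beta-i)]$; hence $P_2(x)=1+Mr_\beta(x)=r_0(x)+Mr_\beta(x)$, which is exactly $\sum_{j=0}^\beta w_jr_j(x)$ for the stated $w_j$. Feeding these $w_j$ into $\gamma_n=\sum_{j=0}^\beta w_ju_j(n)$ and using $u_\beta(n)=(n+\alpha)_\beta(n+\beta-\beta)_\beta=(n+\alpha)_\beta(n)_\beta$ gives $\gamma_n=1+M(n+\alpha)_\beta(n)_\beta$. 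Since $\beta$ is a positive integer, $\Gamma(n+\alpha+\beta)/\Gamma(n+\alpha)=(n+\alpha)_\beta$ and $\Gamma(n+\beta)/\Gamma(n)=(n)_\beta$, so this coincides with the sequence $\gamma_n$ of the preceding Lemma; therefore the $\beta_n$, and hence the polynomials $q_n$, are identical to those produced by Theorem~\ref{th5.1jac} for this $P_2$, and the orthogonality with respect to $\tilde\rho$ in~(\ref{darjac}) is exactly that of the preceding Lemma.

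Next I would check that the explicit $P_1$ in the statement is the one produced by Theorem~\ref{th5.1jac}. The key identity is $r_{\beta+1}(x)=-x\,r_\beta(x)$, coming from the extra factor $[-x+(\alpha+\beta+1)(\beta-\beta)]=-x$. Plugging $w_0=1$, $w_\beta=M$ into $P_1(x)=\sum_{j=0}^\beta w_j\bigl(\tfrac{2}{j+1}r_{j+1}(x)+(\alpha-\beta+2j+1)r_j(x)\bigr)$ yields $P_1(x)=2r_1(x)+(\alpha-\beta+1)+M\bigl(\tfrac{2}{\beta+1}r_{\beta+1}(x)+(\alpha+\beta+1)r_\beta(x)\bigr)$. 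Using $r_1(x)=-x+(\alpha+1)\beta$, the $M$-free part becomes $-2x+2\alpha\beta+\alpha+\beta+1=\alpha\beta+(\alpha+1)(\beta+1)-2x$, and substituting $r_{\beta+1}=-xr_\beta$ turns the $M$-part into $\bigl(-\tfrac{2x}{\beta+1}+\alpha+\beta+1\bigr)r_\beta(x)$; this is precisely the stated $P_1$.

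Finally I would verify the eigenvalue formula by computing $\lambda_n=\tfrac12\bigl(\sigma_n\gamma_n+P_1(\theta_{n-1})\bigr)$ for $n\ge1$, using $\theta_{n-1}=-(n-1)(n+\alpha+\beta)$ together with the Jacobi analogue of Lemma~\ref{ulll}, namely $r_\beta(\theta_{n-1})=u_\beta(n)=(n+\alpha)_\beta(n)_\beta$. Separating $M$-free and $M$-terms, the $M$-free part collapses via $2n+\alpha+\beta-1+2\alpha\beta+\alpha+\beta+1+2(n-1)(n+\alpha+\beta)=2\bigl(n(n+\alpha+\beta)+\alpha\beta\bigr)$, giving $n(n+\alpha+\beta)+\alpha\beta$; while the $M$-term factors as $M(n+\alpha)_\beta(n)_\beta(n+\alpha+\beta)\bigl(1+\tfrac{n-1}{\beta+1}\bigr)$ once one combines $2n+2\alpha+2\beta$ from $\sigma_n$ and $(\alpha+\beta+1)$ with $\tfrac{2(n-1)(n+\alpha+\beta)}{\beta+1}$ from $P_1(\theta_{n-1})$. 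Together these reproduce the claimed $\lambda_n$; the case $n=0$ is handled by the separate formula $\lambda_0=\tfrac12\bigl(P_1(0)-(\alpha+\beta+1)P_2(0)\bigr)$, where the $M$-terms cancel and, since $(0)_\beta=0$, the target expression also collapses to $\alpha\beta$. The only genuinely computational step, and hence the main obstacle, is this last bookkeeping for $\lambda_n$, which is nevertheless routine once the relation $r_\beta(\theta_{n-1})=(n+\alpha)_\beta(n)_\beta$ is in hand.
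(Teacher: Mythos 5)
Your proposal is correct and follows exactly the route the paper intends: the paper omits the proof as "essentially the same as in the previous Sections," i.e., one recognizes the statement as the particular case of Theorem~\ref{th5.1jac} obtained from $P_2=r_0+Mr_\beta$ (so $k=\beta$, $w_0=1$, $w_\beta=M$, $w_j=0$ otherwise), matches $\gamma_n$ with the one in the preceding Lemma via $u_\beta(n)=(n+\alpha)_\beta(n)_\beta$, and verifies the displayed $P_1$ and $\lambda_n$ by the identities $r_{\beta+1}(x)=-x\,r_\beta(x)$ and $r_j(\theta_{n-1})=u_j(n)$. All of your computations, including the $n=0$ case, check out.
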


Comparing the eigenvalues  $(\lambda_n)_n$ (\ref{eigjack}) with that given by Zhedanov in \cite{Zh}, (8.14),
we can conclude that the differential operator found by Zhedanov for the Krall-Jacobi-Koornwinder polynomials orthogonal with respect to (\ref{jacobid}), $N=0$,
coincides (up to normalization) with the $(2\beta+2)$-th order differential operator displayed in the previous theorem.

\bigskip
When $\beta$ is not equal to $k$, the polynomials $(q_n)_n$ (\ref{qnnojac}) in Theorem \ref{th5.1jac} seem not to be orthogonal with respect to a measure. Proceeding
in the same way as in the previous Section (see Lemma \ref{occ}) one can see that they enjoy similar orthogonality properties
with respect to the nonsymmetric bilinear form
$$
\langle f,g\rangle=\int_{-1} ^1 f(x)g(x)\mu _{\alpha,\beta -1}(x)dx+g(-1)\int_{-1} ^1 f(x)Q(x)\mu _{\alpha,\beta -k-1}(x)dx,
$$
where $\mu_{\alpha,\beta}$ is the orthogonalizing measure for the Jacobi polynomials $(P_{n}^{\alpha,\beta })_n$,
and $Q$ is certain polynomial of degree at most $k-1$ or $k$ depending on $P_2$.
These orthogonality properties for the polynomials $(q_n)_n$ imply that they satisfy a $(2k+3)$-term recurrence relation of the form
$$
(1+x)^{k+1}q_n(x)=\sum_{j=-k-1}^{k+1}c_{n,j}q_{n+j}(x).
$$

     \end{document}